\documentclass[11pt]{amsart}
\usepackage{amssymb,amsmath,amsfonts,amscd,euscript}

\newcommand{\nc}{\newcommand}

\numberwithin{equation}{section}
\newtheorem{thm}{Theorem}[section]
\newtheorem{prop}[thm]{Proposition}
\newtheorem{lem}[thm]{Lemma}
\newtheorem{cor}[thm]{Corollary}
\theoremstyle{remark}
\newtheorem{rem}[thm]{Remark}

\newtheorem{example}[thm]{Example}
\newtheorem{dfn}[thm]{Definition}

\nc{\gl}{\mathfrak{gl}}
\nc{\GL}{\mathfrak{GL}}
\nc{\g}{\mathfrak{g}}
\nc{\gh}{\widehat\g}
\nc{\h}{\mathfrak{h}}
\nc{\la}{\lambda}
\nc{\al}{\alpha }
\nc{\be}{\beta }
\nc{\ve}{\varepsilon }
\nc{\om}{\omega }

\nc{\ta}{\theta}
\nc{\veps}{\varepsilon}
\nc{\ch}{{\mathop {\rm ch}}}
\nc{\Tr}{{\mathop {\rm Tr}\,}}
\nc{\Id}{{\mathop {\rm Id}}}
\nc{\ad}{{\mathop {\rm ad}}}
\nc{\bra}{\langle}
\nc{\ket}{\rangle}
\nc{\x}{{\bf x}}
\nc{\bs}{{\bf s}}
\nc{\bp}{{\bf p}}
\nc{\bc}{{\bf c}}
\nc{\pa}{\partial}
\nc{\ld}{\ldots}
\nc{\cd}{\cdots}
\nc{\hk}{\hookrightarrow}
\nc{\T}{\otimes}
\newcommand{\bea}{\begin{equation}}
\newcommand{\ena}{\end{equation}}
\nc{\gr}{\mathrm{gr}}
\nc{\ov}{\overline}

\nc{\cO}{\mathcal O}
\nc{\msl}{\mathfrak{sl}}
\nc{\mgl}{\mathfrak{gl}}
\nc{\U}{\mathrm U}
\nc{\V}{\EuScript V}
\nc{\bH}{\EuScript H}
\nc{\Res}{\mathrm{Res\ }}

\newcommand{\bC}{{\mathbb C}}
\newcommand{\bZ}{{\mathbb Z}}

\newcommand{\bP}{{\mathbb P}}
\newcommand{\bG}{{\mathbb G}}

\newcommand{\fb}{{\mathfrak b}}

\newcommand{\fn}{{\mathfrak n}}

\newcommand{\Fl}{\EuScript{F}}

\begin{document}

\title[$\bG_a^M$ degeneration of flag varieties]
{$\bG_a^M$ degeneration of flag varieties}

\author{Evgeny Feigin}
\address{Evgeny Feigin:\newline
Tamm Theory Division,
Lebedev Physics Institute,\newline
Leninisky prospect, 53,
119991, Moscow, Russia,\newline
{\it and }\newline
French-Russian Poncelet Laboratory, Independent University of Moscow
}
\email{evgfeig@gmail.com}

\begin{abstract}
Let $\Fl_\la$ be a generalized flag variety of a simple Lie group $G$ embedded into 
the projectivization of an irreducible $G$-module $V_\la$. We define a flat 
degeneration 
$\Fl_\la^a$, which is a $\bG^M_a$ variety.
Moreover, there exists a larger group $G^a$ acting on $\Fl_\la^a$, which is a degeneration 
of the group $G$. The group $G^a$ contains $\bG^M_a$ as a normal subgroup.
If $G$ is of type $A$, then the degenerate flag
varieties can be embedded into the product of Grassmanians and thus to the product 
of projective spaces. The defining ideal
of $\Fl^a_\la$ is generated by the set  of degenerate Pl\" ucker relations.
We prove that the coordinate ring of $\Fl_\la^a$ is isomorphic to a direct 
sum of dual PBW-graded $\g$-modules.
We also prove that there exist bases in multi-homogeneous components of the coordinate 
rings, parametrized by the semistandard PBW-tableux, which are analogues of 
semistandard tableux.
\end{abstract}

\maketitle

\section*{Introduction}
Let $G$ be a simple Lie group with the Lie algebra $\g$. The flag varieties 
associated with $G$ are the quotients $G/P$ by the parabolic subgroups
(see e.g.\cite{Kum1}). These varieties can be realized as  $G$-orbits 
$G [v_\la]\hk\bP(V_\la)$ in the projectivization of irreducible
$G$-modules $V_\la$ with highest weight vector $v_\la$, $[v_\la]=\bC v_\la$. 
We denote $\Fl_\la=G[v_\la]$.  

In this paper we introduce a new family of varieties $\Fl^a_\la$, which 
are flat degenerations of $\Fl_\la$ (the superscript $a$ is for abelian,
see the explanations below). 
We note that in the literature there exists  a degeneration of flag varieties into 
toric varieties (see 
\cite{C}, \cite{GL}, \cite{L}). Our degenerate flags are $\bG^M_a$ varieties, i.e.
they  are equipped with an 
action of the $M$-fold product $\bG^M_a$ of the additive group  of the field
with an open orbit, where $M$ is the dimension of a maximal unipotent subgroup of $G$.

The varieties $\Fl_\la^a$ are defined as follows. 
Let $F_s$, $s\ge 0$ be the PBW filtration on $V_\la$ (see \cite{Fe1}, \cite{Fe2}, \cite{FFoL},
\cite{Kum2}) 
\[
F_s=\mathrm{span}\{x_1\dots x_l v_\la:\ x_i\in\g, l\le s \}.
\]   
We define $V_\la^a=F_0\oplus_{s\ge 0} F_{s+1}/F_s$. 
Let $\g=\fn\oplus\h\oplus \fn^-$ be the Cartan decomposition. 
The space $V_\la^a$ has a natural structure of a module over the degenerate algebra
$\g^a$. The algebra $\g^a$ is isomorphic to $\g$ as a vector space. It is
a direct sum of two subalgebras: one is the Borel subalgebra
$\fb=\fn\oplus\h$ and the second is an abelian ideal $(\fn^-)^a$, which is
an abelian subalgebra isomorphic to $\fn^-$ as a vector space.
The corresponding Lie group $G^a$ is a semidirect product
$\bG_a^M\rtimes B$ of the normal
subgroup $\bG_a^M$ ($M=\dim\fn$) and the Borel subgroup $B$. 
We define the varieties
$\Fl_\la^a$ as the closure of the $\bG^M_a$-orbit of the highest weight vector:
\[
\Fl_\la=\overline{\bG^M [v_\la]}\hk \bP(V_\la^a).
\]   
We note that an affine space $\bG^M_a [v_\la]$ does not coincide with the closure
$\overline{\bG_a^M [v_\la]}$, but form a dense  open $\bG^M_a$-orbit.  
Hence for all $\la$ the varieties $\Fl_\la$ are the so-called $\bG_a^M$ varieties
\cite{HT}, \cite{AS}, \cite{A}. 

We study the varieties $\Fl^a_\la$ in the case $\g=\msl_n$. 
In this case the varieties $\Fl_\la$ are isomorphic to partial flag varieties.
In particular, the ones corresponding to fundamental weights are 
Grassmanians $Gr(d,n)$. There exist embeddings of partial flags into the product
of projective spaces and the image is given by Pl\"ucker relations. These 
relations describe coordinate rings of flag varieties (see \cite{Fu}).
Since all fundamental weights are cominuscule in the $\msl_n$ case,
we have $\Fl^a_{\om_d}\simeq \Fl_{\om_d}\simeq Gr(d,n)$.
The results of \cite{FFoL} imply that each degenerate flag variety can be
embedded into the product of Grassmanians and thus into the product of projective spaces.
We show that this embedding can be described in terms of the explicit set of 
multi-homogeneous algebraic equations, which are obtained from the Pl\"ucker relations
by certain degeneration. We prove that the degeneration $\Fl\to \Fl^a$ is flat.  
We also show that (as in the classical case)
the coordinate ring of $\Fl^a_\la$ is isomorphic to a direct sum of certain dual
modules $(V_\mu^a)^*$.

Our main technical tool is combinatorics of the semistandard 
PBW-tableaux. Recall that the usual semistandard tableaux serve
as labeling set for  bases of the coordinate rings of flag varieties. We construct
another set (which we call the set of semistandard PBW-tableux) of Young diagrams filled 
by numbers such that these tableaux provide a basis in the coordinate 
rings of $\Fl_\la^a$ (and of $\Fl_\la$ as well). 

Finally, we note that the case $\g=\msl_n$ is special, since all fundamental weights
are cominuscule. This is important for our construction, since the operators 
from  the  nilpotent radical act as pairwise commuting operators in each
module $V_{\om_d}$ even before passing to $V^a_{\om_d}$. This gives an identification
$\Fl^a_{\om_d}\simeq \Fl_{\om_d}\simeq Gr(d,n)$ and simplifies the whole picture
(see \cite{A}).
For other algebras there exist Grassmanians, which have to be degenerated.
It is very interesting to study these degenerations as well as arbitrary 
degenerations $\Fl^a_\la$ for general $\g$.

Our paper is organized as follows:\\
\noindent In Section $1$ we recall notations and main facts about Lie algebras
and flag varieties in type $A$.\\
\noindent In Section $2$ we introduce the $\bG_a^M$ degeneration of
the generalized flag varieties for arbitrary simple Lie group $G$.\\
\noindent In Section $3$ we state our main results and provide examples.\\
\noindent In Sections $4$ and $5$ we prove the statements from Section $3$.
Section $4$ addresses the combinatorial questions and in Section $5$ we
prove algebro-geometric results.

\section{The classical case}
\subsection{Notations}
Let $\g$ be a simple Lie algebra with the Cartan decomposition
$\g=\fn\oplus\h\oplus \fn^-$ and the Borel subalgebra $\fb=\fn\oplus\h$.
Let $\om_i$, $\al_i$, $i=1,\dots,l$ be fundamental weights and simple roots of $\g$.
Here $l=\dim\h= \mathrm{rk}\ \g$. 
Let $Q$ and $P$ be root and weight lattices in $\h^*$:
$P$ is generated by $\omega_i$ and $Q$ is generated by $\al_i$. 
We set
\[
Q_\pm=\bigoplus_{i=1}^l \pm\bZ_{\ge 0} \al_i,\ P_\pm=\bigoplus_{i=1}^l \pm\bZ_{\ge 0} \omega_i,\
\bZ_{\ge 0}=\{n\in\bZ:\ n\ge 0\}.
\]
We denote by $(\cdot,\cdot)$ the Killing form on $\h^*$. In particular, we have
$(\al_i,\omega_j)=\delta_{i,j}$.

Consider the weight decomposition
\[
\fn=\bigoplus_{\al\in Q_+} \fn_\al,\ \fn^-=\bigoplus_{\al\in Q_+} \fn^-_{-\al},
\]
where $\fn_\al$ and $\fn^-_{-\al}$ are one-dimensional spaces spanned by elements
$e_\al$ and $f_\al$. One has
\[
[h,e_\al]=\al(h)e_\al, \ [h,f_\al]=-\al(h)f_\al,\ h\in\h.
\]
We denote the elements $e_{\al_i}$ by $e_i$ and $f_{\al_i}$ by $f_i$. Then
$e_\al$, $f_\al$, $\al\in Q_+$ and $h_i=[e_i,f_i]$ form the Chevalley basis of $\g$.

For any $\la=\sum_{i=1}^l m_i\omega_i\in P_+$ let $V_\la$ be irreducible highest weight
$\g$-module with highest weight $\la$. Let $v_\la\in V_\la$ be a highest weight vector.
Then one has
\[
hv_\la=\la(h)v_\la \ \forall h\in\h,\ \ \fn v_\la=0,\ \mathrm{U}(\fn^-) v_\la=V_\la.
\]

Let $G$ be a simple Lie group with the Lie algebra $\g$. Let $B$, $H$, $N$ and $N^-$
be the Borel, Cartan and unipotent subgroups of $G$. The corresponding
Lie algebras are $\fb$, $\h$, $\fn$ and $\fn^-$.
Each space $V_\la$, $\la\in P_+$ is equipped with the natural structure
of $G$-module. Therefore $G$ acts on the projectivization
$\bP(V_\la)$. The (generalized) flag variety $\Fl_\la\hk \bP(V_\la)$ is defined
as the $G$-orbit of the line $\bC v_\la$. Each variety $\Fl_\la$ is isomorphic
to  the quotient of $G$ by the parabolic subgroup leaving the point $\bC v_\la\in\bP(V_\la)$
invariant.

\subsection{Type $A$ case: Pl\"ucker relations, coordinate rings and semi\-stan\-dard tableaux}
\label{clas}
In this subsection we recall the main ingredients of the theory of 
flag varieties in the case $\g=\msl_n$. Our main reference is \cite{Fu} (see \cite{Kum1}
for more Lie-theoretic approach).

Let $1\le d_1<\dots <d_s\le n-1$ be a sequence of increasing numbers. Then for any positive
integers $a_1,\dots,a_s$ the variety $\Fl_{a_1\om_{d_1}+\dots + a_s\om_{d_s}}$ is isomorphic to
the partial flag variety
\[
\Fl(d_1,\dots,d_s)=\{V_1\hk V_2\hk\dots\hk V_s\hk \bC^n:\ \dim V_i=d_i\}.
\]
In particular, if $s=1$, then $\Fl(d)$ is the Grassmanian $Gr(d,n)$ and for $s=n-1$
$\Fl(1,\dots,n-1)$ is the variety of full flags. We recall that 
\[
V_{\om_d}=\Lambda^d(V_{\om_1})=\Lambda^d(\bC^n)
\] 
and the embedding $Gr(d,n)\hk \bP(\Lambda^d V_{\om_1})$ is defined as follows: 
a subspace with a basis $w_1,\dots,w_d$ maps to $\bC w_1\wedge\dots\wedge w_d$.
For general sequence $d_1,\dots,d_s$ one has embeddings:
\[
\Fl(d_1,\dots,d_s)\hk Gr(d_1,n)\times\dots\times Gr(d_s,n)\hk
\bP(V_{\om_{d_1}})\times\dots\times \bP(V_{\om_{d_s}}).
\]  
The composition of these embeddings is called the Pl\"ucker embedding. 
The image is described
explicitly in terms of Pl\"ucker relations. Namely, let $v_1,\dots,v_n$ be a 
basis of $\bC^n=V_{\om_1}$. Then one gets  a basis $v_J$ of $V_{\om_d}$ 
$v_J=v_{j_1}\wedge\dots\wedge v_{j_d}$  labeled by sequences 
$J=(1\le j_1<j_2<\dots <j_d\le n)$. Let $X_J\in V_{\om_d}^*$ be the dual basis.
We denote by the same symbols the coordinates of a vector $v\in V_{\om_d}$:
$X_J=X_J(v)$. 
The image of the embedding 
\[
\Fl(d_1,\dots,d_s)\hk\times_{i=1}^s \bP(V_{\om_{d_i}})
\]
is defined by the Pl\"ucker relations. These relations are labeled by a pair of 
numbers $p\ge q$, $p,q\in\{d_1,\dots,d_s\}$, by a number $k$, $1\le k\le q$ and by a pair 
of sequences $L=(l_1,\dots,l_p)$, $J=(j_1,\dots,j_q)$, $1\le l_\al,j_\beta\le n$. The
corresponding relation is denoted by $R^k_{L,J}$ and is given by
\begin{equation}\label{PR}
R^k_{L,J}=X_LX_J - \sum_{1\le r_1 <\dots <r_k\le p} X_{L'}X_{J'},
\end{equation} 
where $L'$,$J'$ are obtained from $L$, $J$ by interchanging $k$-tuples 
$(l_{r_1},\dots,l_{r_k})$ and $(j_1,\dots,j_k)$ in $L$ and $J$ respectively, i.e.
\begin{gather*}
J'=(l_{r_1},\dots,l_{r_k},j_{k+1},\dots,j_q),\\ 
L'=(l_1,\dots,l_{r_1-1},j_1,l_{r_1+1},\dots,l_{r_2-1},j_2,\dots,l_p).
\end{gather*}
We note that for any $\sigma\in S_d$ the equality
\[
X_{j_{\sigma(1)},\dots,j_{\sigma(d)}}=(-1)^\sigma X_{j_1,\dots,j_d}
\]
is assumed in \eqref{PR}. We denote the ideal generated by all $R^k_{L,J}$ by
$I(d_1,\dots,d_s)$. An important fact is that this ideal is prime.

Recall that the coordinate ring of $\Fl(d_1,\dots,d_s)$ is the quotient ring
\[
Q(d_1,\dots,d_s)=\bC[X_{j_1,\dots,j_d}]/I(d_1,\dots,d_s),
\] 
where the variables $X_{j_1,\dots,j_d}$ are labeled by 
$d=d_1,\dots,d_s$ and $1\le j_1<\dots <j_d\le n$.

The ring $Q(d_1,\dots,d_s)$ has two other descriptions. First, recall that for any $\la,\mu\in P^+$
there exists an  embedding of $\g$-modules 
$V_{\la+\mu}\hk V_\la\T V_\mu$, $v_{\la+\mu}\mapsto v_\la\T v_\mu$. 
Therefore we have dual surjective maps $V_\la^*\T V_\mu^*\to V_{\la+\mu}^*$,
which define an algebra
\[
\bar Q(d_1,\dots,d_s)=\bigoplus_{\la=m_1\om_{d_1}+\dots +m_s\om_{d_s}} V_\la^*.
\]
Then one has an isomorphism of algebras $\bar Q(d_1,\dots,d_s)\simeq Q(d_1,\dots,d_s)$.

One can also embed the ring $Q(d_1,\dots,d_s)$ into a polynomial ring 
(thus proving that $I(d_1,\dots,d_s)$ is prime).
Namely, let $Z_{i,j}$, $1\le i,j\le n$ be a set of variables.
Then the map 
\begin{equation}\label{polyemb}
X_{j_1,\dots,j_d}\mapsto \det (Z_{l,j_l})_{1\le l\le d}
\end{equation}
induces the embedding $Q(d_1,\dots,d_s)\hk\bC[Z_{i,j}]_{1\le i,j\le n}$.
 
The ring $Q(d_1,\dots,d_s)$ has a basis described in terms of the semistandard 
tableaux. Namely, for a partition $\la=(\la_1\ge\dots\ge \la_{n-1}\ge 0)$ we denote by $Y_\la$ 
the corresponding Young diagram. For example, $Y_{(7,5,4,2,2)}$ is the following 
diagram:
\[
\begin{picture}(100,50)
\put(0,0){\line(0,1){50}}
\put(10,0){\line(0,1){50}}
\put(20,0){\line(0,1){50}}
\put(20,20){\line(0,1){30}}
\put(30,20){\line(0,1){30}}
\put(40,20){\line(0,1){30}}
\put(50,30){\line(0,1){20}}
\put(60,40){\line(0,1){10}}
\put(70,40){\line(0,1){10}}

\put(0,0){\line(1,0){20}}
\put(0,10){\line(1,0){20}}
\put(0,20){\line(1,0){40}}
\put(0,30){\line(1,0){50}}
\put(0,40){\line(1,0){70}}
\put(0,50){\line(1,0){70}}
\end{picture}
\]

We number the rows and columns of $Y_\la$ from up to down and from left
to right. Thus, the boxes of $Y_\la$ are labeled by pairs $(i,j)$. 
We denote by $\mu_j$ the length of the $j$-th column. 
  
A tableau $T$ of shape $\la$ is a filling of $Y_\la$ with numbers $T_{i,j}\in\{1,\dots,n\}$.
The number $T_{i,j}$ is attached to the box in the $i$-th row and $j$-th column.
The numbers $T_{i,j}$ are subject to the condition: $i_1<i_2$ implies
$T_{i_1,j}<T_{i_2,j}$. A tableau is called semistandard if in addition
$j_1<j_2$ implies $T_{i,j_1}\le T_{i,j_2}$. 

Let $P^+(d_1,\dots,d_s)=\bZ_+\om_{d_1}\oplus\dots\oplus \bZ_+\om_{d_s}$. 
Then we have a natural decomposition
\[
Q(d_1,\dots,d_s)=\bigoplus_{\la\in P^+(d_1,\dots,d_s)} Q_\la(d_1,\dots,d_s)=
\bigoplus_{\la\in P^+(d_1,\dots,d_s)} V_\la^*.
\]     
Recall that to a weight $\la=\sum_{i=1}^{n-1} m_i\om_i$ one can attach a partition
\[
(m_1+\dots +m_{n-1}, m_2+\dots +m_{n-1},\dots,m_{n-1}),
\]   
which we denote by the same symbol $\la$. 
 
Finally, for a  tableau $T$ of shape $\la$ we define an element 
$$X_T=\prod_{j=1}^{\la_1} X_{T_{1,j},\dots,T_{\mu_j,j}}$$
(note that $\la\in P^+(d_1,\dots,d_s)$ implies $\mu_j\in\{d_1,\dots,d_s\}$ for all $j$).
Then the elements $X_T$ labeled by the shape $\la$ semistandard tableaux form a basis 
of $Q(d_1,\dots,d_s)$.

\section{Abelian degenerations}\label{abelian}
We first recall the definition of the PBW-filtration on a highest weight 
module $V_\la$. 

The space $V_\la$ is equipped with the increasing PBW filtration $F_s$ defined as follows:
\[
F_0=\bC v_\la,\ F_s=\mathrm{span}\{x_1\dots x_kv_\la:\ k\le s, x_i\in\fn^- \}.
\]
In other words, $F_{s+1}=F_s + \fn^- F_s$.
We denote by $V_\la^a$ the associated graded space,
\begin{equation}\label{PBW}
V_\la^a=F_0\oplus\bigoplus_{s=1}^\infty F_s/F_{s-1}.
\end{equation}
The superscript $a$ stands for the abelian, see the explanation below.
In what follows we write $V_\la^a=\bigoplus_{s\ge 0} V_\la^a(s)$,
$V_\la^a(s)=F_s/F_{s-1}$. An element $x\in V_\la^a(s)$ is said to be homogeneous
of the PBW degree $s$.

Let us describe the Lie algebra and Lie group acting on $V_\la^a$.
\begin{dfn}\label{ga}
The Lie algebra $\g^a$ is isomorphic to $\g$ as a vector space and the bracket
$[\cdot,\cdot]^a$ is given by the formulas
\begin{gather*}
[h,f_\al]^a=-\al(h)f_\al,\ [h,e_\al]^a=\al(h)e_\al,\ [h,h_1]^a=0\qquad \forall h,h_1\in \h, \al\in Q_+,\\
[f_\al,f_\beta]^a=0,\ [e_\al,e_\beta]^a=[e_\al,e_\beta] \qquad \forall \al,\be\in Q_+,\\
[e_\al,f_\be]^a=\begin{cases}
[e_\al,f_\be], \text{ if } \be - \al\in Q_+,\\
0, \text { otherwise}.
\end{cases}
\end{gather*}
\end{dfn}
It is straightforward to check that  the bracket $[\cdot,\cdot]^a$
satisfies the Jacobi identity.
The superscript $a$ stays for the abelian, since the subalgebra $\fn^-$ is abelian with respect
to the bracket $[\cdot,\cdot]^a$.
In what follows we omit the superscript $a$ in the bracket $[\cdot,\cdot]^a$ if it is clear
what algebra $\g$ or $\g^a$ is considered. 

\begin{rem}
The Lie algebra $\g^a$ is a semidirect sum of the Borel subalgebra $\fb$
and an abelian ideal $(\fn^-)^a$, which is isomorphic to $\fn^-$ as a vector space.
The action of $\fb$ on  $(\fn^-)^a$ can be described as follows. The Lie algebra
$\g$ is naturally equipped with a structure of adjoint $\fb$-module and $\fb\hk\g$
is a submodule. Therefore we obtain a structure of quotient $\fb$-module on the space
$(\fn^-)^a\simeq \g/\fb$.
\end{rem}

\begin{rem}
The Lie algebra $\g^a$ can be considered as a degeneration of $\g$. In fact,
let $c^{++}_{\al,\be}$, $c^{--}_{\al,\be}$ and $c^{+-}_{\al,\be}$ be the structure constants
of $\g$, i.e.
\begin{gather*}
[e_\al,e_\be]=c^{++}_{\al,\be}e_{\al+\be},\
[f_\al,f_\be]=c^{--}_{\al,\be}f_{\al+\be},\\
[e_\al,f_\be]=c^{+-}_{\al,\be}f_{\be-\al},\ \be-\al\in Q_+,\
[e_\al,f_\be]=c^{+-}_{\al,\be}e_{\al - \be},\ \al-\be\in Q_+,\\
[e_\al,f_\al]=h_\al\in \h.
\end{gather*}
Let $\g(\veps)$ be the subalgebra of $\g$ spanned by the elements
$e_\al(\veps)=e_\al$,  $f_\al(\veps)=\veps f_\al$, $\al\in Q_+$ and $\h$. Obviously,
$\g(\veps)=\g$ for any $\veps\ne 0$. Then one has
\begin{gather*}
[e_\al(\veps),e_\be(\veps)]=c^{++}_{\al,\be}e_{\al+\be}(\veps),\
[f_\al(\veps),f_\be(\veps)]=\veps c^{--}_{\al,\be}f_{\al+\be}(\veps),\\
[e_\al(\veps),f_\be(\veps)]=c^{+-}_{\al,\be}f_{\be-\al}(\veps),\ \be-\al\in Q_+,\\
[e_\al(\veps),f_\be(\veps)]=\ve c^{+-}_{\al,\be}e_{\al - \be}(\veps),\ \al-\be\in Q_+,\\
[e_\al(\veps),f_\al(\veps)]=\veps h_\al.
\end{gather*}
In the limit $\veps\to 0$, the commutation relations above give the relations from Definition \ref{ga}.
\end{rem}

We now define the Lie group $G^a$, which corresponds to $\g^a$. 
Let $M=\dim \fn$ and let ${\mathbb G}_a$ be the additive group of the field $\bC$. The Lie group
$\bG_a^M$ can be naturally identified with the Lie group of the abelian Lie algebra
$(\fn^-)^a\hk \g^a$. We note that $B$ acts on $\g$ via the restiction of the adjoint $G$-action.
Since $\fb\hk\g$ is $B$-invariant, we obtain a structure of $B$-module on the vector space
$(\fn^-)^a\simeq \g/\fb$.
\begin{dfn}
The Lie group $G^a$ is a semidirect product $\bG_a^M\rtimes B$ of the normal
subgroup $\bG_a^M$ and the Borel subgroup $B$. The action by conjugation of $B$ on $\bG_a^M$ is induced
from the $B$-action on $(\fn^-)^a$ as above.
\end{dfn}

The following proposition is simple, but of the key importance for us.
Recall the graded modules $V^a_\la=F_0\oplus\bigoplus_{s\ge 1} F_s/F_{s-1}$ (see \eqref{PBW}).
\begin{prop}
For any $\la\in P_+$ the structure of $\g$-module on $V_\la$ induces the structures of
$\g^a$- and $G^a$-modules on $V_\la^a$.
\end{prop}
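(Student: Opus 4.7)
The plan is to show that the PBW filtration $F_\bullet$ on $V_\la$ is compatible with the $\g$-action in a prescribed manner, so that the induced operators on $V_\la^a$ satisfy the bracket relations defining $\g^a$. First I would verify that each element $x \in \g$ preserves the filtration with a prescribed degree shift: $\h$ and $\fn$ each preserve $F_s$ (degree $0$), while $\fn^-$ sends $F_s$ into $F_{s+1}$ (degree $+1$). The inclusion $\fn^- \cdot F_s \subseteq F_{s+1}$ is immediate from the definition of $F_s$, and $\h \cdot F_s \subseteq F_s$ follows because $F_s$ is spanned by weight vectors. The only nontrivial inclusion is $\fn \cdot F_s \subseteq F_s$, which I would prove by induction on $s$: the base case $\fn v_\la = 0$ is the highest-weight condition, and the inductive step on $f_\be y$ with $y \in F_{s-1}$ uses $e_\al f_\be = f_\be e_\al + [e_\al, f_\be]$, where $[e_\al, f_\be]$ lies in $\fn^- \cup \h \cup \fn$ and involves roots of strictly lower height, so each resulting summand lands in $F_s$ (by the induction hypothesis for $\fn$, by the weight observation for $\h$, or by the filtration definition for $\fn^-$).

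Next I introduce the associated graded operators $\gr(x) : V_\la^a \to V_\la^a$ coming from multiplication by $x \in \g$; these have degree $0$ when $x \in \fb$ and degree $+1$ when $x \in \fn^-$. The heart of the argument is to check that for any $x,y \in \g$ with prescribed degrees $d(x),d(y) \in \{0,1\}$, the commutator $[\gr(x), \gr(y)]$ equals $\gr([x,y])$ precisely when $[x,y]$ has filtration degree exactly $d(x)+d(y)$, and vanishes otherwise. The key observation is that $[x,y]$ in $\g$ may sit in a strictly smaller filtered piece than $d(x)+d(y)$; such terms kill the associated graded shifted by $d(x)+d(y)$ tautologically. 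Specifically, $[f_\al, f_\be] = c^{--}_{\al,\be} f_{\al+\be}$ has filtration degree $+1 < +2$, vanishing as a degree $+2$ map and matching $[f_\al, f_\be]^a = 0$; similarly $[e_\al, f_\al] = h_\al$ and $[e_\al, f_\be] = c^{+-}_{\al,\be} e_{\al-\be}$ when $\al-\be \in Q_+$ have filtration degree $0 < +1$, vanishing as degree $+1$ maps and matching the zero entries in the $\g^a$-bracket. The remaining cases preserve the expected degree and reproduce the $\g$-bracket verbatim, so the induced action on $V_\la^a$ satisfies the defining relations of $\g^a$.

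Finally, to integrate to a $G^a$-action, I would exploit that $V_\la^a$ is finite-dimensional and that $(\fn^-)^a$ acts by pairwise commuting nilpotent operators, where nilpotence follows from $F_s = V_\la$ for $s$ sufficiently large. These exponentiate to a faithful algebraic action of $\bG_a^M$ on $V_\la^a$, while $\fb$ exponentiates to a $B$-action via the usual finite-dimensional integration on a weight-space decomposition with nilpotent $\fn$-action. Compatibility of the two actions reduces to the statement that the $B$-conjugation on $\bG_a^M$ corresponds to the $\fb$-action on $(\fn^-)^a \simeq \g/\fb$ obtained as a quotient of the adjoint representation, which is exactly the defining feature of the semidirect product $G^a = \bG_a^M \rtimes B$. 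The main technical obstacle lies in the middle step: one must systematically classify each commutator $[x,y]$ by its filtration degree, tracking which ones drop in degree and therefore vanish on the appropriately shifted graded piece; everything else is routine verification.
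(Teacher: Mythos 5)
Your proposal is correct and follows essentially the same route as the paper: show $\fb$ preserves each $F_s$ while $\fn^-$ shifts $F_s$ into $F_{s+1}$, pass to the associated graded operators, verify the relations of Definition \ref{ga} by noting that commutators which drop in filtration degree vanish on the shifted graded pieces, and then exponentiate to get the $G^a$-action. You simply spell out the steps (the induction for $\fn\cdot F_s\subseteq F_s$ and the case-by-case degree bookkeeping) that the paper dismisses as straightforward.
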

\begin{proof}
The action of the Lie algebra $\fn^-$ on $V_\la$ induces the action of the abelian algebra
$(\fn^-)^a\hk \g^a$ on $V_\la^a$. In fact, for any $\al\in Q_+$ we have $f_\al F_s\hk F_{s+1}$.
Therefore, we have the action of operators $f_\al$ on the graded space $V_\la^a$, mapping
$F_s/F_{s-1}$ to $F_{s+1}/F_s$. From definition of $F_s$ we conclude that all such operators commute.
Now consider the action of $\fb$ on $V_\la$. Since $xF_s\hk F_s$ for any $x\in\fb$, we obtain
the action of $\fb$ on the graded space $V_\la^a$ mapping each quotient $F_s/F_{s-1}$
to itself. Now it is easy to see that the action of abelian
$(\fn^-)^a$ and of $\fb$ satisfy the relations from Definition \ref{ga}. Thus we obtain a structure
of $\g^a$-module on $V_\la^a$. The group $G^a$ acts on $V_\la^a$ by the exponents of the operators
from $\g^a$.
\end{proof}

\begin{rem}
The space $V_\la^a$ is a cyclic module over the universal enveloping algebra of $(\fn^-)^a\hk\g^a$
and therefore can be identified with a quotient of the polynomial algebra in variables
$f_\al$, $\al\in Q_+$ modulo some ideal $I(\la)$: 
$V_\la^a\simeq \bC[f_\al]_{\al\in Q_+}/I(\la)$. The operators $e_\al\in\fn$ act on
this quotient as differential operators. This ideal in type $A$ was computed in \cite{FFoL}.
\end{rem}

\begin{example}
Let $\g=\msl_n$, $\la=\om_d$, $1\le d\le n-1$. Recall the basis $v_J$ 
of the module $V_{\om_d}=\Lambda^d(\bC^n)$. For a sequence $J=(j_1,\dots,j_d)$
we define the PBW degree of $J$ by the formula
\begin{equation}\label{PBWdegree}
\deg J=\#\{r:\ j_r>d\}.
\end{equation}  
Then the $s$-th space of the PBW filtration $F_s$ is spanned by the vectors
$v_J$ with $\deg J\le s$. Therefore, the images of the elements $v_J$ with
$\deg J=s$ define a basis of $F_s/F_{s-1}\hk V_{\om_d}^a$. 
We denote these images by the symbol $v^a_J$ and the coordinates (dual basis)
by $X^a_J$.

The action of $\g^a$ on $V_{\om_d}^a$ can be written explicitly. Namely,
for $1\le i\le j<n$ let $\al_{i,j}=\al_i+\dots +\al_j$ be positive roots of 
$\msl_n$ and let $f_{i,j}=f_{\al_{i,j}}$, $e_{i,j}=e_{\al_{i,j}}$. Then it is easy to see
that
\begin{equation}\label{action}
f_{i,j} v^a_J=
\begin{cases}
\sum_{r=1}^d \delta_{i,j_r} v^a_{j_1,\dots,j_{r-1},j+1,j_{r+1},\dots,j_d}, 
\text{ if } i\le d, j\ge d,\\
0, \text{ otherwise };
\end{cases}
\end{equation}
\begin{equation*}
e_{i,j} v^a_J=
\begin{cases}
\sum_{r=1}^d \delta_{j+1,j_r} v^a_{j_1,\dots,j_{r-1},i,j_{r+1},\dots,j_d}, 
\text{ if } d<i\le j \text{ or } i\le j < d,\\
0, \text{ otherwise }.
\end{cases}
\end{equation*}
We note that $V^a_{\om_d}$ is no longer isomorphic to $\Lambda^d(V^a_{\om_1})$.
\end{example}

The representations $V_\la^a$ for $\g=\msl_n$ were studied in \cite{FFoL}. 
In particular, the following lemma is proved in \cite{FFoL} (see the proof of Theorem $3.11$):

\begin{lem}\label{emb}
Let $\la,\mu\in P_+$. Then
\[
V_{\la+\mu}^a\simeq \U(\g^a)(v_\la\T v_\mu)\hk V_\la^a\T V_\mu^a
\]
as $\g^a$-modules.
\end{lem}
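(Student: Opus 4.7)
The plan is to construct the map by a classical-to-graded argument based on the Cartan embedding $V_{\la+\mu}\hk V_\la\T V_\mu$, and then establish injectivity by a dimension count.

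First, I would equip $V_\la\T V_\mu$ with the convolution PBW-filtration
$G_s:=\sum_{i+j\le s}F_i(V_\la)\T F_j(V_\mu)$. Since $\fb$ preserves each $F_i(V_\la)$ and $F_j(V_\mu)$, and since each $f_\al\in\fn^-$ raises the PBW degree by at most one in each tensor factor via the cocommutative coproduct $\Delta(f_\al)=f_\al\T 1+1\T f_\al$, the $\g$-action on $V_\la\T V_\mu$ is compatible with $G_\bullet$: $\fb$ preserves $G_s$ and $\fn^-$ sends $G_s$ into $G_{s+1}$. Passing to the associated graded yields a natural identification of $\g^a$-modules $\gr_G(V_\la\T V_\mu)\simeq V_\la^a\T V_\mu^a$, where the $\g^a$-action on the right-hand side is given by the standard coproduct on $\U(\g^a)$.

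Next, I would use the classical Cartan embedding $\iota:V_{\la+\mu}\hk V_\la\T V_\mu$, $v_{\la+\mu}\mapsto v_\la\T v_\mu$. A coproduct computation shows that $\iota(\U_s(\fn^-)v_{\la+\mu})\subseteq \sum_{i+j\le s}\U_i(\fn^-)v_\la\T \U_j(\fn^-)v_\mu \subseteq G_s$, so $\iota$ is filtered: $\iota(F_s^{\mathrm{PBW}}(V_{\la+\mu}))\subseteq G_s$. Taking associated gradeds, one obtains a $\g^a$-equivariant map $\phi:V_{\la+\mu}^a\to V_\la^a\T V_\mu^a$ with $\phi(v_{\la+\mu})=v_\la\T v_\mu$. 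By cyclicity of $V_{\la+\mu}^a$ over $\U((\fn^-)^a)$, the image of $\phi$ is precisely the cyclic submodule $\U(\g^a)(v_\la\T v_\mu)$.

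It remains to show that $\phi$ is injective, equivalently, that the two filtrations on $V_{\la+\mu}$ agree: $\iota^{-1}(G_s)=F_s^{\mathrm{PBW}}(V_{\la+\mu})$. This is the heart of the argument and the main obstacle, since a priori nothing prevents a vector of PBW-degree exactly $s$ in $V_{\la+\mu}$ from lying in $G_{s-1}$ inside $V_\la\T V_\mu$. Because $\dim V_{\la+\mu}^a=\dim V_{\la+\mu}$, it is enough to prove the dimension bound $\dim\U(\g^a)(v_\la\T v_\mu)\ge \dim V_{\la+\mu}$. In type $A$, I would invoke the explicit monomial bases of $V_\la^a$, $V_\mu^a$, $V_{\la+\mu}^a$ constructed in \cite{FFoL}: these bases are labeled by combinatorial data compatible with tensor products, and one can exhibit an explicit family of monomial vectors in $\U((\fn^-)^a)(v_\la\T v_\mu)$ indexed by a basis of $V_{\la+\mu}^a$ and verify their linear independence inside $V_\la^a\T V_\mu^a$ using the explicit formulas \eqref{action}. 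As a more conceptual alternative, one can run a flat-deformation argument based on the family $\g(\ve)$ from the preceding remark: for $\ve\ne 0$, the cyclic submodule $\U(\g(\ve))(v_\la\T v_\mu)$ is the classical Cartan component $V_{\la+\mu}$, and upper semicontinuity of the fibre dimension in a flat family yields the required inequality at $\ve=0$. The combinatorial route is more concrete but ties the proof to the type $A$ monomial bases of \cite{FFoL}; the deformation route is more conceptual but requires setting up the one-parameter family of modules with some care.
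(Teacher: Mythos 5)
Your reduction is sound and your first route is, in substance, exactly what the paper does: the paper offers no independent argument for this lemma but refers to the proof of Theorem 3.11 in \cite{FFoL}, where the needed inequality $\dim \U(\g^a)(v_\la\T v_\mu)\ge \dim V_{\la+\mu}$ is obtained from the explicit monomial bases of $V^a_\la$, $V^a_\mu$, $V^a_{\la+\mu}$ together with the Minkowski-sum property of the corresponding sets of Vinberg configurations ($S_\la+S_\mu=S_{\la+\mu}$ in the notation of Section 4) and a triangularity argument for the monomial vectors applied to $v_\la\T v_\mu$. The set-up via the convolution filtration $G_s=\sum_{i+j\le s}F_i\T F_j$, the identification $\mathrm{gr}_G(V_\la\T V_\mu)\simeq V^a_\la\T V^a_\mu$, the filtered Cartan embedding and the dimension count are all correct. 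One caveat: the linear independence you describe as something one can ``verify using \eqref{action}'' is not a routine check --- formulas \eqref{action} only describe the fundamental modules $V^a_{\om_d}$, and the independence of the monomials on $v_\la\T v_\mu$ is precisely the content of the cited theorem of \cite{FFoL}, not an afterthought.

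Your ``more conceptual alternative'' via the family $\g(\ve)$ does not work as stated, because the semicontinuity goes the wrong way. After the usual rescaling adapted to the filtration, the cyclic subspaces $W_\ve=\U(\g(\ve))(v_\la\T v_\mu)$ inside the fixed space $V_\la\T V_\mu$ are spanned by vectors depending polynomially on $\ve$, so their dimension is lower semicontinuous at $\ve=0$: for free you only get $\dim W_0\le \dim V_{\la+\mu}$, which is the trivial direction (you already have the surjection $V^a_{\la+\mu}\to W_0$). The limit of the $W_\ve$ in the Grassmannian is a $\g^a$-submodule containing $W_0$, but nothing guarantees it is cyclic, i.e.\ equal to $W_0$; cyclic submodules can genuinely shrink under such degenerations. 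Asserting instead that the $\bC[\ve]$-span $\U$-generated by $v_\la\T v_\mu$ is flat with injective specialization at $\ve=0$ is equivalent to the statement being proved, so that route is circular unless it is supplemented by exactly the kind of combinatorial lower bound that \cite{FFoL} supplies.
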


We now define the degenerate flag varieties $\Fl_\la^a$.
Let $[v_\la]\in\bP(V_\la^a)$ be the line $\bC v_\la$. 
\begin{dfn}
The variety $\Fl_\la^a\hk\bP(V_\la^a)$ is a closure of the
$G^a$-orbit of $[v_\la]$,
\[
\Fl_\la^a=\ov{G^a [v_\la]}=\ov{\bG^M v_\la}\hk \bP(V_\la^a).
\]
\end{dfn}

We note that the orbit $G [v_\la]\hk \bP(V_\la)$ coincides with its closure, but
the orbit $G^a  [v_\la]$ does not. More precisely,
since the Borel subgroup $B$ stabilizes $[v_\la]$,
the orbit
$G^a  [v_\la]=\bG_a^M [v_\la]$ is an affine space $\bC^{M_\la}$,
where
\[
M_\la=\#\{\al\in Q_+:\ (\la,\al)>0\}.
\]
In fact,
\[
\bG_a^M  [v_\la]=\left\{\exp\left(\sum_{\al\in Q_+} c_\al f_\al\right)[v_\la],\ c_\al\in\bC\right\}.
\]
Since $f_\al v_\la=0$ iff $(\la,\al)=0$, we obtain
$\bG^M_a  [v_\la]\simeq \bC^{M_\la}$.

\begin{rem}
The variety $\Fl_\la^a$ is a $\bG_a^{M_\la}$-equivariant compactification of the affine
space  $\bC^{M_\la}$. In other words, $\Fl_\la^a$ are the so-called $\bG_a^M$-varieties
(see \cite{HT}, \cite{A}).
\end{rem}

\section{Pl\" ucker relations, coordinate rings and PBW-tableaux: the degenerate case}
In this section we state main results of this paper. The proofs are given in the 
following sections.

Starting from this section we assume that $\g=\msl_n$. We first consider the varieties 
$\Fl^a_{\om_d}$.

\begin{lem}\label{fund}
$\Fl_{k\om_d}^a\simeq \Fl_{k\om_d}$ for all $d=1,\dots,n-1$, $k\ge 0$.
\end{lem}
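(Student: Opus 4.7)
The plan is to reduce to the case $k=1$, where the paper already asserts $\Fl_{\omega_d}^a\simeq \Fl_{\omega_d}\simeq Gr(d,n)$ on cominuscule grounds, by realizing both $\Fl_{k\omega_d}^a$ and $\Fl_{k\omega_d}$ as images of their $k=1$ counterparts under a common Segre--Veronese-type embedding.

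First I would record the cominuscule input in a usable form. From the explicit formula \eqref{action} one reads off that the only $f_\alpha$ acting non-trivially on $V_{\omega_d}$ are those indexed by $\alpha_{i,j}$ with $i\le d\le j$, and no two such roots add to a root; hence the $f_\alpha$ act on $V_{\omega_d}$ as pairwise commuting operators. Consequently the PBW filtration of $V_{\omega_d}$ splits and there is a canonical $\g^a$-module isomorphism $V_{\omega_d}^a\simeq V_{\omega_d}$ preserving the highest weight lines, giving $\Fl_{\omega_d}^a\simeq \Fl_{\omega_d}$.

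Next I would apply Lemma \ref{emb} to obtain a $\g^a$-embedding $V_{k\omega_d}^a\hookrightarrow (V_{\omega_d}^a)^{\otimes k}$ sending $v_{k\omega_d}\mapsto v_{\omega_d}^{\otimes k}$. The induced linear embedding of projective spaces identifies $\Fl_{k\omega_d}^a$ with the closure of the $\bG_a^M$-orbit of $[v_{\omega_d}^{\otimes k}]$ in $\bP((V_{\omega_d}^a)^{\otimes k})$. Using commutativity of the $f_\alpha$ on $V_{\omega_d}^a$, any element of $\bG_a^M$ acts on the tensor product as the tensor power of its action on one factor:
\[
\exp\Bigl(\sum_{\alpha\in Q_+} c_\alpha f_\alpha\Bigr) v_{\omega_d}^{\otimes k} = \Bigl(\exp\Bigl(\sum_{\alpha\in Q_+} c_\alpha f_\alpha\Bigr) v_{\omega_d}\Bigr)^{\otimes k}.
\]
Hence the $\bG_a^M$-orbit of $[v_{\omega_d}^{\otimes k}]$ is precisely the image of the orbit of $[v_{\omega_d}]$ under the $k$-th Veronese map $[v]\mapsto [v^{\otimes k}]$, which is a closed embedding; passing to closures gives $\Fl_{k\omega_d}^a\simeq \Fl_{\omega_d}^a$.

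The identical formal argument applied classically (via the $\g$-embedding $V_{k\omega_d}\hookrightarrow V_{\omega_d}^{\otimes k}$ and the diagonal $G$-action, which already commutes past tensor factors because we are only using the group element rather than Lie algebra elements) yields $\Fl_{k\omega_d}\simeq \Fl_{\omega_d}$. Combining the three isomorphisms produces the lemma. The only non-formal ingredient is the commutativity of $\fn^-$ on $V_{\omega_d}$; I expect pinning down this cominuscule verification to be the main substantive step, with the rest being routine manipulation of Veronese/Segre embeddings.
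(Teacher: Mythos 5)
Your argument is correct, and its core for $k=1$ is exactly the paper's: the $f_\al$ with $(\al,\om_d)\ne 0$ pairwise commute already in $\msl_n$ (abelian radical/cominuscule), the orbit of $[v_{\om_d}]$ under $N^-$ resp.\ $\bG_a^M$ is swept out by $\exp\bigl(\sum_{(\al,\om_d)\ne 0}c_\al f_\al\bigr)$ in both the classical and the degenerate picture, and the two orbit maps coincide, so the closures agree. Where you differ from the paper is the treatment of $k>1$: the paper disposes of $k\om_d$ with the same one-line commutativity remark, implicitly using that the PBW filtration on $V_{k\om_d}$ splits, whereas you reduce to $k=1$ via Lemma \ref{emb}, $V^a_{k\om_d}\hookrightarrow (V^a_{\om_d})^{\otimes k}$, and the Veronese map $[v]\mapsto[v^{\otimes k}]$. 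Your reduction is a legitimate alternative and has the merit of needing nothing about $V_{k\om_d}$ or its PBW filtration beyond Lemma \ref{emb}; the paper's version is shorter but leaves the $k$-dependence implicit. Two small corrections to your write-up: formula \eqref{action} describes the action on $V^a_{\om_d}$, not on $V_{\om_d}$ — on $V_{\om_d}$ itself the $f_\al$ with $(\al,\om_d)=0$ do act nontrivially (e.g.\ $f_{1,1}$ sends $v_{1,3}$ to $v_{2,3}$ in $\Lambda^2\bC^3$); they merely annihilate $v_{\om_d}$, which is all you need, so the claimed ``canonical $\g^a$-module isomorphism $V^a_{\om_d}\simeq V_{\om_d}$'' should be weakened to an identification of vector spaces intertwining the commuting operators $f_\al$, $(\al,\om_d)\ne 0$ (the $e_\al$-actions genuinely differ). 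Also, as you note yourself in the classical case, the identity $\exp(x)\,v^{\otimes k}=(\exp(x)\,v)^{\otimes k}$ holds for any group element acting diagonally and does not use commutativity of the $f_\al$.
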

\begin{proof}
We note that for a fixed $d$ the elements $f_\al$ with $(\al,\om_d)\ne 0$ pairwise commute in
$\msl_n$. Therefore, 
\[
\Fl_{\om_d}=\ov{\left\{\exp\left(\sum_{\al: (\al,\om_d)\ne 0} c_\al f_\al\right)[v_{\om_d}], 
c_\al\in\bC\right\}}
\simeq \Fl^a_{\om_d}.
\]
\end{proof}

\begin{rem}
The reason that Lemma \ref{fund} holds for all fundamental weights is
that all $\omega_d$ are cominuscule  in type $A$ (the radical, corresponding
to each $\omega_d$ is abelian, see \cite{FFL}, \cite{FL}). This is not true in general 
for other types, so
for general $\g$ the varieties $\Fl_{\om_d}$ and $\Fl_{\om_d}^a$ do not coincide
for all $d$.
\end{rem}

Because of Lemma \ref{emb}, 
we have the following realization of the varieties $\Fl_\la^a$.
\begin{prop}
For $\la\in P_+$ let $1\le d_1<\dots < d_s<n$ be the set of all numbers $d$ such that
$(\la,\al_d)\ne 0$. Then
\[
\Fl^a_\la\simeq \overline{\bG_a^M ([v_{\om_{d_1}}]\times\dots\times [v_{\om_{d_s}}])}\hk
Gr(d_1,n)\times\dots\times Gr(d_s,n).
\]
\end{prop}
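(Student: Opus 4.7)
The plan is to iterate Lemma \ref{emb} so as to embed $V_\la^a$ into a tensor product of fundamental modules, and then to exploit the fact that $\bG_a^M$ acts diagonally on tensor products (since elements of $(\fn^-)^a$ are primitive in $U(\g^a)$) in order to collapse each tensor power $V_{\om_{d_i}}^{\otimes a_i}$ down to a single Grassmannian factor via a small-diagonal Segre construction.

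Write $\la=a_1\om_{d_1}+\cdots+a_s\om_{d_s}$ with $a_i\ge 1$. Iterating Lemma \ref{emb} (first splitting off one copy of $\om_{d_1}$, then the next, and so on) produces a $\g^a$-equivariant injection
\[
\iota:\ V_\la^a \hookrightarrow \bH:=V_{\om_{d_1}}^{\otimes a_1}\otimes\cdots\otimes V_{\om_{d_s}}^{\otimes a_s},
\]
carrying the highest weight vector $v_\la$ to $w:=v_{\om_{d_1}}^{\otimes a_1}\otimes\cdots\otimes v_{\om_{d_s}}^{\otimes a_s}$. Projectivizing yields a $G^a$-equivariant closed embedding $\bP(V_\la^a)\hookrightarrow\bP(\bH)$ sending $\Fl_\la^a$ to $\overline{\bG_a^M[w]}$.

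The key observation is that every $f_\al\in(\fn^-)^a$ is primitive, so for any $g\in\bG_a^M$,
\[
g\cdot w=(gv_{\om_{d_1}})^{\otimes a_1}\otimes\cdots\otimes (gv_{\om_{d_s}})^{\otimes a_s}.
\]
Thus $g[w]$ lies in the image of the small diagonals $\delta_i:\bP(V_{\om_{d_i}})\hookrightarrow\bP(V_{\om_{d_i}})^{\times a_i}$ composed with the iterated Segre embedding, giving a closed embedding
\[
\Sigma:\ \prod_{i=1}^s\bP(V_{\om_{d_i}})\hookrightarrow\bP(\bH),
\]
and moreover $\bG_a^M[w]=\Sigma\bigl(\bG_a^M\cdot([v_{\om_{d_1}}]\times\cdots\times[v_{\om_{d_s}}])\bigr)$. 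Since both $\bP(V_\la^a)$ and the image of $\Sigma$ are closed in $\bP(\bH)$ and each contains the orbit $\bG_a^M[w]$, the closure of this orbit taken inside $\bP(\bH)$ agrees with its closure inside either closed subvariety; hence $\Fl_\la^a\simeq\Sigma(Z)$, where $Z:=\overline{\bG_a^M\cdot([v_{\om_{d_1}}]\times\cdots\times[v_{\om_{d_s}}])}\subset\prod_i\bP(V_{\om_{d_i}})$. Finally, Lemma \ref{fund} identifies each factor $\overline{\bG_a^M[v_{\om_{d_i}}]}$ with $Gr(d_i,n)$, so $Z\subset\prod_i Gr(d_i,n)$ as claimed.

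The main technical point will be to check that iterating Lemma \ref{emb} is indeed consistent (the order of splitting affects only the order of tensor factors, which is transparent under Segre) and to make precise the claim that the two resulting orbit closures in $\bP(\bH)$ coincide as varieties; this last step uses nothing more than the fact that the closure of a subset inside a closed subvariety equals its closure in the ambient space, so both $\Fl_\la^a$ and $\Sigma(Z)$ are naturally identified with $\overline{\bG_a^M[w]}^{\bP(\bH)}$.
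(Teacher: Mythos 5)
Your argument is correct and is essentially the paper's own (the paper derives this proposition directly from Lemma \ref{emb}, iterated to embed $V_\la^a$ into a tensor product of fundamental PBW-graded modules, with the diagonal $\bG_a^M$-action and Lemma \ref{fund} identifying each factor's orbit closure with $Gr(d_i,n)$); your Segre/small-diagonal bookkeeping just makes the closure comparison explicit. Only a notational slip: the tensor factors should be the degenerate modules $V^a_{\om_{d_i}}$, which in type $A$ are compatible with $V_{\om_{d_i}}$ precisely because the fundamental weights are cominuscule.
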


\begin{rem}
We note that since all $f_\al$ commute in $V_\la^a$, we have
\begin{multline}
\exp\left(\sum_{\al\in Q_+} c_\al f_\al\right) 
\left([v_{\om_{d_1}}]\times\dots\times [v_{\om_{d_s}}]\right)=\\
\exp\left(\sum_{\al: (\al,\om_{d_1})\ne 0} c_\al f_\al\right) [v_{\om_{d_1}}]\times
\dots\times \exp\left(\sum_{\al: (\al,\om_{d_s})\ne 0} c_\al f_\al\right)[v_{\om_{d_s}}].
\end{multline}
This formula is specific for the degenerate situation and is not true in the classical case.
The reason is that the operators $f_\al$ with $(\al,\la)=0$ act trivially on $V_\la^a$,
but not on $V_\la$ itself. 
\end{rem}

\subsection{Degenerate Pl\" ucker relations}
We introduce the notation
$$\Fl^a(d_1,\dots,d_s)=\Fl^a_{\om_{d_1}+\dots + \om_{d_s}},\ 1\le d_1<\dots < d_s<n.$$
\begin{dfn}
Let $I^a(d_1,\dots,d_s)$ be an ideal in the polynomial ring in variables
$X^a_{j_1,\dots,j_d}$, $d=d_1,\dots,d_s$, $1\le j_1<\dots <j_d<n$, generated by the
elements $R^{k;a}_{L,J}$ given below. 
These elements are labeled by a pair of numbers $p\ge q$, $p,q\in\{d_1,\dots,d_s\}$, 
by an integer $k$, $1\le k\le q$ and by sequences 
$L=(l_1,\dots,l_p)$, $J=(j_1,\dots,j_q)$, which  are arbitrary subsets of the set 
$\{1,\dots,n\}$. The generating elements are given by the 
formulas 
\begin{equation}
R^{k;a}_{L,J}=X^a_{l_1,\dots,l_p} X^a_{j_1,\dots,j_q} - 
\sum_{1\le r_1<\dots <r_k\le p} 
X^a_{l'_1,\dots,l'_p} X^a_{j'_1,\dots,j'_q},
\end{equation}
where the terms of $R^{k;a}_{L,J}$ are the terms of $R^{k}_{L,J}$ \eqref{PR} 
(with a superscript $a$, to be precise) such that 
\begin{equation}\label{new}
\{l_{r_1},\dots,l_{r_k}\}\cap \{q+1,\dots,p\}=\emptyset.
\end{equation}
\end{dfn}

\begin{rem}
The initial term $X^a_{l_1,\dots,l_p} X^a_{j_1,\dots,j_q}$ is also subject to the condition
\eqref{new}, i.e. it is not present in 
$R^{k;a}_{L,J}$ if $\{j_1,\dots,j_k\}\cap \{q+1,\dots,p\}\ne\emptyset$.  
\end{rem}

\begin{rem}\label{mindeg}
Let us write the relation $R^{k}_{L,J}$ as $\sum_i X_{L^{(i)}} X_{J^{(i)}}$.
Then $R^{k;a}_{L,J}$ is equal to the sum of terms $X^a_{L^{(i_0)}} X^a_{J^{(i_0)}}$
such that the sum of PBW degrees \eqref{PBWdegree} 
$\deg L^{(i_0)}+\deg J^{(i_0)}$ is minimal among 
all sums $\deg L^{(i)}+\deg J^{(i)}$. In fact, first note that among the numbers 
$l_1,\dots,l_p$ there exists at least one $k$-tuple $l_{r_1},\dots,l_{r_k}$,
$r_1<\dots r_k$ such that 
\[
\{l_{r_1},\dots,l_{r_k}\}\cap \{q+1,\dots,p\}=\emptyset.
\]
Hence without loss of generality we can assume that 
$$\{j_1,\dots,j_k\}\cap \{q+1,\dots,p\}=\emptyset.$$ Now assume that a pair
$(L',J')$ is obtained from $(L,J)$ by interchanging $(l_{r_1},\dots,l_{r_k})$
with $(j_1,\dots,j_k)$. Then
\[
\deg L'+\deg J'=\deg L + \deg J + \#\{i:\ q+1\le l_{r_i}\le p\}.
\]
Therefore, to obtain $R^{k;a}_{L,J}$ from $R^{k}_{L,J}$, one has to pick the terms of 
minimal PBW-degree.
\end{rem}

Remark \ref{mindeg} implies the following:
\begin{cor}
The ideal $I^a(d_1,\dots,d_s)$ is homogeneous with respect to the PBW-degree.
\end{cor}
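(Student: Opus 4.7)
The plan is direct: an ideal in a graded ring is homogeneous with respect to the grading if and only if it admits a homogeneous set of generators. So the entire content of the corollary reduces to checking that each generator $R^{k;a}_{L,J}$ is itself PBW-homogeneous, i.e. every monomial $X^a_{L^{(i)}} X^a_{J^{(i)}}$ appearing in $R^{k;a}_{L,J}$ has the same value of $\deg L^{(i)} + \deg J^{(i)}$.

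This is precisely what Remark \ref{mindeg} delivers. That remark computes, for any pair $(L',J')$ obtained from $(L,J)$ by swapping a $k$-tuple $(l_{r_1},\dots,l_{r_k})$ with $(j_1,\dots,j_k)$,
\[
\deg L' + \deg J' \;=\; \deg L + \deg J + \#\{i : q+1 \le l_{r_i} \le p\}.
\]
After the normalization (harmless reindexing) that puts $\{j_1,\dots,j_k\}\cap\{q+1,\dots,p\}=\emptyset$, the constraint $\{l_{r_1},\dots,l_{r_k}\}\cap\{q+1,\dots,p\}=\emptyset$ defining $R^{k;a}_{L,J}$ cuts out exactly those terms for which the correction $\#\{i : q+1 \le l_{r_i} \le p\}$ vanishes. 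Hence all monomials surviving in $R^{k;a}_{L,J}$ share the common total PBW-degree $\deg L + \deg J$, and $R^{k;a}_{L,J}$ is PBW-homogeneous.

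Conceptually, $R^{k;a}_{L,J}$ is nothing but the minimal-PBW-degree component of the classical relation $R^{k}_{L,J}$, and any such minimal-degree component is automatically homogeneous. The corollary then follows at once from the standard fact that the ideal generated by homogeneous elements is graded. There is no real obstacle here — the statement is essentially a bookkeeping consequence of the construction in Remark \ref{mindeg}.
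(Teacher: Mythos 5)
Your proof is correct and follows essentially the same route as the paper: the corollary is stated there as an immediate consequence of Remark \ref{mindeg}, which identifies $R^{k;a}_{L,J}$ with the minimal-PBW-degree part of $R^{k}_{L,J}$, so each generator is PBW-homogeneous and the ideal they generate is graded. Your spelled-out use of the degree formula and the normalization $\{j_1,\dots,j_k\}\cap\{q+1,\dots,p\}=\emptyset$ matches the paper's intended argument exactly.
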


We now work out several examples.
\begin{example}
Let $s=1$. Then $I^a(d)=I(d)$, since there are no numbers $l$ such that 
$d+1\le l\le d$ and thus $R^{k;a}_{L,J}=R^{k}_{L,J}$ (up to a superscript $a$ 
in the notations of variables $X_J$).
\end{example}

\begin{example}
Let $\g=\msl_3$. Then the only nontrivial ideal is $I^a(1,2)$. It is generated by a single 
element
\[
R^{1;a}_{(1,2),(3)}=X^a_{1,2}X^a_3 + X^a_{2,3}X^a_1.
\]
We note that 
\[
R^{1}_{(1,2),(3)}=X_{1,2}X_3 -X_{1,3}X_2 + X_{2,3}X_1.
\]
The middle term is missing in $R^{1;a}_{(1,2),(3)}$, because 
\[
\deg (1,2)+\deg(3)=\deg(2,3)+\deg(1)=1, \text{ but } \deg(1,3)+\deg(2)=2,
\] 
or, equivalently, because $2$ satisfies $1+1\le 2\le 2$ (here $q=1,p=2$).
\end{example}

\begin{example}
Let $\g=\msl_4$. Consider the ideal $I^a(1,2)$. This ideal is generated by
the following elements:
\begin{gather*}
R_{(1,2),(3)}^{1;a}=X^a_{1,2}X^a_3 + X^a_{2,3}X^a_1,\ 
R_{(1,2),(4)}^{1;a}=X^a_{1,2}X^a_4 + X^a_{2,4}X^a_1,\\
R_{(1,3),(4)}^{1;a}=X^a_{1,3}X^a_4 - X^a_{1,4}X^a_3 + X^a_{3,4}X^a_1,\
R_{(2,3),(4)}^{1;a}=X^a_{2,3}X^a_4 - X^a_{2,4}X^a_3,\\
R_{(1,2),(3,4)}^{1;a}=X^a_{1,2}X^a_{3,4} - X^a_{1,3}X^a_{2,4} + X^a_{2,3}X^a_{1,4}.
\end{gather*}
We note that the relation $R_{(1,2),(3,4)}^{1;a}$ is exactly the one defining the Grassmanian
$Gr(2,4)$.
\end{example}

\begin{example}\label{I(1,n-1)}
For an arbitrary $n$, consider the ideal $I^a(1,n-1)$. This ideal is generated by a single element
\[
R_{(1,\dots,n-1),(n)}^{1;a}=X^a_{1,\dots,n-1}X^a_n - (-1)^{n} X^a_{2,\dots,n}X^a_1.
\]
In the classical case the corresponding relation is given by
\[
R_{(1,\dots,n-1),(n)}^{1}=\sum_{i=1}^n (-1)^{n-i} X_{1,\dots,\hat{i}\dots,n}X_i.
\]
\end{example}

The following theorem is the main statement of our paper, it will be proved in Section \ref{geom}.
\begin{thm}\label{main}
The variety $\Fl^a(d_1,\dots,d_s)\hk \times_{i=1}^s \bP(\Lambda^{d_i} \bC^n)$ is 
defined by the ideal $I^a(d_1,\dots,d_s)$.
\end{thm}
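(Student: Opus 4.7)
The plan is to establish two inclusions: $\Fl^a(d_1,\dots,d_s)\subseteq V(I^a(d_1,\dots,d_s))$ and the reverse.

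For the first inclusion, since $\Fl^a_\la$ is the closure of the orbit $\bG_a^M[v_\la]$ and $R^{k;a}_{L,J}$ is polynomial in the Pl\"ucker coordinates, it suffices to check vanishing on $\bG_a^M[v_\la]$ itself. I would parametrize a typical point as $\exp\bigl(\sum_{\al\in Q_+}c_\al f_\al\bigr)[v_{\om_{d_1}}]\times\cdots\times\exp\bigl(\sum c_\al f_\al\bigr)[v_{\om_{d_s}}]$, use the explicit action formula \eqref{action} to express each coordinate $X^a_J$ as a monomial in the $c_\al$, and then substitute into $R^{k;a}_{L,J}$ to verify identical vanishing. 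Alternatively, one can invoke the $\veps$-family $\g(\veps)$ and rescale each Pl\"ucker variable $X_J$ by $\veps^{\deg J}$: the classical relation $R^k_{L,J}$ holds for all $\veps\neq 0$, and in the limit $\veps\to 0$ only the terms of minimum total PBW degree survive, which by Remark \ref{mindeg} are precisely those constituting $R^{k;a}_{L,J}$.

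For the reverse inclusion I would use a Hilbert function argument. Let $Q^a(d_1,\dots,d_s)=\bC[X^a_J]/I^a(d_1,\dots,d_s)$, multigraded by the multidegree $\la\in P^+(d_1,\dots,d_s)$. The first inclusion yields a surjection of $Q^a(d_1,\dots,d_s)$ onto the homogeneous coordinate ring of $\Fl^a(d_1,\dots,d_s)$, and to prove it is an isomorphism it suffices to show the estimate $\dim Q^a(d_1,\dots,d_s)_\la\le\dim V^a_\la$ for each $\la$. Indeed, by Lemma \ref{emb} the iterated tensor product construction embeds $(V^a_\la)^*$ into the coordinate ring, giving a matching lower bound, and the total Hilbert function agrees with the classical one by flatness of the degeneration $\Fl_\la\rightsquigarrow\Fl^a_\la$ (also to be established). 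The upper bound is obtained by exhibiting a spanning set of $Q^a(d_1,\dots,d_s)_\la$ indexed by shape-$\la$ semistandard PBW-tableaux: via a straightening procedure, any product $X^a_{J^{(1)}}\cdots X^a_{J^{(r)}}$ is to be rewritten modulo $I^a(d_1,\dots,d_s)$ as a linear combination of monomials $X^a_T$ attached to such tableaux, and one checks that the cardinality of the PBW-tableau indexing set equals $\dim V^a_\la$.

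The main obstacle will be this straightening algorithm. Because $R^{k;a}_{L,J}$ contains strictly fewer terms than $R^k_{L,J}$, one cannot simply transport the classical Pl\"ucker straightening laws. A suitable monomial order on the variables $X^a_J$ must be chosen so that every non-semistandard (in the PBW sense) monomial occurs as the distinguished leading term of some $R^{k;a}_{L,J}$, and termination of the resulting rewriting process must be verified. Controlling this combinatorics — which requires the precise definition of semistandard PBW-tableaux and enumeration matching $\dim V^a_\la$ established in \cite{FFoL} — is what is deferred to the combinatorial Section~$4$, and only once it is in place does the geometric argument in Section~$5$ close.
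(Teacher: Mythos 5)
Your overall strategy is sound and the first half coincides with the paper's: the vanishing of the $R^{k;a}_{L,J}$ on the open orbit is proved exactly by your ``lowest PBW-degree terms of the classical relations'' argument (Lemma \ref{cell}, via Remark \ref{mindeg}). For the reverse inclusion, however, you take a genuinely different route from the printed proof. The paper does not compare Hilbert functions directly: it first shows that on the chart where all $X^a_{1,\dots,d_r}\ne 0$ the zero set of $I^a$ is exactly the open $\bG_a^M$-orbit $U$, and then proves $I(U)=I^a$ by an elimination argument --- multiplying a polynomial $F$ vanishing on $U$ by powers of the $X^a_{1,\dots,d_r}$ to remove, group by group, the variables of higher PBW degree, reducing to the Grassmannian case where $I^a(d)=I(d)$, and finally dividing out the auxiliary factors using that $I^a$ is prime (Corollary \ref{prime}, which comes from the isomorphism $Q^a\simeq\tilde Q^a$ with a subring of $\bC[Z_{i,j}]$). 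Your graded-dimension argument can replace that induction, and in fact it is implicitly what the isomorphism $Q^a\simeq\tilde Q^a$ of Section~4 encodes, but two points need care. First, the lower bound $\dim(\bC[X^a]/I(\Fl^a))_\la\ge\dim V^a_\la$ is a statement about honest functions on the orbit, not just about the abstract algebra $\bigoplus_\la (V^a_\la)^*$: from Lemma \ref{emb} you still must argue that a degree-$\la$ polynomial vanishing on $U$ induces the zero functional on the cyclic submodule $\U(\g^a)(v_{\om_{d_1}}^{\T m_1}\T\cdots)\simeq V^a_\la$, i.e.\ that the vectors $\exp\bigl(\sum c_\al f_\al\bigr)v_\la^{\T}$ span this submodule as $\bc$ varies; the paper avoids this by exhibiting the restricted functions explicitly as the polynomials $D^a_T$ and proving their linear independence for semistandard $T$ (Lemma \ref{linind}). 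Second, do not invoke flatness for the Hilbert-function agreement: in the paper flatness is a consequence of the tableau basis (it is proved afterwards, from Proposition \ref{nb} and the $\bC[t]$-freeness of $Q^t$), so using it here would be circular, and in any case flatness controls $\bC[X^a]/I^a$, not the vanishing ideal of the variety, so it cannot substitute for the lower bound above. With the density (or Lemma \ref{linind}-type) argument supplied, your upper bound via straightening is exactly the paper's Lemma \ref{XT} together with the count of Proposition \ref{bij}, and your proof closes; its advantage is that it bypasses both the chart analysis and the primeness of $I^a$, while the paper's route yields those finer statements along the way.
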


\begin{example}
Let $s=2$, $d_1=1$, $d_2=n-1$. The variety $\Fl^a(1,n-1)$ is embedded into the product 
$\bP^{n-1}\times \bP^{n-1}=\bP(V^a_{\om_1})\times \bP(V^a_{\om_{n-1}})$. The elements $f_\al$ acting
nontrivially on $V^a_{\om_1}$ are $f_{1,i}$, $i=1,\dots,n-1$, they act as
$f_{1,i}v^a_j=\delta_{1,j}v^a_{i+1}$. Similarly, the elements $f_\al$ 
acting
nontrivially on $V_{\om_{n-1}}$ are $f_{i,n-1}$, $i=1,\dots,n-1$, they act as
\[
f_{i,n-1}v^a_{1,\dots,\hat{j}\dots,n}=(-1)^{n-i+1}\delta_{j,n} v^a_{1,\dots,\hat{i}\dots,n}.
\]
Therefore the coordinates of a point
\[
\exp\left(\sum_{i=1}^{n-1} c_{1,i}f_{1,i}+\sum_{i=1}^{n-1} c_{i,n-1}f_{i,n-1}\right)
\left([v_{\om_1}]\T [v_{\om_{n-1}}]\right)
\]
are given by
\[
X^a_1=1, X^a_i=c_{1,i-1}\ (i>1),\ X^a_{1,\dots,n-1}=1, 
X^a_{1,\dots,\hat{i}\dots,n}=(-1)^{n-i+1} c_{i,n-1}.
\]
Thus, the open orbit of $\Fl^a(1,n-1)$ is the following set of points of the product of two
projective spaces:
\[
\left[v^a_1+\sum_{i=2}^n c_{1,i-1}v^a_i\right]\times \left[v^a_{1,\dots,n-1}+
\sum_{i=1}^{n-1} (-1)^{n-i-1}c_{i,n-1}v^a_{1,\dots\hat{i},\dots,n}\right].
\]
Therefore, the closure of the open cell is defined by a single algebraic relation
\[
X^a_{1,\dots,n-1}X^a_n - (-1)^{n} X^a_{2,\dots,n}X^a_1=0,
\]
which agrees with Example \ref{I(1,n-1)}. Recall that the classical variety
$\Fl(1,n-1)$ is defined by a single relation $R^{1}_{(1,\dots,n-1),(n)}$.
\end{example}

It turns out that our degeneration is flat. 
We will prove the following proposition.
\begin{prop}
The varieties $\Fl=\Fl(d_1,\dots,d_s)$ and $\Fl^a=\Fl^a(d_1,\dots,d_s)$ can be connected 
by a flat family $\Fl^t$ such that $\Fl^0\simeq \Fl^a$ and $\Fl^t\simeq \Fl$, if $t\ne 0$.  
\end{prop}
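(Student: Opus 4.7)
The plan is to realize $\Fl^a$ as the flat limit of $\Fl$ under the $\bC^*$-action induced by the PBW-grading on Pl\"ucker coordinates. Assign to each variable $X_{j_1,\dots,j_d}$ the weight $\deg J=\#\{r:j_r>d\}$ of \eqref{PBWdegree}; this defines a $\bC^*$-action on each $\bP(V_{\om_{d_i}})$ and on their product. For every classical Pl\"ucker relation $R^k_{L,J}=\sum_i X_{L^{(i)}}X_{J^{(i)}}$, let $d_{\min}=\min_i(\deg L^{(i)}+\deg J^{(i)})$ and set
\[
R^{k;t}_{L,J}=\sum_i t^{\deg L^{(i)}+\deg J^{(i)}-d_{\min}}X_{L^{(i)}}X_{J^{(i)}}\in\bC[t][X_J].
\]
Let $\mathcal I\subset\bC[t][X_J]$ be the ideal generated by all $R^{k;t}_{L,J}$, defining a closed subscheme $\mathcal X\hk\mathbb A^1\times\prod_i\bP(V_{\om_{d_i}})$; I take $\Fl^t$ to be the fiber of $\mathcal X$ over $t\in\mathbb A^1$.

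Identifying the fibers is then immediate from the construction. Setting $t=0$ in $R^{k;t}_{L,J}$ retains exactly the monomials of minimal total PBW degree, which by Remark~\ref{mindeg} are precisely the generators of $I^a(d_1,\dots,d_s)$; hence $\Fl^0=\Fl^a(d_1,\dots,d_s)$ by Theorem~\ref{main}. For any $t\ne 0$, the substitution $X_J\mapsto t^{-\deg J}X_J$ carries $R^{k;t}_{L,J}$ to a nonzero scalar multiple of $R^k_{L,J}$, so $\Fl^t\simeq\Fl(d_1,\dots,d_s)$ via the corresponding $\bC^*$-isomorphism.

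It remains to prove flatness of $\mathcal X\to\mathbb A^1$. Since the base is a smooth affine line, flatness is equivalent to the multigraded Hilbert function of the fibers being independent of $t$. The multihomogeneous component of multidegree $\la=m_1\om_{d_1}+\cdots+m_s\om_{d_s}$ of the classical coordinate ring is $V_\la^*$ (Subsection~\ref{clas}), while the main theorem of the paper identifies the corresponding component of the coordinate ring of $\Fl^a$ with $(V_\la^a)^*$. Because $V_\la^a$ is the graded space associated to the PBW filtration on $V_\la$, one has $\dim V_\la^a=\dim V_\la$, so the two Hilbert functions agree. The standard upper-semicontinuity of Hilbert functions under initial-ideal degeneration then converts this equality into flatness of the family.

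The main obstacle is not the degeneration construction itself, which is a routine Rees-algebra style setup, but the Hilbert-function input: one needs the identification of the multihomogeneous components of the degenerate coordinate ring with $(V_\la^a)^*$. This is one of the central theorems of the paper and rests on the construction of a basis by semistandard PBW-tableaux; once that is in place, flatness follows formally.
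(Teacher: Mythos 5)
Your proposal is correct, and the family you build is literally the paper's family: the generators $R^{k;t}_{L,J}$, the identification of the fiber at $t=0$ with $V(I^a)$ and of the fibers at $t\ne 0$ with $\Fl$ via the rescaling $X_J\mapsto t^{-\deg J}X_J$ all coincide with what the paper does. The difference is in how flatness is established. The paper proves that $Q^t=\bC[t][X_J]/I^t$ is a \emph{free} $\bC[t]$-module by exhibiting an explicit basis, namely the monomials $X_T$ indexed by semistandard PBW-tableaux: linear independence follows by specializing $t$ at a generic value and invoking Proposition \ref{nb} (the $X_T$, $T\in ST^a_\la$, form a basis of the classical ring $Q$), and the spanning argument is run again over $\bC[t]$ as in Proposition \ref{nb}. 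You instead compare multigraded Hilbert functions of the fibers, using $Q_\la\simeq V_\la^*$, $Q^a_\la\simeq (V_\la^a)^*$ and $\dim V_\la^a=\dim V_\la$, and then conclude flatness from the standard fact that a finitely generated graded module over the PID $\bC[t]$ whose fiber dimensions are constant (at $t=0$ and at all $t\ne 0$) is free; equivalently, constancy of the Hilbert polynomial over the integral base $\mathbb{A}^1$. This is sound, and you correctly identify the one nontrivial input: since $I^t$ is generated by homogenizations of the chosen generators only (not of the whole ideal), the special fiber could a priori be too large, and it is exactly the theorem $Q^a\simeq\bigoplus_\la (V_\la^a)^*$ -- itself proved via the PBW-tableaux -- that rules this out. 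So both arguments rest on the same combinatorial machinery; yours is slightly quicker at the last step because it avoids redoing the spanning argument over $\bC[t]$, whereas the paper's argument buys the stronger conclusion that $Q^t$ has an explicit monomial $\bC[t]$-basis indexed by semistandard PBW-tableaux, uniform in $t$.
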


\subsection{Coordinate rings: quotient polynomial rings}
Define a quotient algebra 
\[
Q^a(d_1,\dots,d_s)=\bC[X^a_{i_1,\dots,i_d}]/I^a(d_1,\dots,d_s),
\]
where the variables $X^a_{i_1,\dots,i_d}$ are labeled by sequences 
$1\le i_1<\dots<i_d\le n$, $d=d_1,\dots,d_s$. 

\begin{thm}
The algebra $Q^a(d_1,\dots,d_s)$ is isomorphic to the algebra
\[
\bigoplus_{\la\in P^+(d_1,\dots,d_s)} (V_{\la}^a)^*,
\] 
where the multiplication $(V_{\la}^a)^*\T (V_{\mu}^a)^*\to (V_{\la+\mu}^a)^*$
is induced by the embedding $V_{\la+\mu}^a\hk V_{\la}^a\T V_{\mu}^a$ 
(see Lemma \ref{emb}).
\end{thm}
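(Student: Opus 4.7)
The target algebra $\bigoplus_{\la} (V_\la^a)^*$ is a commutative multigraded algebra, with the multi-degree $\la = \sum m_i\om_{d_i}$ component being $(V_\la^a)^*$ and the multiplication dual to the $\g^a$-embedding of Lemma \ref{emb}. The degree-$\om_{d_i}$ piece is $(V_{\om_{d_i}}^a)^*$ with basis $\{X^a_J\}$ dual to $\{v^a_J\}$. First I would construct an algebra homomorphism
\[
\psi \colon \bC[X^a_{j_1,\dots,j_d}] \longrightarrow \bigoplus_{\la\in P^+(d_1,\dots,d_s)} (V_\la^a)^*
\]
by sending each generator $X^a_J\in (V_{\om_d}^a)^*$ to itself and extending multiplicatively. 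Iterating Lemma \ref{emb} and using that $v_\la \mapsto v_{\om_{d_1}}^{\T m_1}\T\cdots\T v_{\om_{d_s}}^{\T m_s}$ together with the fact that $(\fn^-)^a$ is abelian and acts by derivations, the embedding $V_\la^a \hk V_{\om_{d_1}}^{a,\T m_1}\T\cdots\T V_{\om_{d_s}}^{a,\T m_s}$ lands in $\bigotimes_i S^{m_i}(V_{\om_{d_i}}^a)$. Dualizing gives the surjection $\bigotimes_i S^{m_i}((V_{\om_{d_i}}^a)^*) \twoheadrightarrow (V_\la^a)^*$, which is precisely the multi-degree $\la$ component of $\psi$; hence $\psi$ is surjective.

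The crux is to verify that $I^a(d_1,\dots,d_s)\subset\ker\psi$, i.e., that each generator $R^{k;a}_{L,J}$ vanishes on $V_{\om_p+\om_q}^a\hk V_{\om_p}^a\T V_{\om_q}^a$. Classically, $R^k_{L,J}$ vanishes on $V_{\om_p+\om_q}\hk V_{\om_p}\T V_{\om_q}$. Equip the tensor product with the bifiltration $F_s V_{\om_p}\T F_t V_{\om_q}$; its associated graded is $V_{\om_p}^a\T V_{\om_q}^a$. The embedding of Lemma \ref{emb}, once produced on the associated-graded level, forces the induced bifiltration on the submodule $V_{\om_p+\om_q}$ to coincide with the PBW filtration (since the associated graded of the filtered embedding reproduces the degenerate embedding $V_{\om_p+\om_q}^a\hk V_{\om_p}^a\T V_{\om_q}^a$). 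By Remark \ref{mindeg}, $R^{k;a}_{L,J}$ is exactly the minimal-bidegree component of $R^{k}_{L,J}$; it is therefore the symbol of a relation that vanishes in the filtered setting, so it vanishes on the associated graded.

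This produces a surjective algebra homomorphism $\bar\psi\colon Q^a(d_1,\dots,d_s)\twoheadrightarrow \bigoplus_\la (V_\la^a)^*$. For injectivity I would use the flatness proposition stated just above: flatness of the family $\Fl^t$ preserves the (equivariant) multigraded Hilbert function, so $\dim Q^a_\la = \dim Q_\la$. Classically $\dim Q_\la = \dim V_\la^* = \dim V_\la^a = \dim (V_\la^a)^*$, so the surjection $\bar\psi$ is an isomorphism in each multi-degree.

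The main obstacle is the middle step: the claim that the PBW filtration on $V_{\om_p+\om_q}$ is exactly the one induced from $V_{\om_p}\T V_{\om_q}$. A priori the induced filtration could be coarser than the PBW filtration, which would make $R^{k;a}_{L,J}$ potentially not the genuine symbol of $R^k_{L,J}$ and could cause the argument to fail. Establishing strict compatibility of filtrations — equivalently, that the associated graded of the classical embedding is the degenerate embedding of Lemma \ref{emb} — is the technical heart, and it is here where the type $A$ input from \cite{FFoL} (and the cominuscule character of the fundamental weights) does the essential work.
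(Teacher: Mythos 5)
Your first half (the surjection) is essentially the paper's argument: the map $Q^a\to\bigoplus_\la (V_\la^a)^*$ exists because the generators $X^a_J$ satisfy the degenerate relations, and the strictness issue you flag --- that the PBW filtration on $V_{\la+\mu}$ agrees with the one induced from $V_\la\T V_\mu$, so that $R^{k;a}_{L,J}$ really is the symbol of $R^k_{L,J}$ --- is exactly what Lemma \ref{emb} (imported from \cite{FFoL}) supplies, via the compatibility of the embedding $V^a_{\la+\mu}\hk V^a_\la\T V^a_\mu$ with the PBW grading together with Remark \ref{mindeg}. So up to Corollary \ref{bar} you are on the paper's track.

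The genuine gap is your injectivity step. You invoke the flatness proposition ``stated just above,'' but at that point it is only stated, not proved; in the paper it is proved in Section 5, and its proof (freeness of $Q^t$ over $\bC[t]$ on the basis $X_T$, $T\in ST^a_\la$) rests on Proposition \ref{bij} (counting semistandard PBW-tableaux by Vinberg configurations), the straightening Lemma \ref{XT}, and Proposition \ref{nb} --- i.e., on precisely the combinatorial machinery the paper uses to prove this theorem directly, by establishing $\dim Q^a_\la\le \#ST^a_\la=\dim V_\la$ and comparing with $\dim\bar Q^a_\la=\dim V_\la$. So your route either becomes circular or simply defers the real work (the dimension bound on $Q^a_\la$) to an unproved statement. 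There is also a secondary imprecision: the proposition as stated is about the varieties $\Fl^t$, and flatness of a family of multiprojective varieties by itself controls Hilbert polynomials, not the Hilbert function in every multidegree; to conclude $\dim Q^a_\la=\dim Q_\la$ degree by degree you need flatness (freeness) of the graded ring $Q^t$ itself, which is exactly what the paper establishes via the tableaux basis. To close the gap you would need to prove the spanning statement for the monomials $X^a_T$, $T\in ST^a_\la$ (or some equivalent upper bound on $\dim Q^a_\la$), at which point the flatness detour is unnecessary.
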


\subsection{PBW-semistandard tableaux: polynomial subring}
A crucial statement about the ideal $I^a$ is that it is prime. As in the classical case
this follows from the realization of the quotient algebra as a subalgebra of the polynomial 
algebra.  
In the classical case the key role in the whole picture plays a notion of a
semistandard tableau. We introduce the PBW-version here.

Recall that for a partition $\la=(\la_1\ge\dots\ge \la_{n-1}\ge 0)$ we denote by $Y_\la$ 
the corresponding Young diagram. 
We denote by $\mu_j$ the length of the $j$-th column. 

\begin{dfn}\label{t}
A PBW-tableau of shape $\la$ is a filling $T_{i,j}$ of 
the Young diagram $Y_\la$ with  numbers $1,\dots,n$. The number
$T_{i,j}\in\{1,\dots.n\}$ is attached to the box $(i,j)$. 
The filling  $T_{i,j}$ has to satisfy the following properties:
\begin{itemize}
\item If $T_{i,j}\le \mu_j$, then $T_{i,j}=i$,
\item If $i_1<i_2$ and $T_{i_1,j}\ne i_1$, then
$T_{i_1,j} > T_{i_2,j}$. 
\end{itemize} 
\end{dfn}

\begin{rem}
Usually the entries of a tableau increase from up to down and from left to right.
The reason why it is more natural for us to change the directions is two fold:
first, we want to make difference between numbers, which are smaller than the length
of a column and the others. Second, our convention better fits to the formalism 
of Vinberg's patterns \cite{V}, \cite{FFoL}, see the proof of Proposition \eqref{bij}. 
\end{rem}

\begin{rem}
Let us explain the reason why we want $T_{i,j}\le \mu_j$ to imply 
$T_{i,j}=i$. Assume that $Y_\la$ consists of one column  of length $d$.
Then we have $V_\la^a\simeq V_{\om_d}^a$. As in the classical case we want the shape $\la$
PBW-tableaux to label the standard basis of $V_{\om_d}^a$. So let us
take a basis vector $v^a_{J}=v^a_{j_1,\dots,j_d}$, $j_1<\dots <j_d$. 
This vector is obtained from the highest weight vector $v^a_{1,\dots,d}$ (which is the 
image of $v_1\wedge\dots\wedge v_d $)
by application of several operators $f_\al$.
Note that the 
operators $f_{i,j}$ with $1\le i\le j<d$ act trivially on 
$V_{\om_d}^a$. Therefore, whenever a number $j\le d$ appears in $J$,
it means that no $f_\al$ has been applied to $v_j$.   Hence it is natural to reorder
the vectors $v_j$ in the product $v_{j_1}\wedge\dots\wedge v_{j_d}$ in such a way
that if $j_r\le d$ then $j_r$ appears at the $j_r$-th place.  
\end{rem}

We note that $T_{i,j}\ge i$ for any pair $i,j$.

\begin{dfn}\label{sst}
A semistandard PBW-tableau of shape  $\la$ is a PBW-tableau of shape $\la$  
subject to the condition:
\begin{itemize}
\item
for any $j>1$ and any $i$  there exists $i_1\ge i$ such that $T_{i_1,j-1}\ge T_{i,j}$.
\end{itemize}
\end{dfn}

We denote the set of semistandard PBW-tableaux of shape $\la$ by $ST^a_\la$.

The shape $\la$ semistandard PBW-tableaux parametrize a basis of $V_\la^a$.
In particular
\begin{prop}
The number of semistandard PBW-tableaux is equal to $\dim V_\la$.
\end{prop}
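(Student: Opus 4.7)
The plan is to exhibit an explicit bijection between shape-$\la$ semistandard PBW-tableaux and the Feigin--Fourier--Littelmann--Vinberg (FFLV) basis of $V_\la^a$ constructed in \cite{FFoL}; once this is in place, the proposition follows from the identity $|\text{FFLV basis of } V_\la^a| = \dim V_\la^a = \dim V_\la$ proved there.

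I would first treat a single column of length $d$, corresponding to $\la = \om_d$. The fixed-point condition $T_{i,j}=i$ for $T_{i,j}\le\mu_j$ together with the strict-decrease condition on the remaining entries in Definition \ref{t} set up a bijection between PBW-columns of length $d$ and $d$-element subsets $J \subset \{1,\dots,n\}$, and hence with the basis vectors $v^a_J$ of $V^a_{\om_d}$. Under this identification, each ``jump'' $T_{i,j}-i$ at a non-fixed row records the index of a root operator $f_\al$ whose application to the highest weight vector produces $v^a_J$, in accordance with the formula for the $\g^a$-action in \eqref{action}. In particular, the number of length-$d$ PBW-columns equals $\binom{n}{d}=\dim V_{\om_d}$, settling the fundamental case.

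For a general weight $\la$, I would encode a semistandard PBW-tableau column by column as a tuple of multiplicities $(s_\al)_{\al\in Q_+}$ of non-negative integers, and verify that the condition of Definition \ref{sst} is equivalent to the system of Dyck-path inequalities cutting out the FFLV polytope $P(\la)$ of \cite{FFoL}. The reordering convention built into Definition \ref{t} (small values staying fixed in place, large values placed decreasingly from the top) is precisely the one that converts the column-to-column comparison ``for every $i$, some $i_1\ge i$ with $T_{i_1,j-1}\ge T_{i,j}$'' into a Vinberg-pattern dominance between adjacent subsets, which in turn matches a Dyck-path inequality. The main obstacle is this translation step: one must carefully match each instance of the semistandardness condition with the correct polytope inequality, verifying the correspondence Dyck path by Dyck path, and then read off the inverse map which reconstructs a tableau from a polytope lattice point. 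Once the bijection is set up, the Minkowski-sum property of FFLV polytopes together with the integer-point count from \cite{FFoL} delivers $|ST^a_\la|=\dim V_\la$.
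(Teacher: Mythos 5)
Your strategy is the same one the paper uses: Proposition \ref{bij} is proved there by a bijection $\psi$ between Vinberg's configurations $(s_{i,j})$ satisfying the Dyck-path inequalities \eqref{V} (whose count is $\dim V_\la$ by \cite{FFoL}) and semistandard PBW-tableaux, with the inverse given exactly by your encoding $s_{k,l}=\#\{j:\ T_{k,j}=l+1\}$ (formula \eqref{bs}). So the choice of combinatorial model and the dictionary between tableaux and configurations are correct. The problem is that your write-up stops precisely where the proof begins: the claim that the semistandardness condition of Definition \ref{sst} ``is equivalent to'' the Dyck-path inequalities is asserted as something to be ``verified Dyck path by Dyck path,'' but this verification is the entire technical content of the result, and it is not a routine translation.

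Concretely, the correspondence is not local in the way your plan suggests: a single instance of the condition in Definition \ref{sst} at a box $(i,j)$ does not match a single inequality \eqref{V}, and conversely a single Dyck-path inequality does not come from one column-to-column comparison. In the paper the nontrivial direction is well-definedness of the map from configurations to tableaux: one fills the rows from bottom to top and must show the entries prescribed by the $s_{k,l}$ fit into a row of length $m_k+\dots+m_{n-1}$ (inequality \eqref{Acol}). This is done by an iterative construction of points $(k,l),(k_1,l_1),(k_2,l_2),\dots$ read off from the partially built tableau via the quantities $A_{col}$, $A_{row}$, which are then shown to lie on a common Dyck path, so that the needed bound follows from the inequality for that one path; a separate argument (using Lemma \ref{>} and Corollary \ref{tab}) is needed to check that the inverse map \eqref{bs} actually lands in $S_\la$ and that the two maps are mutually inverse. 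None of this chain-building appears in your proposal, and your single-column case (where every filling is automatically semistandard and the inequalities are vacuous) gives no guidance for it. As it stands, the proposal is a correct plan with the key lemma missing rather than a proof.
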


\begin{example}
Let $\la=(\underbrace{1,\dots,1}_d)$. 
Then each sequence $1\le i_1<\dots <i_d\le n$
gives rise to a semistandard PBW-tableau of shape $(d)$.
\end{example}

\begin{example}
Let $\la=(k)$ be a one part partition. Then the shape $\la$ semistandard PBW-tableaux
are of the following form:
\[
\begin{picture}(160,20)
\put(0,0){\line(1,0){160}}
\put(0,10){\line(1,0){160}}
\put(0,0){\line(0,1){10}}
\put(10,0){\line(0,1){10}}
\put(30,0){\line(0,1){10}}
\put(40,0){\line(0,1){10}}
\put(80,0){\line(0,1){10}}
\put(90,0){\line(0,1){10}}
\put(110,0){\line(0,1){10}}
\put(120,0){\line(0,1){10}}
\put(130,0){\line(0,1){10}}
\put(150,0){\line(0,1){10}}
\put(160,0){\line(0,1){10}}
\put(2,2){n}
\put(17,3){...}
\put(32,2){n}
\put(55,3){.\ .\ .}
\put(82,2){2}
\put(97,3){...}
\put(112,2){2}
\put(122,2){1}
\put(137,3){...}
\put(152,2){1}
\end{picture}
\]
Here a number $i=1,\dots,n$ appears $a_i$ times, $a_i\ge 0$ and $\sum_{i=1}^n a_i=k$.  
In particular, the number of semistandard PBW-tableaux is equal to 
$\dim S^k(\bC^n)=\dim V_{k\om_1}$.
\end{example}

\begin{example}
Let $\g=\msl_3$, $\la=(2,1)$. Then the list of the semistandard PBW-tableaux is as follows:\\
\[
\begin{picture}(20,20)
\put(0,0){\line(1,0){10}}
\put(0,10){\line(1,0){20}}
\put(0,20){\line(1,0){20}}
\put(0,0){\line(0,1){20}}
\put(10,0){\line(0,1){20}}
\put(20,10){\line(0,1){10}}
\put(2,11){1}
\put(12,11){1}
\put(2,1){2}
\end{picture}\ , \
\begin{picture}(20,20)
\put(0,0){\line(1,0){10}}
\put(0,10){\line(1,0){20}}
\put(0,20){\line(1,0){20}}
\put(0,0){\line(0,1){20}}
\put(10,0){\line(0,1){20}}
\put(20,10){\line(0,1){10}}
\put(2,11){1}
\put(12,11){2}
\put(2,1){2}
\end{picture}\ ,\
\begin{picture}(20,20)
\put(0,0){\line(1,0){10}}
\put(0,10){\line(1,0){20}}
\put(0,20){\line(1,0){20}}
\put(0,0){\line(0,1){20}}
\put(10,0){\line(0,1){20}}
\put(20,10){\line(0,1){10}}
\put(2,11){1}
\put(12,11){1}
\put(2,1){3}
\end{picture}\ ,\
\begin{picture}(20,20)
\put(0,0){\line(1,0){10}}
\put(0,10){\line(1,0){20}}
\put(0,20){\line(1,0){20}}
\put(0,0){\line(0,1){20}}
\put(10,0){\line(0,1){20}}
\put(20,10){\line(0,1){10}}
\put(2,11){1}
\put(12,11){2}
\put(2,1){3}
\end{picture}\ ,\
\begin{picture}(20,20)
\put(0,0){\line(1,0){10}}
\put(0,10){\line(1,0){20}}
\put(0,20){\line(1,0){20}}
\put(0,0){\line(0,1){20}}
\put(10,0){\line(0,1){20}}
\put(20,10){\line(0,1){10}}
\put(2,11){1}
\put(12,11){3}
\put(2,1){3}
\end{picture}\ ,\
\begin{picture}(20,20)
\put(0,0){\line(1,0){10}}
\put(0,10){\line(1,0){20}}
\put(0,20){\line(1,0){20}}
\put(0,0){\line(0,1){20}}
\put(10,0){\line(0,1){20}}
\put(20,10){\line(0,1){10}}
\put(2,11){3}
\put(12,11){1}
\put(2,1){2}
\end{picture}\ ,\
\begin{picture}(20,20)
\put(0,0){\line(1,0){10}}
\put(0,10){\line(1,0){20}}
\put(0,20){\line(1,0){20}}
\put(0,0){\line(0,1){20}}
\put(10,0){\line(0,1){20}}
\put(20,10){\line(0,1){10}}
\put(2,11){3}
\put(12,11){2}
\put(2,1){2}
\end{picture}\ ,\
\begin{picture}(20,20)
\put(0,0){\line(1,0){10}}
\put(0,10){\line(1,0){20}}
\put(0,20){\line(1,0){20}}
\put(0,0){\line(0,1){20}}
\put(10,0){\line(0,1){20}}
\put(20,10){\line(0,1){10}}
\put(2,11){3}
\put(12,11){3}
\put(2,1){2}
\end{picture}\ .
\]
\end{example}

\begin{example}
Let $\g=\msl_3$ and $\la=(m_1+m_2,m_2)$ be an arbitrary partition. Then the semistandard 
PBW-tableaux are of the form
\[
\begin{picture}(240,30)
\put(0,0){\line(1,0){120}}
\put(0,10){\line(1,0){240}}
\put(0,20){\line(1,0){240}}
\put(0,0){\line(0,1){20}}
\put(10,0){\line(0,1){20}}
\put(30,0){\line(0,1){20}}
\put(40,0){\line(0,1){20}}
\put(50,0){\line(0,1){20}}
\put(70,10){\line(0,1){10}}
\put(80,10){\line(0,1){10}}
\put(90,10){\line(0,1){10}}
\put(110,0){\line(0,1){20}}
\put(120,0){\line(0,1){20}}
\put(130,10){\line(0,1){10}}
\put(150,10){\line(0,1){10}}
\put(160,10){\line(0,1){10}}
\put(170,10){\line(0,1){10}}
\put(190,10){\line(0,1){10}}
\put(200,10){\line(0,1){10}}
\put(210,10){\line(0,1){10}}
\put(230,10){\line(0,1){10}}
\put(240,10){\line(0,1){10}}
\put(2,1){3}
\put(2,11){1}
\put(17,2){...}
\put(17,12){...}
\put(32,1){3}
\put(32,11){1}
\put(42,1){2}
\put(42,11){3}
\put(57,12){...}
\put(72,11){3}
\put(82,11){1}
\put(97,12){...}
\put(112,11){1}
\put(112,1){2}
\put(42,1){2}
\put(57,2){\ .\ .\ .\ .\ .\ .}
\put(122,11){3}
\put(137,12){...}
\put(152,11){3}
\put(162,11){2}
\put(177,12){...}
\put(192,11){2}
\put(202,11){1}
\put(217,12){...}
\put(232,11){1}
\end{picture},
\]
with the restriction $T_{1,m_2+1}\ne 3$ unless $T_{1,m_2}=3$. 
More precisely, 
\begin{multline*}
T_{1,1}=\dots =T_{1,k_1}=T_{1,k_1+k_2+1}=\dots=T_{1,m_2}=\\
T_{1,m_2+k_3+k_4+1}=\dots =T_{1,m_1+m_2}=1,
\end{multline*}
\[
T_{1,m_2+k_3+1}=\dots =T_{1,m_2+k_3+k_4}=T_{2,k_2+1}=\dots=T_{2,m_2}=2,\]
\begin{multline*}
T_{2,1}=\dots =T_{2,k_1}=T_{1,k_1+1}=\dots=T_{1,k_1+k_2}=\\ 
T_{1,m_2+1}=\dots =T_{m_2+1+k_3}=3,
\end{multline*}
$k_1, k_2, k_3,k_4\ge 0$ and $k_3=0$ unless $k_1+k_2=m_2$.
One can easily 
show that the number of such tableaux is equal to $(m_1+1)(m_2+1)(m_1+m_2+1)/2$,
which coincides with the dimension of $V_{m_1\om_1+m_2\om_2}$.
\end{example}

Given a PBW-tableau $T$, we define an element $D^a_T$ in the ring
$\bC[Z_{i,j}]$, $i<j$. First, assume that we have  a sequence of 
numbers $1\le i_1<\dots < i_d\le n$. Let $m\le d$ 
be a number such that $i_m\le d$ and $i_{m+1}>d$.
Then $D^a_{i_1,\dots,i_d}$ is the determinant of the matrix $D^a_{\al,\beta}$, $\al,\be=1,\dots,d$ 
\begin{equation}\label{Da}
D^a_{\al,\be}=
\begin{cases}
Z_{\al,i_\beta},\text{ if } \al\ne i_1,\dots,i_m, \beta>m,\\
0, \text{ if } \al\ne i_1,\dots,i_m, \beta\le m,\\
1,\ \text{ if }\al=i_1,\dots,i_m, \be=\al,\\
0,\ \text{ if } \al=i_1,\dots,i_m, \be\ne\al.
\end{cases}
\end{equation}
Now assume we are given a length $d$ column of pairwise distinct numbers.
Then there exists
a permutation $\sigma\in S_d$ such that $\sigma T$ has increasing entries.
We then define $D_T^a=(-1)^\sigma D^a_{\sigma T}$.
If $\la$ has more than one column, the element $D^a_T$ is the product of  
of the polynomials, corresponding to its columns.

\begin{prop}
For any relation $R^{k;a}_{L,J}$ one has $R^{k;a}_{L,J}(D^a_{i_1,\dots,i_d})=0$.
\end{prop}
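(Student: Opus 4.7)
My plan is to realize the degenerate minor $D^a_I$ as the leading coefficient of a one-parameter deformation of the classical minor $D_I$, and then to obtain the identity $R^{k;a}_{L,J}(D^a_\cdot)=0$ as the minimal-order part of the classical Pl\"ucker identity $R^k_{L,J}(D_\cdot)\equiv 0$, which holds because the usual minors of a generic matrix satisfy the usual Pl\"ucker relations \eqref{PR}.

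First I introduce the substitution $Z_{i,j}\mapsto Z_{i,j}(\veps)$ defined by $Z_{i,i}(\veps)=1$ and $Z_{i,j}(\veps)=\veps\,Z_{i,j}$ for $i\ne j$. Expanding $D_I=\det(Z_{\al,i_\be})_{\al,\be=1,\dots,d}$ as a sum over $\sigma\in S_d$, each monomial $\prod_\al Z(\veps)_{\al,i_{\sigma(\al)}}$ carries a factor $\veps^{\mathrm{dev}(\sigma)}$ with $\mathrm{dev}(\sigma)=\#\{\al:i_{\sigma(\al)}\ne\al\}$. The minimum of $\mathrm{dev}(\sigma)$ equals $d-m(I)$ with $m(I)=|I\cap\{1,\dots,d\}|$, attained precisely by those $\sigma$ which send every small element $\al=i_k\in I\cap\{1,\dots,d\}$ to its position $k$. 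Collecting those terms yields the determinant of the submatrix of $Z$ with row set $\{1,\dots,d\}\setminus I$ and column set $\{i_{m+1},\dots,i_d\}$, which, up to an overall sign depending only on $I$, coincides with the cofactor expansion of \eqref{Da} along its pinned rows. Hence
\[
D_I\bigl(Z(\veps)\bigr)=\veps^{d-m(I)}\,D^a_I+O\bigl(\veps^{d-m(I)+1}\bigr).
\]

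Next, the identity $R^k_{L,J}(D_\cdot)\equiv 0$ holds in $\bC[Z_{i,j}]$, and hence remains identically zero in $\bC[Z_{i,j}][\veps]$ after substitution $Z\to Z(\veps)$. Each summand $\pm D_{L^{(i)}}D_{J^{(i)}}$ of $R^k_{L,J}$ then begins at $\veps$-order $(p-m(L^{(i)}))+(q-m(J^{(i)}))$, which is exactly the total PBW degree $\deg L^{(i)}+\deg J^{(i)}$. The bookkeeping carried out in Remark~\ref{mindeg} shows that the minimum of these orders is attained precisely on the summands whose swap indices satisfy condition~\eqref{new}, i.e.\ on precisely the terms retained in $R^{k;a}_{L,J}$. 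Equating the coefficient of the minimal power of $\veps$ to zero then produces $R^{k;a}_{L,J}(D^a_\cdot)=0$.

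The main technical point I expect to be the main obstacle is to verify that the universal sign relating the leading coefficient of $D_I(Z(\veps))$ to the determinant~\eqref{Da} is coherent across all pairs $(L^{(i)},J^{(i)})$ contributing to $R^{k;a}_{L,J}$, so that a common sign factor drops out of the limiting identity. This is a cofactor-expansion computation for the pinned rows of~\eqref{Da}, and it is compatible with the antisymmetry convention $D^a_T=(-1)^\sigma D^a_{\sigma T}$ extending $D^a$ to non-increasing columns, which mirrors the antisymmetry $X_{j_{\sigma(1)},\dots,j_{\sigma(d)}}=(-1)^\sigma X_{j_1,\dots,j_d}$ already built into both the classical and the degenerate Pl\"ucker relations.
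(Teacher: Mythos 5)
Your argument is correct, and at its core it is the same degeneration argument the paper uses: the paper proves this proposition by observing that the $D^a_{i_1,\dots,i_d}$ are the coordinates of $\exp\bigl(\sum_{i<j}Z_{i,j}f_{i,j-1}\bigr)[v^a_{1,\dots,d}]$ and then invoking Lemma \ref{cell}, whose proof is exactly your extraction step — the classical coordinates satisfy the classical relations, the degenerate coordinates are their lowest-degree parts (the degree in the orbit parameters $c_{i,j}$ playing the role of your $\veps$), and by Remark \ref{mindeg} the minimal-degree component of the classical identity is precisely $R^{k;a}_{L,J}$ evaluated at the degenerate coordinates. What you do differently is to stay entirely inside $\bC[Z_{i,j}][\veps]$ by rescaling the off-diagonal entries of the generic matrix; this buys a completely elementary, representation-theory-free proof, whereas the paper's route through the exponential identity is the one it needs anyway to relate $\tilde Q^a$ to the open cell of $\Fl^a$. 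Finally, the sign coherence you single out as the main obstacle is in fact vacuous: the permutations contributing to the $\veps^{d-m(I)}$-coefficient of $D_I\bigl(Z(\veps)\bigr)$ are exactly those with $\sigma(i_k)=k$ for the pinned values $i_k\le d$, and these are exactly the permutations surviving in the determinant \eqref{Da} (whose pinned row $\al=i_k$ carries its $1$ in the column $\be=k$, i.e.\ the column labelled by $i_\be=\al$), with identical signs; hence the leading coefficient is literally $D^a_I$, with no $I$-dependent sign at all, and nothing needs to be matched across the terms of $R^{k;a}_{L,J}$.
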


We will prove the following proposition:
\begin{prop}\label{D}
Fix a partition $\la$.
The polynomials $D^a_T$,  $T\in ST^a_\la$, are linearly independent
\end{prop}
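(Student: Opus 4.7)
The plan is to argue by leading monomials in $\bC[Z_{i,j}]$. First, I unpack $D^a_J$ for a single column $J = \{i_1 < \cdots < i_d\}$. Expanding the matrix \eqref{Da} along the rows indexed by $\alpha \in J \cap [d]$ (each of which is a coordinate vector) exhibits $D^a_J$ as, up to an overall sign, the minor $\det Z_{R(J),\,C(J)}$, where $R(J) = [d] \setminus (J \cap [d])$ and $C(J) = J \setminus [d]$. Consequently, for a PBW-tableau $T$ of shape $\lambda$ with columns $J_1, J_2, \ldots$, the polynomial $D^a_T$ is simply the product over columns of these minors of $Z$.

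Next, I fix a degree-compatible monomial order on $\bC[Z_{i,j}]$, for instance the pure lexicographic order with $Z_{i,j} > Z_{i',j'}$ when $(i,j)$ precedes $(i',j')$ lexicographically. Under this order the leading monomial of $\det Z_{R,C}$ is the main-diagonal term $m(R,C) = \prod_{k} Z_{r_k, c_k}$, where $r_1 < r_2 < \cdots$ and $c_1 < c_2 < \cdots$ enumerate $R$ and $C$ in increasing order. Put $m_T = \prod_{j} m\bigl(R(J_j), C(J_j)\bigr)$; this is the leading monomial of $D^a_T$ (leading monomials of products coincide with products of leading monomials in a degree-compatible order).

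The essential combinatorial step is the injectivity of $T \mapsto m_T$ on $ST^a_\lambda$. Each factor $m(R, C)$ records the sets $R$ and $C$, and hence reconstructs the column $J = ([d] \setminus R) \cup C$; so $m_T$ determines the multiset $\{J_1, J_2, \ldots\}$ of columns of $T$. What then remains is to verify that a semistandard PBW-tableau is uniquely determined by this multiset, which reduces to showing that for two distinct PBW-columns $A \ne B$ of equal length, at most one of the orderings $(A, B)$ and $(B, A)$ satisfies the condition of Definition \ref{sst}. This is the PBW analogue of the classical fact that ordinary semistandard Young tableaux are determined by their column multisets, and it can be extracted from Definition \ref{sst} by analyzing the suprema $\max_{i_1 \ge i} T_{i_1, j-1}$ row by row, starting from $i = \mu_j$ downward.

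Once this injectivity is in hand, linear independence is standard: in any relation $\sum_T c_T D^a_T = 0$, pick $T_0$ with $m_{T_0}$ maximal among $\{T : c_T \ne 0\}$; since $m_{T_0}$ strictly exceeds every other $m_{T'}$, no $D^a_{T'}$ with $T' \ne T_0$ contributes to the monomial $m_{T_0}$, forcing $c_{T_0} = 0$, a contradiction. The main obstacle is the combinatorial step: the semistandard condition is asymmetric and involves column-wise suprema rather than pointwise inequalities, so a careful case analysis is needed to pin down uniqueness of the ordering of same-length columns, whereas the remaining ingredients (computation of $D^a_J$ as a minor and the leading-monomial machinery) are routine.
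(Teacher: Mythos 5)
Your reduction of $D^a_T$ to a product of minors $\det Z_{R(J_j),C(J_j)}$ and the statement that the lex-leading term of each such minor is its main diagonal, with $\mathrm{LM}$ multiplicative over products, are all fine. The genuine gap is the key injectivity step: from the single monomial $m_T=\prod_j m\bigl(R(J_j),C(J_j)\bigr)$ you claim to ``reconstruct the column $J$'' for each factor and hence the multiset of columns, but a product of monomials does not remember its factorization. Concretely, for columns of length $2$ (so shape $\lambda=(2,2)$, $n\ge 5$) one has
\[
m\bigl(\{1,2\},\{3,4\}\bigr)\cdot m\bigl(\{2\},\{5\}\bigr)=Z_{1,3}Z_{2,4}\cdot Z_{2,5}
= Z_{1,3}Z_{2,5}\cdot Z_{2,4}=m\bigl(\{1,2\},\{3,5\}\bigr)\cdot m\bigl(\{2\},\{4\}\bigr),
\]
so the column multisets $\{\{3,4\},\{1,5\}\}$ and $\{\{3,5\},\{1,4\}\}$ produce the same diagonal monomial. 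Thus the inference ``$m_T$ determines the columns'' is invalid as stated; whether $T\mapsto m_T$ is injective on $ST^a_\lambda$ is exactly what has to be proved, and your argument does not prove it (your second step, that a semistandard PBW-tableau is determined by its column multiset, is also only asserted, and unlike the classical case it does not follow from a pointwise column comparison).

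For comparison, the paper avoids this by choosing a different distinguished monomial: it reads the tableau directly, taking $M^a_T=\prod_{i,j:\,T_{i,j}\ne i}Z_{i,T_{i,j}}$, i.e.\ the \emph{anti-}diagonal term of each column minor in \eqref{Da}. The exponent of $Z_{i,l+1}$ in $M^a_T$ is the number of boxes in row $i$ filled with $l+1$, which is precisely the Vinberg datum $s_{i,l}$ of $\bs=\psi^{-1}(T)$; injectivity of $T\mapsto M^a_T$ on $ST^a_\lambda$ is then immediate from the bijection of Proposition \ref{bij}, with no need to recover a factorization. The price is that $M^a_T$ is not a term-order leading monomial, so instead of the standard leading-term argument the paper proves a triangularity statement with respect to an explicit order on configurations ($\bs>{\bf t}$ implies $M^a_\bs$ does not occur in $D^a_{\psi({\bf t})}$). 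If you want to salvage your approach, you must either prove directly that $m_T$ is injective on $ST^a_\lambda$ (the hard content), or switch to a row-recording monomial such as $M^a_T$ and supply the corresponding triangularity/ordering argument.
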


For a tableau $T$ of shape $\la$ we define a n element $X^a_T\in Q^a(d_1,\dots,d_s)$ by the formula
\[
X^a_T=\prod_{j=1}^{\la_1} X^a_{T_{1,j}}\dots X^a_{T_{\mu_j,j}}.
\]
Proposition \ref{D} allows to prove the following theorem. 

\begin{thm}
The elements $X^a_T$, $T\in ST^a_\la$, 
form a basis of $Q^a(d_1,\dots,d_s)$. 
\end{thm}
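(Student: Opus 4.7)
The plan is to split the claim into multi-homogeneous components and then combine a linear independence argument with a dimension count.

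First I would observe that $Q^a(d_1,\dots,d_s)$ is naturally multi-graded by $P^+(d_1,\dots,d_s)$: every generator $R^{k;a}_{L,J}$ of $I^a(d_1,\dots,d_s)$ is bi-homogeneous of fixed degree $\om_p+\om_q$, so the quotient inherits the decomposition $Q^a(d_1,\dots,d_s)=\bigoplus_{\la\in P^+(d_1,\dots,d_s)} Q^a_\la$. Each $X^a_T$ with $T\in ST^a_\la$ has multi-degree exactly $\la$, so it suffices to prove that $\{X^a_T:T\in ST^a_\la\}$ is a basis of $Q^a_\la$ for every fixed $\la$.

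For the cardinality, I would invoke the theorem identifying $Q^a(d_1,\dots,d_s)$ with $\bigoplus_\la (V^a_\la)^*$: this gives $\dim Q^a_\la=\dim V^a_\la=\dim V_\la$. On the other hand, the proposition stated immediately after Definition~\ref{sst} asserts $|ST^a_\la|=\dim V_\la$. Hence the proposed basis has the correct cardinality and it remains only to prove linear independence of the $X^a_T$ in $Q^a_\la$.

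For linear independence I would pass to the polynomial realization. The proposition preceding Proposition~\ref{D} states that $R^{k;a}_{L,J}(D^a_{\bullet})=0$ for every generator of $I^a(d_1,\dots,d_s)$, so the assignment $X^a_{i_1,\dots,i_d}\mapsto D^a_{i_1,\dots,i_d}$ descends to a well-defined ring homomorphism
\[
\phi\colon Q^a(d_1,\dots,d_s)\longrightarrow \bC[Z_{i,j}]_{1\le i,j\le n}.
\]
Since both $X^a_T$ and $D^a_T$ are defined columnwise with the same sign convention for the permutation that reorders each column into increasing entries, one has $\phi(X^a_T)=D^a_T$. Proposition~\ref{D} asserts that $\{D^a_T:T\in ST^a_\la\}$ is linearly independent in $\bC[Z_{i,j}]$, which pulls back through $\phi$ to linear independence of $\{X^a_T:T\in ST^a_\la\}$ in $Q^a_\la$. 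Combined with the matching cardinality this yields the basis property, and taking the direct sum over $\la$ gives the theorem.

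The genuine obstacle in this circle of ideas is Proposition~\ref{D} itself: establishing that the degenerate determinants $D^a_T$ attached to distinct semistandard PBW-tableaux are linearly independent requires a real combinatorial argument, presumably by identifying a suitable leading term of each $D^a_T$ via a monomial order or a Vinberg-pattern bijection (as hinted at in the remark following Definition~\ref{t}). Once that combinatorial fact is available, the present theorem follows formally from the multi-grading, the isomorphism $Q^a_\la\simeq (V^a_\la)^*$, and the cardinality identity $|ST^a_\la|=\dim V_\la$.
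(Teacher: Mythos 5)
There is a genuine gap, and it sits exactly where you declare the argument ``formal'': you obtain the dimension count $\dim Q^a_\la=\dim V_\la$ by invoking the theorem identifying $Q^a(d_1,\dots,d_s)$ with $\bigoplus_\la (V^a_\la)^*$. In the paper that identification is not an independent input: it is proved simultaneously with (indeed, as a corollary of) the basis theorem you are asked to prove. What is available beforehand is only the surjection $Q^a\to\bar Q^a=\bigoplus_\la(V_\la^a)^*$ of Corollary \ref{bar}, and a surjection gives the inequality in the wrong direction, $\dim Q^a_\la\ge\dim V_\la$. So your appeal to that theorem is circular within the paper's logical structure, and without it your linearly independent family $\{X^a_T\}$ of cardinality $\dim V_\la$ need not span $Q^a_\la$.

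The missing idea is the spanning (straightening) argument, which the paper supplies as Lemma \ref{XT}: one orders tableaux and shows that for any non-semistandard PBW-tableau $T$ a suitable degenerate Pl\"ucker relation $R^{k;a}_{L,J}$ applied to two adjacent columns rewrites $X^a_T$ as a combination of $X^a_{T'}$ with $T'$ strictly smaller, so the semistandard $X^a_T$ span each $Q^a_\la$. Combined with Proposition \ref{bij} this yields the upper bound $\dim Q^a_\la\le \#ST^a_\la=\dim V_\la$; your map $\phi:X^a_J\mapsto D^a_J$ together with Proposition \ref{D} (the paper's Lemma \ref{linind}) then gives linear independence, and the two bounds force all the surjections $Q^a\to\bar Q^a$, $Q^a\to\tilde Q^a$ to be isomorphisms. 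That is how the paper derives, rather than assumes, the isomorphism $Q^a\simeq\bigoplus_\la(V^a_\la)^*$. Your identification of Proposition \ref{D} as a genuine combinatorial obstacle, and your mechanism for transporting it through $\phi$, are correct and agree with the paper; but the straightening lemma is a second, independent obstacle that your plan omits.
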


Recall the notation
\[
P^+(d_1,\dots,d_s)=\{m_1\om_{d_1}+\dots +m_s\om_{d_s},\ m_i\in \bZ_{\ge 0}\}. 
\]

\begin{cor}
$Q^a(d_1,\dots,d_s)\simeq \bigoplus_{\la\in P^+(d_1,\dots,d_s)} (V_\la^a)^*$. 
\end{cor}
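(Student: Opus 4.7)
The plan is to construct a surjective graded algebra homomorphism $\phi: Q^a(d_1,\ldots,d_s) \to \bigoplus_{\la \in P^+(d_1,\ldots,d_s)} (V_\la^a)^*$ and then upgrade it to an isomorphism by a dimension count coming from the preceding basis theorem. By Lemma \ref{emb}, dualizing the embeddings $V_{\la+\mu}^a \hookrightarrow V_\la^a \T V_\mu^a$ yields surjections $(V_\la^a)^* \T (V_\mu^a)^* \twoheadrightarrow (V_{\la+\mu}^a)^*$ that give $\bigoplus_\la (V_\la^a)^*$ the structure of a commutative $P^+(d_1,\ldots,d_s)$-graded algebra, generated in degrees $\om_{d_i}$ by the components $(V_{\om_{d_i}}^a)^*$. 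Assigning $X^a_{j_1,\ldots,j_d} \mapsto (v^a_{j_1,\ldots,j_d})^*$ therefore extends multiplicatively to a surjective graded algebra map $\tilde\phi$ from the polynomial ring $\bC[X^a_J]$ onto this target.

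Next I would verify that every degenerate Pl\"ucker relation $R^{k;a}_{L,J}$ lies in $\ker \tilde\phi$, so that $\tilde\phi$ descends to a surjection $\phi$ from $Q^a$. By Remark \ref{mindeg}, $R^{k;a}_{L,J}$ is the sum of the terms of minimal total PBW-degree in the classical Pl\"ucker relation $R^k_{L,J}$. Since $R^k_{L,J}$ vanishes on the image of $V_{\la+\mu}$ in $V_\la \T V_\mu$, passing to the PBW-associated graded identifies the image of $V_{\la+\mu}^a$ inside $V_\la^a \T V_\mu^a$ with the leading PBW-degree components of the classical image; the relations cutting out this image are exactly the minimal-degree terms of $R^k_{L,J}$, which is by definition $R^{k;a}_{L,J}$, so the latter maps to zero under $\tilde\phi$.

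Finally, $\phi$ is a surjection of $P^+(d_1,\ldots,d_s)$-graded algebras, so it decomposes into graded pieces $\phi_\la: Q^a_\la \twoheadrightarrow (V_\la^a)^*$. By the preceding basis theorem, $\dim Q^a_\la = |ST^a_\la|$, and the proposition counting PBW-tableaux gives $|ST^a_\la| = \dim V_\la = \dim (V_\la^a)^*$. A surjection between finite-dimensional spaces of equal dimension is an isomorphism, which proves the corollary. The main obstacle is the second step: rigorously tracking which terms of the classical Pl\"ucker relation survive after passing to the PBW-graded quotient of the tensor product. This requires careful bookkeeping of filtration degrees on each side of the embedding $V_{\la+\mu}^a \hookrightarrow V_\la^a \T V_\mu^a$, so as to confirm that the discarded higher-degree terms drop into deeper pieces of the filtration on $V_\la \T V_\mu$ and hence do not contribute to the associated-graded relation, leaving precisely the relations of Remark \ref{mindeg}.
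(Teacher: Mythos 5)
Your proposal is correct and follows essentially the same route as the paper: the surjection $Q^a\twoheadrightarrow\bigoplus_\la (V_\la^a)^*$ is obtained exactly as in the paper's Lemma (the degenerate relations $R^{k;a}_{L,J}$ are the lowest-PBW-degree parts of the classical relations, which must vanish in the PBW-graded algebra $\bar Q^a$), and the upgrade to an isomorphism is the paper's dimension count $\dim Q^a_\la=\#ST^a_\la=\dim V_\la=\dim(V_\la^a)^*$ using the basis theorem and the tableau-counting proposition. Your flagged "main obstacle" is handled in the paper by the short grading argument just described, so no further bookkeeping is needed.
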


\section{Coordinate rings and PBW-tableaux}
In this section we start proving the results from the previous section.
Throughout the section we fix a sequence of integers $1\le d_1 <\dots < d_s <n$
and omit this sequence in the notations writing
$Q^a$, $I^a$ etc instead of $Q^a(d_1,\dots,d_s)$, $I^a(d_1,\dots,d_s)$ etc. 

Consider the spaces
\[
\bar Q^a=\bigoplus_{\la\in P^+(d_1,\dots,d_s)} (V_\la^a)^*.
\]
This space can be endowed with the structure of an algebra with the multiplication
$(V_\la^a)^*\T (V_\mu^a)^*\to (V_{\la+\mu}^a)^*$ coming from the embedding
$V_{\la+\mu}^a\hk V_\la^a\T V_\mu^a$. 
We note that $\bar Q^a$ is PBW-graded, since the embeddings $V^a_{\la+\mu}\hk V^a_\la\T V^a_\mu$
is compatible with the PBW-grading.
The algebra
$\bar Q^a$ is generated by the subspaces $(V_{\om_{d_i}}^a)^*$. Recall
the elements $X^a_{i_1,\dots,i_d}\in (V_{\om_d}^a)^*$.

\begin{lem}
The elements $X^a_{i_1,\dots,i_{d_l}}$, $l=1,\dots,s$ satisfy the relations $R^{k;a}_{L,J}$ in
$\bar Q^a$.
\end{lem}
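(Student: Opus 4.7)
The plan is to realize $\bar Q^a$ as the associated graded of the classical coordinate ring $\bar Q=\bigoplus_{\la\in P^+(d_1,\dots,d_s)} V_\la^*$ with respect to a decreasing filtration dual to the PBW filtration, and then obtain the relations $R^{k;a}_{L,J}=0$ in $\bar Q^a$ by extracting the minimal-PBW-degree components of the classical Pl\"ucker relations $R^k_{L,J}=0$ in $\bar Q$.

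First I would equip each $V_\la^*$ with the decreasing filtration $F^s V_\la^*=\{\phi : \phi|_{F_{s-1}V_\la}=0\}$, so that the canonical map $\gr V_\la^*\to (V_\la^a)^*$ is an isomorphism of graded vector spaces. Under this identification the basis element $X_I\in V_{\om_d}^*$ dual to $v_I$ has filtration degree exactly $\deg I$ (as defined in \eqref{PBWdegree}), and its symbol in $\gr^{\deg I}V_{\om_d}^*$ is $X^a_I$. The next step is to show that the classical multiplication $V_\la^*\T V_\mu^*\to V_{\la+\mu}^*$ satisfies $F^sV_\la^*\cdot F^tV_\mu^*\subset F^{s+t}V_{\la+\mu}^*$; by duality this is the assertion that the embedding $V_{\la+\mu}\hk V_\la\T V_\mu$ carries $F_r V_{\la+\mu}$ into $\sum_{r_1+r_2=r} F_{r_1}V_\la\T F_{r_2}V_\mu$. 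The latter follows from $v_{\la+\mu}\mapsto v_\la\T v_\mu$ together with the Leibniz expansion of $f_{\al_1}\cdots f_{\al_k}(v_\la\T v_\mu)$. Passing to associated graded therefore produces an algebra $\gr\bar Q$, and the graded-piece identifications above assemble into an algebra isomorphism $\gr\bar Q\simeq\bar Q^a$, because the associated graded of $V_{\la+\mu}\hk V_\la\T V_\mu$ is precisely the embedding $V^a_{\la+\mu}\hk V_\la^a\T V_\mu^a$ that defines the multiplication on $\bar Q^a$ via Lemma \ref{emb}.

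Finally I would apply the classical relation $R^k_{L,J}=\sum_i \pm X_{L^{(i)}}X_{J^{(i)}}=0$ in $V_{\om_p+\om_q}^*$. Setting $s_i=\deg L^{(i)}+\deg J^{(i)}$ and $s_{\min}=\min_i s_i$, each term $X_{L^{(i)}}X_{J^{(i)}}$ lies in $F^{s_i}V_{\om_p+\om_q}^*$ by Step~2; the terms with $s_i>s_{\min}$ lie in $F^{s_{\min}+1}$, so the sum of the remaining minimal-degree terms lies in $F^{s_{\min}+1}$ as well. Its symbol in $\gr^{s_{\min}}V_{\om_p+\om_q}^*=(V^a_{\om_p+\om_q})^*_{s_{\min}}\subset\bar Q^a$ therefore vanishes, and by Remark \ref{mindeg} this symbol is exactly $R^{k;a}_{L,J}$. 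The main delicate point is the identification $\gr\bar Q\simeq\bar Q^a$ in Step~2, specifically verifying that the induced PBW filtration on $V_{\la+\mu}$ as a subspace of $V_\la\T V_\mu$ coincides with its intrinsic PBW filtration (so that taking associated graded does not enlarge or shrink the subspace); this is essentially Lemma \ref{emb} in its filtered form, and would be handled by comparing Hilbert series of both sides, which agree because Lemma \ref{emb} already gives an isomorphism after passing to the graded picture.
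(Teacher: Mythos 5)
Your proposal is correct and follows essentially the same route as the paper: the paper's (much terser) proof likewise observes that the classical Plücker relations hold in $\bigoplus_\la V_\la^*$, that $R^{k;a}_{L,J}$ consists of the minimal PBW-degree terms of $R^k_{L,J}$ (Remark \ref{mindeg}), and that $\bar Q^a$ is the PBW-graded (associated graded) version of the classical ring, with the compatibility of the multiplication coming from Lemma \ref{emb}. Your write-up simply makes explicit the dual filtration on $V_\la^*$, its multiplicativity, and the identification $\gr\bar Q\simeq\bar Q^a$ that the paper leaves implicit.
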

\begin{proof}
The elements $X_{i_1,\dots,i_{d_l}}$, $l=1,\dots,s$ do satisfy relations $R^{k}_{L,J}$ 
in the classical algebra $Q=\bigoplus_{\la\in P^+} V_\la^*$ and the relations
$R^{k;a}_{L,J}$ are simply the lowest degree terms with respect to the PBW grading
(see Remark \ref{mindeg}). 
Since the algebra $\bar Q^a$ is PBW graded, our lemma follows. 
\end{proof}

Recall the algebra $Q^a=\bC[X^a_{i_1,\dots,i_d}]/I^a$, $d=d_1,\dots,d_s$, 
$1\le i_1<\dots <i_d\le n$.
\begin{cor}\label{bar}
There is a surjection of algebras $Q^a\to\bar Q^a$.
\end{cor}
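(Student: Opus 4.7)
The plan is to exhibit the surjection as the natural map induced by sending the generator $X^a_{i_1,\dots,i_d}$ of $Q^a$ to the element of the same name living in $(V^a_{\om_d})^*\subset \bar Q^a$. The key observation is that this is essentially a formal consequence of the preceding lemma together with Lemma~\ref{emb}.

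First I would verify that $\bar Q^a$ is generated as a $\bC$-algebra by the fundamental components $(V^a_{\om_{d_l}})^*$, $l=1,\dots,s$. For any $\la,\mu\in P^+(d_1,\dots,d_s)$, Lemma~\ref{emb} provides an embedding $V^a_{\la+\mu}\hk V^a_\la\T V^a_\mu$, which upon dualising becomes a surjection $(V^a_\la)^*\T(V^a_\mu)^*\to (V^a_{\la+\mu})^*$ — and this is precisely the multiplication in $\bar Q^a$. Iterating, every homogeneous component $(V^a_\la)^*$ with $\la\in P^+(d_1,\dots,d_s)$ is spanned by products of elements drawn from the pieces $(V^a_{\om_{d_l}})^*$. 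Since the elements $X^a_{i_1,\dots,i_d}$ with $1\le i_1<\dots<i_d\le n$ form a basis of $(V^a_{\om_d})^*$, it follows that these elements generate $\bar Q^a$ as an algebra.

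Next I would invoke the universal property of the polynomial ring. The assignment $X^a_{i_1,\dots,i_d}\mapsto X^a_{i_1,\dots,i_d}\in (V^a_{\om_d})^*$ extends uniquely to an algebra homomorphism $\bC[X^a_{i_1,\dots,i_d}]\to\bar Q^a$, and by the generation statement just established this homomorphism is surjective. By the preceding lemma, every generator $R^{k;a}_{L,J}$ of the ideal $I^a$ is sent to zero in $\bar Q^a$. Consequently the map descends through the quotient to yield the desired algebra surjection $Q^a=\bC[X^a_{i_1,\dots,i_d}]/I^a\twoheadrightarrow \bar Q^a$.

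No serious obstacle is expected in this step; it is a bookkeeping corollary once the relations-in-$\bar Q^a$ lemma and Lemma~\ref{emb} are in place. The genuinely hard work lies in the reverse direction, i.e.\ showing that this surjection is actually an isomorphism, which is pursued subsequently via the construction of the semistandard PBW-tableaux basis and the linear independence statement of Proposition~\ref{D}.
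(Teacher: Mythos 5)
Your argument is correct and matches the paper's: the corollary is stated there as an immediate consequence of the preceding lemma, with the generation of $\bar Q^a$ by the subspaces $(V^a_{\om_{d_l}})^*$ (via the surjections dual to the embeddings of Lemma \ref{emb}) taken as the other ingredient, exactly as you spell out. Your write-up just makes explicit the bookkeeping that the paper leaves implicit.
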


We will prove that this surjection is an isomorphism. 
Consider the decomposition
\[
Q^a=\bigoplus_{\la\in P^+(d_1,\dots,d_s)} Q^a_\la,
\]
where for $\la=m_1\om_{d_1} + \dots + m_s\om_{d_s}$ the subspace 
$Q^a_\la$ is spanned by the monomials $X^a_{J^{(1)}}\dots X^a_{J^{(r)}}$, where
\[
\#\{i:\ J^{(i)} \text{ has } d_l \text{ entries}\}=m_l, \ l=1,\dots,s.
\]
Our goal is to show that $\dim Q^a_\la=\dim V_\la$.

Recall  definitions \ref{t}, \ref{sst} of the PBW-tableau and of the 
semistandard PBW-tableau.

\begin{lem}\label{>}
Let $T$ be a semistandard PBW-tableau. Then:
\begin{itemize}\label{prop}
\item If $j_1<j_2$ and $T_{i,j_1}=T_{i,j_2}\ne i$, then $T_{i,j}=T_{i,j_1}$ for all 
$j_1\le j\le j_2$.
\item If $i_1\le i_2$, $j_1\le j_2$ and $T_{i_1,j_1}\ne i_1$,
$T_{i_2,j_2}\ne i_2$, then $T_{i_1,j_1}\ge T_{i_2,j_2}$. If in addition
$i_1<i_2$, then $T_{i_1,j_1}> T_{i_2,j_2}$.  
\end{itemize}
\end{lem}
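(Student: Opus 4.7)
The plan is to establish the second bullet first by induction on $j_2 - j_1$, and then to deduce the first bullet by iterating the same step. The base case $j_1 = j_2$ of the second bullet is immediate from the PBW axiom: if $i_1 < i_2$ and $T_{i_1, j} \ne i_1$, then $T_{i_1, j} > T_{i_2, j}$, which handles both the strict and weak versions.

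For the inductive step with $j_1 < j_2$, set $a = T_{i_2, j_2}$ (so $a > i_2$) and apply the semistandard axiom at $(i_2, j_2)$ to obtain $i_* \ge i_2$ with $T_{i_*, j_2-1} \ge a$. The key observation is that $T_{i_*, j_2-1}$ must itself be non-fixed. Indeed, if instead $T_{i_*, j_2-1} = i_*$, then $i_* \ge a > i_2$ forces $i_* > i_2$; now the PBW condition in column $j_2$, applied to the non-fixed entry $T_{i_2, j_2} = a$ with $i_2 < i_*$, gives $T_{i_*, j_2} < a$, while $T_{i_*, j_2} \ge i_* \ge a$, a contradiction. Thus $(i_1, j_1)$ and $(i_*, j_2-1)$ are both non-fixed with $i_1 \le i_2 \le i_*$ and $j_1 \le j_2 - 1$, and the inductive hypothesis yields $T_{i_1, j_1} \ge T_{i_*, j_2-1} \ge a$. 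When $i_1 < i_2$ one has $i_1 < i_*$, so the strict part of the inductive hypothesis upgrades this to $T_{i_1, j_1} > a$.

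For the first bullet, set $a = T_{i, j_1} = T_{i, j_2}$ with $a \ne i$. I iterate the key observation: starting from $(r_0, j_2) := (i, j_2)$, each non-fixed cell $(r_{k-1}, j_2-k+1)$ with value $\ge a$ produces, by semistandardness and the key observation above, a non-fixed cell $(r_k, j_2-k)$ with $r_k \ge r_{k-1}$ and $T_{r_k, j_2-k} \ge a$. After $j_2 - j_1$ steps I reach the non-fixed cell $(r_{j_2-j_1}, j_1)$ with value $\ge a$ and $r_{j_2-j_1} \ge i$. Applying the (now proven) second bullet to $(i, j_1)$ and $(r_{j_2-j_1}, j_1)$ in the same column forces $r_{j_2-j_1} = i$, for otherwise $a = T_{i, j_1} > T_{r_{j_2-j_1}, j_1} \ge a$. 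The monotonicity $i = r_0 \le r_1 \le \cdots \le r_{j_2-j_1} = i$ then collapses all intermediate $r_k$ to $i$, so $T_{i, j}$ is non-fixed with value $\ge a$ for every $j \in [j_1, j_2]$. Sandwiching via the second bullet, $a = T_{i, j_1} \ge T_{i, j} \ge T_{i, j_2} = a$, forces $T_{i, j} = a$.

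The hard part will be the key observation embedded in the inductive step. Without it the semistandard condition could, a priori, yield only a fixed cell in the preceding column, which would carry no useful numerical information and break the induction. The content of the lemma is really that the PBW inequality in column $j_2$ rules out the simultaneous equalities $T_{i_*, j_2-1} = i_*$ and $i_* \ge a$ at a row $i_* > i_2$; once this incompatibility is in place, both bullets reduce to bookkeeping.
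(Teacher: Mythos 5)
Your ``key observation'' --- that the witness $i_*\ge i_2$ with $T_{i_*,j_2-1}\ge a$ supplied by the semistandard condition must itself be a non-fixed entry --- is false, and the argument you give for it is invalid because it refers to a box that is not in the diagram. Since $T_{i_2,j_2}=a\ne i_2$, the first axiom of a PBW-tableau forces $a>\mu_{j_2}$, hence $i_*\ge a>\mu_{j_2}$, so row $i_*$ does not reach column $j_2$ and the inequality $T_{i_*,j_2}<a$ you invoke concerns a nonexistent cell. A concrete counterexample to the observation is the shape $(2,1)$ semistandard PBW-tableau with $T_{1,1}=1$, $T_{2,1}=2$, $T_{1,2}=2$ (it appears in the paper's own list for $\mathfrak{sl}_3$, $\lambda=(2,1)$): at $(i_2,j_2)=(1,2)$ with $a=2$ the only witness in column $1$ is the fixed box $(2,1)$. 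One can also arrange this with all hypotheses of your inductive step present (take $\mu_1=\mu_2=5$, $\mu_3=1$, column $2$ entirely fixed, $T_{1,3}=3$, $T_{1,1}=6$, the other entries of column $1$ fixed), so there may be no non-fixed cell in column $j_2-1$ to which the inductive hypothesis applies; the same defect propagates into your chain argument for the first bullet, whose steps also rely on the observation.

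The lemma itself is fine and your strategy is repairable, because non-fixedness of the intermediate cells is never needed. The paper's route: iterate the semistandard condition from $(i_2,j_2)$ to produce boxes $(r_k,j_2-k)$ with $i_2=r_0\le r_1\le\cdots\le r_{j_2-j_1}$ and weakly increasing entries $\ge a$ (fixed or not), then compare the last box with the non-fixed box $(i_1,j_1)$ using only the column condition: if $r_{j_2-j_1}>i_1$ then $T_{i_1,j_1}>T_{r_{j_2-j_1},j_1}\ge a$, while $r_{j_2-j_1}=i_1$ (possible only when $i_1=i_2$) gives $T_{i_1,j_1}\ge a$ directly; the first bullet is proved by the same chain plus column-strictness in column $j_1$. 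Alternatively, patch your dichotomy: if the witness is fixed, then $a\le i_*\le\mu_{j_2-1}\le\mu_{j_1}$, and since $T_{i_1,j_1}\ne i_1$ the first axiom gives $T_{i_1,j_1}>\mu_{j_1}\ge a$, which closes that branch of the induction immediately and, in the setting of the first bullet, contradicts $T_{i,j_1}=a$, so the fixed-witness case is harmless there as well.
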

\begin{proof}
Let us prove the first statement. Assume that there exists $j$, $j_1<j<j_2$ such that
$T_{i,j}\ne T_{i,j_1}$. Then $T_{i,j}=i$, because otherwise from the definition of 
a PBW-tableau we have $T_{i,j_1}\ge T_{i,j}\ge T_{i,j_2}$ and one of the inequalities is
strict, which is a contradiction.
So $T_{i,j}<T_{i,j_2}$. By definition of a semistandard PBW-tableau, there
exists a sequence $i_1\le i_2\le\dots\le i_{j_2-j_1}$ such that
$i_1\ge i$ and 
\[
T_{i,j_2}\le T_{i_1,j_2-1}\le T_{i_2,j_2-2}\le\dots\le T_{i_{j_2-j_1},j_1}.
\]  
Since $T_{i,j}<T_{i,j_2}$, we have $i_{j_2-j}>i$ and hence $i_{j_2-j_1}>i$.
This gives $$T_{i,j_2}\le T_{i_{j_2-j_1},j_1}<T_{i,j_1},$$ which gives a contradiction.

The second statement of the lemma can be proved similarly. 
\end{proof}

\begin{cor}\label{tab}
Let $T$ be a semistandard PBW-tableau. For $l>k$ let $A_{col}(k,l)$ be the minimal number
such that $T_{k,A_{col}(k,l)+1}=l$. Then
\begin{itemize}
\item there exists $i\ge k$ such that $T_{i,A_{col}(k,l)}\ge l$;
\item for $j>A_{col}(k,l)$ and $i>k$ $T_{i,j}<l$.
\end{itemize}
\end{cor}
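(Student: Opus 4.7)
The plan is to derive both bullets directly from the two structural properties (the semistandard condition from Definition~\ref{sst} and the strict-decrease property from the PBW-tableau Definition~\ref{t}), using the key fact that the entry $l=T_{k,A_{col}(k,l)+1}$ is strictly greater than the row index $k$, and hence exceeds the column length $\mu_{A_{col}(k,l)+1}$.

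For the first bullet, I would simply invoke the semistandard condition from Definition~\ref{sst} at the box $(k,\,A_{col}(k,l)+1)$: since its entry equals $l$, there exists $i\ge k$ with
\[
T_{i,\,A_{col}(k,l)} \;\ge\; T_{k,\,A_{col}(k,l)+1} \;=\; l,
\]
which is exactly the claim. (The case $A_{col}(k,l)=0$ is excluded since column $A_{col}(k,l)$ must exist for the statement to be meaningful.)

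For the second bullet, fix $i>k$ and $j>A_{col}(k,l)$. I would iterate the semistandard condition starting at $(i,j)$: at each step, passing from column $j'$ to column $j'-1$, I obtain a weakly increasing sequence $i\le i_1\le i_2\le\cdots$ of row indices with $T_{i_t,\,j-t}\ge T_{i,j}$. Stopping at $t=j-A_{col}(k,l)-1$ produces some $i^\ast\ge i>k$ such that
\[
T_{i^\ast,\,A_{col}(k,l)+1} \;\ge\; T_{i,j}.
\]
Now I apply the strict-decrease property of a PBW-tableau (second bullet of Definition~\ref{t}) at column $A_{col}(k,l)+1$: since $T_{k,\,A_{col}(k,l)+1}=l\ne k$ and $i^\ast>k$, we get
\[
l \;=\; T_{k,\,A_{col}(k,l)+1} \;>\; T_{i^\ast,\,A_{col}(k,l)+1} \;\ge\; T_{i,j},
\]
which yields $T_{i,j}<l$.

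No serious obstacle is expected: the proof is essentially a two-line bookkeeping exercise combining the two defining properties of a semistandard PBW-tableau. The only delicate point is ensuring that the iteration of the semistandard condition terminates in column $A_{col}(k,l)+1$ rather than an earlier column, which is automatic from the choice of $t$. The strict (versus weak) inequality at the last step, which is essential for concluding $T_{i,j}<l$ rather than $T_{i,j}\le l$, comes for free from the fact that $T_{k,\,A_{col}(k,l)+1}=l>k$ lies above its ``forced'' range and therefore triggers the strict part of Definition~\ref{t}.
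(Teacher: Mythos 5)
Your argument is correct. The first bullet is precisely the semistandard condition of Definition \ref{sst} applied at the box $(k,A_{col}(k,l)+1)$ (with the understanding, as you note, that $A_{col}(k,l)\ge 1$ for that bullet to have content), and for the second bullet your iteration of that condition back to column $A_{col}(k,l)+1$, followed by the strict column-decrease of Definition \ref{t} (available because $T_{k,A_{col}(k,l)+1}=l>k$, hence $\ne k$), gives $T_{i,j}\le T_{i^\ast,A_{col}(k,l)+1}<l$ as claimed. The paper gives no written proof: the statement is presented as a consequence of Lemma \ref{>}, whose own proof runs on exactly the same iteration of the semistandard condition, so your route is essentially the intended one with the lemma inlined. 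The one substantive difference is that the second part of Lemma \ref{>} applies only to boxes with $T_{i,j}\ne i$, so the lemma-based derivation needs the extra small observation that if $T_{i,j}=i$ then $i\le\mu_j\le\mu_{A_{col}(k,l)+1}<l$ (the last inequality from the first condition of Definition \ref{t}, since $T_{k,A_{col}(k,l)+1}=l\ne k$); your direct chaining treats both cases uniformly, at the modest cost of repeating the bookkeeping already done in the proof of Lemma \ref{>}.
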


\begin{prop}\label{bij}
The number of shape $\la$ semistandard PBW-tableaux is equal to the 
dimension of $V_\la$.
\end{prop}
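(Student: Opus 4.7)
The plan is to exhibit an explicit bijection between shape-$\la$ semistandard PBW-tableaux and the FFLV monomial basis of $V_\la^a$, whose cardinality equals $\dim V_\la$ by the main theorem of \cite{FFoL}. Recall from \cite{FFoL} that $V_\la^a$ has a basis of the form $\{f^{\bs} v_\la\}$ with $\bs=(s_{i,j})_{1\le i\le j\le n-1}$ ranging over the lattice points of the FFLV (Vinberg) polytope $P(\la)$, cut out by a system of Dyck-path inequalities. Thus it suffices to produce a bijection $ST^a_\la \simeq P(\la)\cap\bZ^{n(n-1)/2}$.

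First, I would describe the map $ST^a_\la \to P(\la)\cap\bZ^{n(n-1)/2}$. Given $T\in ST^a_\la$ with column lengths $\mu_1\ge\mu_2\ge\cdots$, each column $j$ records via its non-fixed entries (those $T_{i,j}>\mu_j$) a product of operators $f_{i,T_{i,j}-1}$ applied to $v_{\om_{\mu_j}}$; this is consistent with the $\g^a$-action \eqref{action} on $V^a_{\om_{\mu_j}}$ and with Lemma \ref{emb}. Taking the product over columns yields a vector in $\bigotimes_j V^a_{\om_{\mu_j}}$ lying in the image of $V_\la^a$. Setting
\[
s_{i,j}(T) = \#\{k : T_{i,k}=j+1 \text{ and } j+1 > \mu_k\},
\]
we obtain the multiplicity pattern of the associated monomial in $\U((\fn^-)^a)$.

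Second, I would verify that $(s_{i,j}(T))\in P(\la)$. Here the refined consequences of Definition \ref{sst} collected in Lemma \ref{>} and Corollary \ref{tab} play the central role: the chain condition ``there exists $i_1\ge i$ with $T_{i_1,j-1}\ge T_{i,j}$'' encodes exactly the monotonicity required along a Dyck path, and iterating the witness $i_1$ across several successive columns produces the descending chain that controls a sum $\sum_{\ell} s_{i_\ell,j_\ell}$. The bound from the right-hand side of the Dyck inequality matches the total column length $m_d$ available because at most one move per row can be placed in each column. Conversely, any violation of a Dyck inequality can be traced back, via the structure in Lemma \ref{>}, to the existence of a column $j$ and row $i$ for which no valid witness $i_1$ exists, contradicting semistandardness.

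Third, the inverse map is constructed greedily: given $\bs\in P(\la)\cap\bZ^{n(n-1)/2}$, fill each column $j$ from the bottom rows upward, placing the ``fixed'' entry $i$ in row $i$ whenever no move at $(i,j')$ with $j'\le j$ has been allocated yet, and otherwise placing the entry $T_{i,j}=j'+1$ where $j'$ is selected so that the Vinberg pattern is exhausted column-by-column in the order dictated by the semistandard chain condition. One checks that the output satisfies both bullets of Definition \ref{t} and the condition of Definition \ref{sst}, and that composing with the forward map recovers $\bs$. The main obstacle will be the precise matching of Dyck-path inequalities with the chain condition: the witness index $i_1$ is permitted to drift downward as $j$ decreases, and iterating its extraction across a string of columns is what produces the Dyck path shape; conversely, lifting a violated inequality to a failed chain requires choosing the witness carefully. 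Once this combinatorial correspondence is pinned down, the FFLV theorem of \cite{FFoL} delivers $|ST^a_\la|=\dim V_\la$ at once.
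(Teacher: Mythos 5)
Your overall strategy is exactly the paper's: identify $ST^a_\la$ with the set $S_\la$ of Vinberg (FFLV) configurations satisfying the Dyck-path inequalities \eqref{V}, and then invoke \cite{FFoL}, \cite{V} to get $\#S_\la=\dim V_\la$. Your forward map is the paper's map in the direction $T\mapsto\bs$, namely $s_{k,l}=\#\{j:\,T_{k,j}=l+1\}$ (your extra condition $j+1>\mu_k$ is automatic, since an entry $\le\mu_j$ in a PBW-tableau is forced to equal its row index), and your greedy inverse is in essence the paper's row-by-row filling $\psi$.

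However, there is a genuine gap: the entire content of the paper's proof is the verification that this correspondence is well defined in both directions, and that is precisely the step you defer ("the main obstacle\dots once this combinatorial correspondence is pinned down"). Concretely, one must prove (a) that for $\bs\in S_\la$ the greedy filling actually fits inside the rows, i.e.\ the inequality \eqref{Acol} $A_{col}(k,l)+s_{k,l-1}\le m_k+\cdots+m_{n-1}$ holds, and (b) that for $T\in ST^a_\la$ every Dyck-path inequality holds for $\bs(T)$. The paper does this by an explicit iteration: whenever the witness entry $T_{A(k,l)}$ sits in a row index smaller than its value, one passes to the new pair $(k_1,l_1)$ determined by that witness, obtaining a chain of positions with weakly increasing row and value indices (hence lying on a single Dyck path), and the accumulated sums $s_{k,l-1}+s_{k_1,l_1-1}+\cdots$ together with the leftover column count are bounded by \eqref{V} when the iteration terminates at column $0$. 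Your proposal asserts that the chain condition of Definition \ref{sst} "encodes exactly" the Dyck monotonicity and that violations "can be traced back" to a missing witness, but gives no such iteration or bound; moreover the assertion that "the bound from the right-hand side of the Dyck inequality matches the total column length $m_d$" is not the correct comparison (the relevant bound is the row length $m_k+\cdots+m_{n-1}$, split at the stage where the witness row index reaches the value $l$). Likewise, your description of the inverse filling does not specify the order of insertion (the paper places, in each row, the values in decreasing order, each block of value $l$ beginning immediately after the last column containing an entry $\ge l$ weakly below), nor does it check that the output satisfies Definition \ref{t} and Definition \ref{sst} and that the two maps are mutually inverse. Without these verifications the argument is a plausible plan, not a proof.
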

\begin{proof}
Let $\la=\sum_{i=1}^{n-1} m_i\om_i$. 
Recall (see \cite{FFoL}, \cite{V}) that the dimension of $V_\la$ is equal to the 
number of Vinberg's configurations.
A configuration is a set of numbers $(s_{i,j})_{1\le i\le j\le n-1}\in\bZ_{\ge 0}$ 
or, equivalently, a set of numbers $s_\al$ labeled by positive roots $\al$ 
(we identify a root $\al=\al_i+\dots +\al_j$, $i\le j$, with the pair $i,j$). 
A configuration is said to be Vinberg's configuration, if
the following condition holds: for any Dyck path
$\bp=(p_0,\dots,p_k)$ with $p(0)=\al_i$, $p(k)=\al_j$ we have 
\begin{equation}\label{V}
\sum_{l=0}^k s_{p_l}\le m_i + \dots + m_j.
\end{equation}
Recall that a Dyck path $\bp=(p_0,\dots,p_k)$ is a sequence of roots of $\msl_n$,
$$p_l=\al_{i_l,j_l}=\al_{i_l}+\dots + \al_{j_l},\ i_l\le j_l,$$
such that either ($i_{l+1}=i_{l}$, $j_{l+1}=j_{l}+1$) or ($i_{l+1}=i_{l}+1$, $j_{l+1}=j_l$).
Let $S_\la$ be the set of Vinberg's configurations. We construct a one-to-one map $\psi$ 
from $S_\la$ to the set $ST^a_\la$ of semistandard PBW-tableaux of shape $\la$.  

Let $\bs\in S_\la$ be some configuration. We define a tableau $\psi (\bs)$
in the following way. We fill the diagram $Y_\la$ from bottom to top.
Let us start with the $(n-1)$-st row (if there is no such row, the procedure described
below is empty). We define
\[
T_{n-1,1}=\dots=T_{n-1,s_{n-1,n-1}}=n
\]
and fill the rest of the $(n-1)$-st row with the numbers $n-1$.

Now assume that the rows from $k+1$ to $n$ are already filled. 
We fill the $k$-th row in the following way. 
Let $A_{col}(k,n)$ be the maximal number such that there exists $k\le i\le n-1$ 
such that $T_{i,A_{col}(k,n)}=n$ (since nothing is present in the $k$-th row at the moment,
we can say $k<i$). We then define
\[
T_{k,A_{col}(k,n)+1}=\dots = T_{k,A_{col}(k,n)+s_{k,n-1}}=n.
\]  
In general, we fill the $k$-th row with the numbers $n,n-1,\dots,k+1$ in the 
decreasing order in the following way. 
Assume that all numbers $n,\dots, l+1$ we want to put in the $k$-th row
are already there. Let us explain how do we put the numbers $l$. Let
$A_{col}(k,l)$ be the maximal number such that there exists
$i=k,\dots,n$ such that $T_{i,A_{col}(k,l)}\ge l$.
We then define
\[
T_{k,A_{col}(k,l)+1}=\dots =T_{k,A_{col}(k,l)+s_{k,l-1}}=l.
\]  
After all iterations of the procedure above (corresponding to the numbers 
$l=n,\dots, k+1$) we fill the rest boxes of the $k$-th row with numbers $k$. 

Note that the procedure above fills the $k$-th row of a tableau with numbers
greater than or equal to $k$.
We have to show that the construction above is well-defined and produces a semistandard
PBW-tableau. For this we have to show that
the numbers we fill the $k$-th row with fit into it. Since the length of the $k$-th row
is equal to $m_k+\dots + m_{n-1}$, it suffices  to show that for all $l>k$  
\begin{equation}\label{Acol}
A_{col}(k,l)+s_{k,l-1}\le m_k+\dots + m_{n-1}.
\end{equation}
We define
\[
A_{row}(k,l)=\min\{i:\ T_{i,A_{col}(k,l)}\ge l\}.
\]
We set $A(k,l)=(A_{row}(k,l),A_{col}(k,l))$.

Assume first that $A_{row}(k,l)\ge l$. Then
\[
A_{col}(k,l)\le m_l+\dots + m_{n-1}
\] 
and since $s_{k,l-1}\le m_k+\dots +m_{l-1}$, the inequality \eqref{Acol} holds.
Now let $A_{row}(k,l)< l$. Then since $T_{A(k,l)}\ge l$, we have 
$T_{A(k,l)}\ne A_{row}(k,l)$. Set
\[
k_1=A_{col}(k,l),\ \ l_1=T_{A(k,l)}.
\]
Note that $k_1\ge k$ and $l_1\ge l$.
Then $A_{row}(k,l)=A_{row}(k_1,l_1) + s_{k_1,l_1-1}$.
Thus \eqref{Acol} is equivalent to
\begin{equation}\label{Acol1}
A_{col}(k_1,l_1) +s_{k_1,l_1-1} + s_{k,l-1}\le m_k+\dots + m_{n-1}.
\end{equation}
As on the previous step, assume first that $A_{row}(k_1,l_1)\ge l_1$. Then 
\[
A_{col}(k_1,l_1)\le m_{l_1}+\dots + m_{n-1}
\] 
and since $s_{k,l-1} + s_{k_1,l_1-1}\le m_k+\dots +m_{l_1-1}$ (recall
that $k_1\ge k$ and $l_1\ge l$ and therefore the points $(k,l-1)$ and $(k_1,l_1-1)$
can be connected by a Dyck path), the inequality \eqref{Acol1} holds.
So let $A_{row}(k_1,l_1)< l_1$. Define
\[
k_2=A_{col}(k_1,l_1),\ \ l_2=T_{A(k_1,l_1)}.
\]
We note that $k_2\ge k_1$ and $l_2\ge l_1$.
The \eqref{Acol1} is equivalent to 
\begin{equation}\label{Acol2}
A_{col}(k_2,l_2) + s_{k_2,l_2-1}+s_{k_1,l_1-1} + s_{k,l-1}\le m_k+\dots + m_{n-1}.
\end{equation}
Note that the points $(k,l)$, $(k_1,l_1)$ and $(k_2,l_2)$, can be connected by a Dyck path.

We continue in the same way until we get $A_{col}(k_p,l_p)=0$. The corresponding inequality 
(analogous to \eqref{Acol}, \eqref{Acol1} and \eqref{Acol2}) follows from the inequality
on the sum of $s_\bullet$ on a single Dyck path.

Now we need to construct an opposite map $\psi^{-1}$ from $ST^a_\la$
to $S_\la$. Given a tableau, we define
\begin{equation}\label{bs}
s_{k,l}=\#\{j:\ T_{k,j}=l+1\}.
\end{equation}
Taking into account Corollary \ref{tab} we prove in the same way as above that 
$\bs$ defined by \eqref{bs} is an element of $S_\la$. Finally, we note that
the two maps constructed above are inverse to each other. 
\end{proof}

In order to establish the isomorphism $Q^a\simeq \bar Q^a$ and to prove that
$I^a$ is prime, we introduce one more algebra $\tilde Q^a$ (actually, we will show
that $\tilde Q^a\simeq Q^a$). 
Let $\tilde Q^a$ be an algebra inside the polynomial ring 
$\bC[Z_{i,j}]_{1\le i<j\le n}$ generated by the elements 
$D_{i_1,\dots,i_d}^a$ (see \eqref{Da}).

\begin{lem}
For all $k$, $L$ and $J$ one has $R^{k;a}_{L,J}(D_{i_1,\dots,i_d}^a)=0$
in $\bC[Z_{i,j}]_{1\le i<j\le n}$.
\end{lem}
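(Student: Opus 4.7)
The plan is to realize each $D^a_I$ as the lowest-degree component (for a suitable grading on $\bC[Z_{i,j}]_{i<j}$) of a polynomial $\tilde D_I$ that already satisfies the classical Pl\"ucker identities, and then extract the lowest-degree piece of those identities.

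First I would complete $Z$ to the upper unitriangular matrix
\[
\tilde Z_{i,i}=1,\quad \tilde Z_{i,j}=Z_{i,j}\ \ (i<j),\quad \tilde Z_{i,j}=0\ \ (i>j),
\]
and set, for each sequence $I=(i_1<\dots<i_d)$,
\[
\tilde D_I=\det(\tilde Z_{\al,i_\be})_{1\le\al,\be\le d}\in\bC[Z_{i,j}]_{i<j}.
\]
The initial segments of rows of $\tilde Z$ are linearly independent and form a complete standard flag in $\bC^n$, whose Pl\"ucker coordinates are exactly the $\tilde D_I$. Hence the full set of classical Pl\"ucker identities $R^{k}_{L,J}(\tilde D)=0$ holds as polynomial identities in $\bC[Z_{i,j}]_{i<j}$.

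Next I would equip $\bC[Z_{i,j}]_{i<j}$ with the grading $\deg Z_{i,j}=1$ and analyze
\[
\tilde D_I=\sum_{\sigma\in S_d}\mathrm{sgn}(\sigma)\prod_{\be=1}^d\tilde Z_{\sigma(\be),i_\be}.
\]
A term is nonzero only if $\sigma(\be)\le i_\be$ for every $\be$, and its degree is then $\#\{\be:\sigma(\be)<i_\be\}$. With $m$ defined by $i_m\le d<i_{m+1}$, the strict inequality $\sigma(\be)<i_\be$ is automatic for $\be>m$ (since $\sigma(\be)\le d<i_\be$), while for $\be\le m$ the choice $\sigma(\be)=i_\be$ is available at zero cost. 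Thus the minimum degree equals $d-m=\deg I$ and is attained precisely by those $\sigma$ that fix $\sigma(\be)=i_\be$ for all $\be\le m$ and induce any bijection from $\{m+1,\dots,d\}$ onto $\{1,\dots,d\}\setminus\{i_1,\dots,i_m\}$. Factoring out the fixed part yields a signed sum equal to $\pm\det(Z_{\al,i_\be})_{\al\notin\{i_1,\dots,i_m\},\,\be>m}$, with a global sign that is exactly what one picks up by Laplace-expanding the matrix in \eqref{Da} along its $m$ identity-like columns. The resulting expression is exactly $D^a_I$, so
\[
\tilde D_I=D^a_I+(\text{terms of degree}>\deg I).
\]

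Finally, I would extract the homogeneous component of minimal total degree from $R^k_{L,J}(\tilde D)=0$. By Remark \ref{mindeg}, a summand $\tilde D_{L'}\tilde D_{J'}$ lies in the overall minimum degree precisely when $\{l_{r_1},\dots,l_{r_k}\}\cap\{q+1,\dots,p\}=\emptyset$, and the initial term $\tilde D_L\tilde D_J$ survives to that minimum exactly when $\{j_1,\dots,j_k\}\cap\{q+1,\dots,p\}=\emptyset$ (otherwise the overall minimum is strictly smaller and the initial term drops out of the leading component). These surviving terms are precisely the ones kept in the definition of $R^{k;a}_{L,J}$, so the leading-degree component of the identity reads $R^{k;a}_{L,J}(D^a)=0$, as claimed. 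I expect the only real obstacle to lie in step two, in matching the global sign of the minimum-degree Leibniz sum of $\tilde D_I$ with the sign produced by expansion of \eqref{Da} along its $m$ identity columns; this is a direct bookkeeping check, but it is the only non-formal step of the argument.
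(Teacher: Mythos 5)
Your proof is correct, and its engine is the same one the paper runs on: by Remark \ref{mindeg} the degenerate relation $R^{k;a}_{L,J}$ consists exactly of the minimal-PBW-degree terms of the classical $R^{k}_{L,J}$, so it suffices to exhibit the $D^a_I$ as the lowest-degree components of a family satisfying the classical Pl\"ucker relations and extract the leading homogeneous piece. Where you genuinely differ is in how that family is produced. The paper reinterprets $D^a_I$ as the coordinates of $\exp\left(\sum_{i<j}Z_{i,j}f_{i,j-1}\right)[v^a_{1,\dots,d}]$ and defers to Lemma \ref{cell} in the next section, whose proof compares the degenerate orbit coordinates $X^a_{i_1,\dots,i_d}(\bc)$ with the classical ones $X_{i_1,\dots,i_d}(\bc)$ coming from $\exp\left(\sum c_{i,j}f_{i,j}\right)v_{\om_d}$, using the action formula \eqref{action} and Lemma \ref{xcij} to see that the former are the lowest-degree parts of the latter. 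You instead stay entirely inside $\bC[Z_{i,j}]_{i<j}$: the minors $\tilde D_I$ of the generic unitriangular matrix satisfy the classical relations (this is the embedding \eqref{polyemb} specialized to $\tilde Z$), and a Leibniz degree count shows their lowest-degree components are the $D^a_I$, with no cancellation since distinct minimal permutations give distinct monomials. This buys a self-contained argument that needs neither the degenerate module nor the forward reference to Section 5, at the price of the sign bookkeeping you flag; that check does come out right, the global sign being $(-1)^{\sum_{l\le m}(i_l-l)}$, in agreement with Lemma \ref{xcij}. One small caution: the unit entries in \eqref{Da} must be read as sitting in position $(i_\be,\be)$ for $\be\le m$ (as \eqref{C} and Lemma \ref{xcij} make plain), which is precisely the reading your Laplace expansion along the $m$ special columns uses; with the literal ``$\be=\al$'' placement the determinant would often vanish. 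Your handling of the initial term $X^a_LX^a_J$ (kept only when $\{j_1,\dots,j_k\}\cap\{q+1,\dots,p\}=\emptyset$) also matches the remark following the definition of $R^{k;a}_{L,J}$, so the extraction step is complete.
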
 
\begin{proof}
We note that the definition of $D^a_{i_1,\dots,i_d}$ can be given in the form of the equality
in $V^a_{\om_d}$:
\[
\exp\left(\sum_{i<j} Z_{i,j}f_{i,j-1}\right)[v^a_{1,\dots,d}]=\sum_{i_1<\dots<i_d} 
D^a_{i_1,\dots,i_d}v^a_{i_1,\dots,i_d}.
\]
The statement that these coefficients satisfy relations from $I^a$ 
is proved in the following section, see Lemma \ref{cell}. 
\end{proof}

\begin{cor}
There exists a surjection of algebras $Q^a\to \tilde Q^a$.
\end{cor}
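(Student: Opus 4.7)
The proof is essentially immediate from the preceding lemma, so the plan is short. First I would define a map at the level of the ambient polynomial ring. Let
\[
\varphi \colon \bC[X^a_{i_1,\dots,i_d}]_{d=d_1,\dots,d_s,\, 1\le i_1<\dots<i_d\le n} \longrightarrow \bC[Z_{i,j}]_{1\le i<j\le n}
\]
be the unique algebra homomorphism sending each generator $X^a_{i_1,\dots,i_d}$ to the corresponding polynomial $D^a_{i_1,\dots,i_d}$ defined in \eqref{Da}. Such a $\varphi$ exists and is unique because the source is a free commutative algebra on the symbols $X^a_{i_1,\dots,i_d}$.

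Next I would identify its image. By the very definition of $\tilde Q^a$ as the subalgebra of $\bC[Z_{i,j}]$ generated by the elements $D^a_{i_1,\dots,i_d}$, the image of $\varphi$ is exactly $\tilde Q^a$. Thus, regarded as a map into $\tilde Q^a$, $\varphi$ is surjective.

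Finally I would show that $\varphi$ factors through the quotient by $I^a$. The ideal $I^a$ is generated by the degenerate Pl\"ucker relations $R^{k;a}_{L,J}$, and the lemma immediately preceding this corollary asserts precisely that
\[
R^{k;a}_{L,J}\bigl(D^a_{i_1,\dots,i_d}\bigr)=0
\]
in $\bC[Z_{i,j}]$ for every choice of $k$, $L$, $J$. Hence every generator of $I^a$ lies in $\ker\varphi$, so $I^a\subseteq\ker\varphi$, and $\varphi$ descends to a well-defined surjective algebra homomorphism $Q^a\to\tilde Q^a$, as required. There is no real obstacle here; the entire content of the statement is packaged into the preceding lemma, and the corollary is the formal universal-property consequence.
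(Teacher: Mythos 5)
Your proof is correct and is exactly the argument the paper intends: the corollary follows formally from the preceding lemma by sending $X^a_{i_1,\dots,i_d}\mapsto D^a_{i_1,\dots,i_d}$, noting the image is $\tilde Q^a$ by definition, and observing that the generators $R^{k;a}_{L,J}$ of $I^a$ lie in the kernel. No differences from the paper's (implicit) reasoning.
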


\begin{lem}\label{linind}
Fix a partition $\la$. Then the elements $D_T^a$, $T\in ST^a_\la$, are linearly independent.   
\end{lem}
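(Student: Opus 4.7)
The plan is a monomial-order argument: introduce a term order on $\bC[Z_{i,j}]_{i<j}$ and show that the polynomials $D^a_T$, $T\in ST^a_\la$, have pairwise distinct leading monomials, which gives linear independence for free. Specifically, declare $Z_{i,j}\succ Z_{i',j'}$ iff $j>j'$, or $j=j'$ and $i<i'$, and extend lexicographically to monomials.

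The first (and main) step is to identify the leading monomial of a single column determinant. Let $T^{(j)}$ be the $j$-th column of $T$, with underlying set of values $i_1<\dots<i_m\le\mu_j<i_{m+1}<\dots<i_{\mu_j}$ and non-frozen row positions $\al_1<\dots<\al_{\mu_j-m}$. By the identity-block structure in \eqref{Da}, $D^a_{T^{(j)}}$ equals (up to sign) $\det(Z_{\al_a, i_{m+b}})$, a $(\mu_j-m)\times(\mu_j-m)$ determinant whose entries involve row indices $\le\mu_j$ and column indices $>\mu_j$. A greedy argument under $\succ$ pins down the leading term: the $\succ$-largest variable appearing anywhere in the matrix is the one with the largest column index $i_{\mu_j}$ and, among those, the smallest row index $\al_1$; selecting $Z_{\al_1,i_{\mu_j}}$ forces the permutation to send $1\mapsto \mu_j-m$, and iterating extracts the anti-diagonal permutation. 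Hence $\mathrm{in}_\succ(D^a_{T^{(j)}})=\prod_a Z_{\al_a,i_{\mu_j-a+1}}$. Because a PBW-tableau places the non-frozen values in a column in decreasing order from top to bottom, we have $T_{\al_a,j}=i_{\mu_j-a+1}$, which rewrites the leading monomial cleanly as
\[
\mathrm{in}_\succ(D^a_{T^{(j)}})=\prod_{k:\,T_{k,j}\ne k} Z_{k,T_{k,j}}.
\]

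The second step combines columns. Leading monomials multiply under any term order, so
\[
\mathrm{in}_\succ(D^a_T)=\prod_{j=1}^{\la_1}\prod_{k:\,T_{k,j}\ne k} Z_{k,T_{k,j}}=\prod_{1\le k<l\le n}Z_{k,l}^{\,s_{k,l-1}},
\]
where $\bs=\psi^{-1}(T)$ is the Vinberg configuration associated to $T$ via the explicit inverse $s_{k,l}=\#\{j:T_{k,j}=l+1\}$ established in the proof of Proposition \ref{bij}. Since $\psi$ is a bijection and the exponents of the leading monomial read off $\bs$ directly, distinct $T\in ST^a_\la$ produce distinct Vinberg configurations and therefore distinct leading monomials. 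A family of polynomials with pairwise distinct leading monomials under a term order is automatically linearly independent, so the lemma follows.

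The main obstacle is the column-level step: carefully verifying that the anti-diagonal term is the unique $\succ$-maximum among the $(\mu_j-m)!$ terms of the determinant. The greedy pick at each stage is intuitive, but the lexicographic comparison must be executed factor by factor to rule out "mixed" permutations that partially agree with the anti-diagonal but swap later entries. Once that combinatorial bookkeeping is in place, the second step is essentially a direct translation between the PBW-tableau description of non-frozen entries and the Vinberg configuration provided by Proposition \ref{bij}.
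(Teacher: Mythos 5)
Your proof is correct and is essentially the paper's own argument: both single out the monomial $\prod_{i,j:\,T_{i,j}\ne i} Z_{i,T_{i,j}}$ inside $D^a_T$, identify it with the Vinberg configuration $\psi^{-1}(T)$ via Proposition \ref{bij}, and conclude by a triangularity/maximality argument. Your explicit term order on $\bC[Z_{i,j}]$ merely makes precise (and in fact justifies in slightly more detail, e.g.\ why this monomial occurs with nonzero coefficient in the product of column determinants) the paper's claim that $M^a_{\bs}$ does not appear in $D^a_{\psi(\mathbf{t})}$ for smaller configurations.
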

\begin{proof}
Let $T$ be a semistandard PBW-tableau. Note that $D^a_T$ contains a monomial
\[
M^a_T=\prod_{i,j:\ T_{i,j}\ne i} Z_{i,T_{i,j}}.
\]
It is convenient to rewrite this monomial in terms of the corresponding 
Vinberg's configuration $\bs=\psi^{-1}(T)$ (see the bijection in the proof of Proposition \ref{bij}). 
Namely it is easy to see that
\[
M^a_\bs=\prod_{i\le l} Z_{i,l+1}^{s_{i,l}}.
\] 
Let us consider the following order on the set $S_\la$. We say 
$\bs > {\bf t}$ if there exists a pair $i_0,l_0$ such that
$s_{i_0,l_0}>t_{i_0,l_0}$ and
\[
s_{i,l}=t_{i,l} \text{ if } (i<i_0 \text{ or } i=i_0, l<l_0).
\]
Note that if $\bs>{\bf t}$, then the monomial $M_\bs^a$ does not appear in
$D^a_{\psi({\bf t})}$. Therefore the elements $D^a_T$ are linearly independent.
\end{proof}

Recall that for a shape $\la$ tableau $T$ we have the elements $X^a_T$ defined by
\[
X^a_T=\prod_{j=1}^{\la_1} X^a_{T_{1,j},\dots,T_{\mu_j,j}}.
\]
\begin{lem}\label{XT}
The elements $X^a_T$, $T\in ST^a_\la$,  span $Q^a$ and hence $\tilde Q^a$ and $\bar Q^a$.
\end{lem}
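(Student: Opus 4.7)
The plan is to prove the spanning statement in $Q^a$ by a straightening algorithm using the degenerate Pl\"ucker relations; the corresponding statements for $\tilde Q^a$ and $\bar Q^a$ then follow at once from the surjections $Q^a \twoheadrightarrow \tilde Q^a$ and $Q^a \twoheadrightarrow \bar Q^a$ (the latter being Corollary \ref{bar}). First I would reduce to the case of PBW-tableaux: any monomial in $Q^a_\la$ can be rearranged, by the sign convention $X_{j_{\sigma(1)},\ldots,j_{\sigma(d)}} = (-1)^\sigma X_{j_1,\ldots,j_d}$, as $\pm X^a_{T'}$ for some filling $T'$ of $Y_\la$ satisfying the PBW-column axioms of Definition \ref{t} --- order the columns by non-increasing length to obtain the shape $Y_\la$, and within each column place any entry of value $i \le \mu_j$ into row $i$, with the remaining entries arranged strictly decreasing from top to bottom.

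Next, I would introduce a total order on PBW-tableaux of shape $\la$ --- for instance the reverse lexicographic order, comparing entries column-by-column from left to right and within each column from top to bottom --- and induct on this order. If $T'$ is already semistandard there is nothing to do. Otherwise, by Definition \ref{sst}, there exist indices $j>1$ and $i$ with $T'_{i_1,j-1}<T'_{i,j}$ for every $i_1\ge i$. Let $L$ and $J$ denote the $(j-1)$-st and $j$-th columns of $T'$ and let $k$ count the offending entries in $J$ with respect to column $j-1$. The relation $R^{k;a}_{L,J}\in I^a$ then rewrites $X^a_L X^a_J$ as a signed sum of products $X^a_{L^{(m)}}X^a_{J^{(m)}}$ in which the offending entries of $J$ are swapped with appropriate entries of $L$. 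Each summand, when re-expressed as $\pm X^a_{T^{(m)}}$ after column reordering, is a PBW-tableau strictly smaller than $T'$ in our order, so the induction closes and $X^a_{T'}$ is reduced modulo $I^a$ to a linear combination of $X^a_T$ with $T \in ST^a_\la$.

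The main obstacle is the combinatorial verification that the restriction \eqref{new} defining which terms survive in $R^{k;a}_{L,J}$ is exactly what is needed for the algorithm: the permitted swaps keep us inside the set of valid PBW-tableaux at the correct PBW-degree, while the swaps excluded by \eqref{new} would produce terms of strictly larger PBW-degree and therefore cannot appear in any relation holding at the prescribed multi-degree of the PBW-graded algebra. The conceptual underpinning is Remark \ref{mindeg}: the generators of $I^a$ are the minimal-PBW-degree parts of the classical Pl\"ucker relations, and in a PBW-graded algebra only these minimal parts intervene when straightening a monomial of fixed multi-degree. Making this bookkeeping precise using the PBW-degree formula \eqref{PBWdegree} and the PBW-tableau axioms of Definitions \ref{t} and \ref{sst} is the main combinatorial content of the argument, after which the lemma follows immediately.
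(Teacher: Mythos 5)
Your overall strategy is the same as the paper's (a straightening algorithm modulo $I^a$, plus the surjections $Q^a\to\bar Q^a$, $Q^a\to\tilde Q^a$ from Corollary \ref{bar}), but the proposal stops exactly where the actual proof begins, and the part you defer is not routine bookkeeping. Two things must be verified and are not. First, to use $R^{k;a}_{L,J}$ to rewrite $X^a_LX^a_J$ you must know that the ``initial'' term $X^a_LX^a_J$ is \emph{present} in the degenerate relation: by the remark following the definition of $I^a$, this term is itself subject to condition \eqref{new} and may be absent, in which case the relation is useless for straightening that monomial. In the paper this is guaranteed by choosing $k$ to be the \emph{row} of the violating entry (not ``the number of offending entries''), and by the PBW-tableau axioms of Definition \ref{t}: since $T_{k,j+1}>T_{p,j}\ge p$, the entries $T_{1,j+1},\dots,T_{k,j+1}$ are either equal to their row index ($\le q$) or larger than $p$, so they avoid $\{q+1,\dots,p\}$. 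Second, one must show that every surviving term, after re-sorting its columns into PBW form, is strictly smaller in the chosen order; this again uses the tableau axioms and the specific violation, and it is the heart of the paper's argument. Your appeal to PBW-gradedness (``excluded swaps have larger PBW degree, hence cannot appear'') explains why those terms are not in $R^{k;a}_{L,J}$, which is true by definition, but it addresses neither of the two points above.

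Moreover, the order you propose appears to be the wrong one. Take $\g=\msl_3$ and the non-semistandard tableau with columns $(1,2)$ and $(3)$. The relation $R^{1;a}_{(1,2),(3)}=X^a_{1,2}X^a_3+X^a_{2,3}X^a_1$ rewrites it as (minus) the tableau with columns $(3,2)$ and $(1)$. Comparing column-by-column from left to right and top to bottom, the first difference is at position $(1,1)$, where the new tableau has $3>1$, i.e.\ it is \emph{larger} in your order, so your induction does not close (and the ``reverse'' reading of your order runs into the symmetric problem in column $j-1$, whose new entries are hard to control). The paper's order compares tableaux starting from the rightmost column and, within a column, from the bottom up; the descent is then proved by showing that the lowest changed entry of the right-hand column strictly decreases. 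So the proposal is the right plan, but with an unproved (and, as stated, false for your order) key step; the missing combinatorial lemma is precisely the content of the paper's proof.
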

\begin{proof}
We introduce an order on the set of tableaux of shape $\la$. We say
$T^{(1)}>T^{(2)}$ if there exist $i_0,j_0$ such that  
$T^{(1)}_{i_0,j_0}>T^{(2)}_{i_0,j_0}$ and 
\[
T^{(1)}_{i,j}=T^{(2)}_{i,j} \text{ if } j>j_0 \text{ or } (j=j_0, i>i_0).
\]
Assume that we are given a PBW-tableau $T$, which is not semistandard.
Recall that the elements $D^a_{i_1,\dots,i_d}$ do satisfy relations from
$I^a$. Using these relations we rewrite $D^a_T$ in terms of smaller tableaux.


Assume that the condition from definition \ref{sst} is violated. Then there exist
$k,j$ such that $T_{k,j+1}>k$ and for all $i\ge k$ $T_{k,j+1}>T_{i,j}$.
Let $p=\mu_j$, $q=\mu_{j+1}$. Since $T_{k,j+1}>T_{p,j}\ge p$, 
none of the elements $T_{1,j+1}, \dots, T_{k,j+1}$ belong to
the set $\{q+1,\dots,p\}$ and hence the term
\[
X^a_{T_{1,j},\dots,T_{k,j}}X^a_{T_{1,j+1},\dots,T_{k,j+1}}
\]
is present in the relation
\begin{equation}\label{R2}
R^{k;a}_{(T_{1,j},\dots,T_{p,j}),(T_{1,j+1},\dots,T_{q,j+1})}.
\end{equation}
Let us look at other terms in \eqref{R2}.
Assume we pick arbitrary numbers $1\le i_1<\dots<i_k\le p$ and
interchange
\[
T_{i_1,j},\dots, T_{i_k,j}\ \text{ with }\ T_{1,j+1},\dots,T_{k,j+1}.
\]
Since $T_{i,j}<T_{k,j+1}$ for all $i\ge k$, we have (after interchanging) at least 
$q-k+1$
elements $T_{i_k,j},T_{k+1,j+1},\dots,T_{q,j+1}$ which are smaller than
$T_{k,j+1}$. Therefore the lowest element in the column $j_2$, which differs from
the one in $T$ is smaller. Therefore, after interchanging the resulting tableau becomes smaller.
\end{proof}

\begin{thm}
We have isomorphisms $Q^a\simeq \bar Q^a\simeq \tilde Q^a$.
The elements $X_T^a$ labeled by semistandard PBW-tableaux $T$ form a basis of $Q^a$. 
\end{thm}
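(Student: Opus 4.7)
The plan is to pin down the dimension of each graded piece $Q^a_\la$ by sandwiching it between the spanning statement from Lemma~\ref{XT} and the independence statement from Lemma~\ref{linind}, after which both surjections become isomorphisms for dimension reasons.

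Fix $\la\in P^+(d_1,\dots,d_s)$ and restrict attention to the graded component $Q^a_\la$. Lemma~\ref{XT} gives that the set $\{X^a_T:T\in ST^a_\la\}$ spans $Q^a_\la$, and Proposition~\ref{bij} identifies its cardinality with $\dim V_\la=\dim(V^a_\la)^*$. This yields an upper bound $\dim Q^a_\la\le\dim V_\la$.

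For the matching lower bound, consider the surjection $\varphi\colon Q^a\to \tilde Q^a$ established just above Lemma~\ref{linind}. By the definition of $X^a_T$ as the product $\prod_j X^a_{T_{1,j},\dots,T_{\mu_j,j}}$ and the parallel definition of $D^a_T$ as the corresponding product of column determinants, one has $\varphi(X^a_T)=D^a_T$ (this is the one step worth verifying carefully: on generators $\varphi$ sends $X^a_{i_1,\dots,i_d}$ to $D^a_{i_1,\dots,i_d}$, and the rest follows since $\varphi$ is an algebra map). Lemma~\ref{linind} then forces the $X^a_T$ to be linearly independent in $Q^a_\la$. Combining with the previous paragraph, $\{X^a_T:T\in ST^a_\la\}$ is a basis of $Q^a_\la$ and $\dim Q^a_\la=\dim V_\la$.

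It now remains to promote this to the desired algebra isomorphisms. The surjection $Q^a\to\bar Q^a$ of Corollary~\ref{bar} is compatible with the $P^+(d_1,\dots,d_s)$-grading, hence restricts to a surjection $Q^a_\la\to (V^a_\la)^*$ of spaces of equal dimension, which must therefore be an isomorphism; summing over $\la$ gives $Q^a\simeq\bar Q^a$. The analogous argument applied to $\varphi\colon Q^a\to\tilde Q^a$ (whose image in each graded piece has dimension at least $\#ST^a_\la=\dim V_\la$ by Lemma~\ref{linind}, hence equals $\dim Q^a_\la$) yields $Q^a\simeq\tilde Q^a$. The basis statement is then immediate from the direct sum decomposition $Q^a=\bigoplus_\la Q^a_\la$. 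The only genuine obstacle in this plan is the bookkeeping identification $\varphi(X^a_T)=D^a_T$; everything else is a clean dimension count.
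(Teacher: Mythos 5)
Your proposal is correct and is essentially the paper's own argument: both sandwich $\dim Q^a_\la$ between the spanning bound from Lemma \ref{XT} together with the count of Proposition \ref{bij} and the independence coming from Lemma \ref{linind}, and then turn the surjections of Corollary \ref{bar} and $Q^a\to\tilde Q^a$ into isomorphisms by comparing dimensions gradewise. The only cosmetic difference is routing: you obtain the lower bound on $\dim Q^a_\la$ by pulling back the linear independence of the $D^a_T$ along $X^a_T\mapsto D^a_T$, whereas the paper first pins $\dim\tilde Q^a_\la=\dim V_\la$ and uses $\dim\bar Q^a_\la=\dim(V^a_\la)^*=\dim V_\la$; the ingredients and conclusions are the same.
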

\begin{proof}
In fact, Lemma \ref{XT} and Proposition \ref{bij} give the inequalities 
$\dim Q_\la^a\le \dim V_\la$ and 
$\dim \tilde Q_\la^a\le \dim V_\la$. From Lemma \ref{linind} we obtain 
$\dim \tilde Q_\la^a= \dim V_\la$. Since $\dim \bar Q_\la^a= \dim V_\la$,
Corollary \ref{bar} finishes the proof.
\end{proof}

\begin{cor}\label{prime}
The ideal $I^a$ is prime.
\end{cor}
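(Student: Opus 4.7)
The plan is to deduce primality immediately from the isomorphism $Q^a \simeq \tilde Q^a$ established in the preceding theorem. By definition, $\tilde Q^a$ is a subalgebra of the polynomial ring $\bC[Z_{i,j}]_{1\le i<j\le n}$, generated by the explicit elements $D^a_{i_1,\dots,i_d}$. Since the polynomial ring is an integral domain, any subalgebra of it is an integral domain as well. Hence $\tilde Q^a$ is a domain.

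Transferring this along the isomorphism $Q^a \simeq \tilde Q^a$, we conclude that $Q^a$ is a domain. Since $Q^a$ is by definition the quotient $\bC[X^a_{i_1,\dots,i_d}]/I^a$, and a quotient of a commutative ring by an ideal is an integral domain if and only if the ideal is prime, we obtain that $I^a$ is prime.

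The only thing that needs to be verified carefully is that the surjection $Q^a \twoheadrightarrow \tilde Q^a$ produced in the text is in fact an isomorphism; but this is part of the previous theorem, which follows from combining the dimension equalities $\dim Q^a_\la \le \dim V_\la$ (Lemma \ref{XT} and Proposition \ref{bij}) and $\dim \tilde Q^a_\la = \dim V_\la$ (Lemma \ref{linind}). So no new work is required: the corollary is essentially a one-line consequence, and there is no real obstacle beyond pointing to the right piece of the theorem.
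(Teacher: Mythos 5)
Your proof is correct and coincides with the paper's own argument: the corollary is deduced from the isomorphism $Q^a\simeq \tilde Q^a$ together with the fact that $\tilde Q^a$ sits inside the polynomial ring $\bC[Z_{i,j}]$, hence is a domain. You merely spell out the elementary steps (a subalgebra of a domain is a domain; a quotient by an ideal is a domain iff the ideal is prime) that the paper leaves implicit.
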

\begin{proof}
Follows from the isomorphism $Q^a\simeq \tilde Q^a$, since $\tilde Q^a$ is a subalgebra
in the polynomial algebra.
\end{proof}

We close this section with the following proposition.
\begin{prop}\label{nb}
The elements $X_T$, $T\in ST^a_\la$, form a basis of $Q$.
\end{prop}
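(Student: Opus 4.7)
My plan is to transport the basis of $Q_\la^a$ (just established) to $Q_\la$ via a PBW filtration argument. First, the dimension equality $\dim Q_\la=\dim V_\la=|ST^a_\la|$ (the first by the classical decomposition $Q=\bigoplus_\la V_\la^*$, the second by Proposition \ref{bij}) reduces the problem to showing linear independence of $\{X_T\}_{T\in ST^a_\la}$ in $Q_\la$.

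The key step is to equip $Q$ with a filtration whose associated graded is $Q^a$. I would define $F^sV_\la^*=\{X:X|_{F_{s-1}V_\la}=0\}$. The classical embedding $V_{\la+\mu}\hk V_\la\T V_\mu$, $v_{\la+\mu}\mapsto v_\la\T v_\mu$, is PBW-compatible because both sides are cyclic over $\U(\fn^-)$ and the Leibniz rule preserves PBW degree; dualizing gives $F^sV_\la^*\cdot F^tV_\mu^*\subset F^{s+t}V_{\la+\mu}^*$, so $Q$ becomes a filtered algebra. The canonical identifications $F^s/F^{s+1}\simeq (F_s/F_{s-1})^*$, combined with Lemma \ref{emb}, identify $\gr Q$ with $\bar Q^a\simeq Q^a$ as graded algebras.

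Each Pl\"ucker variable $X_J=X_{i_1,\dots,i_d}$ lies in $F^{\deg J}\setminus F^{\deg J+1}$ (it pairs non-trivially with $v_J\in F_{\deg J}\setminus F_{\deg J-1}$), and its symbol in $\gr V_{\om_d}^*\simeq (V_{\om_d}^a)^*$ is exactly $X^a_J$; consequently the symbol of $X_T$ in $\gr Q$ equals $X^a_T$ for every PBW-tableau $T$. Since $\{X^a_T\}_{T\in ST^a_\la}$ is a basis of $Q_\la^a\simeq \gr Q_\la$, the elements $\{X_T\}_{T\in ST^a_\la}$ have linearly independent symbols in $\gr Q$, hence are linearly independent in $Q_\la$, and by the dimension count form a basis. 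The main technical point is the identification $\gr Q\simeq Q^a$, which rests on the PBW-compatibility of the classical multiplication embedding — the classical counterpart of Lemma \ref{emb}; an alternative route would be to mimic the straightening of Lemma \ref{XT} using the classical Pl\"ucker relations $R^k_{L,J}=0$, in which case the additional terms absent from the degenerate version would need to be shown to correspond to strictly smaller tableaux in the order of Lemma \ref{XT}.
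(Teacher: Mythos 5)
Your proposal is correct, but it proves the statement by a genuinely different route than the paper. Both arguments start from the same counting reduction ($\#ST^a_\la=\dim V_\la=\dim Q_\la$), but the paper then proves the \emph{spanning} property: it reuses the straightening order introduced in the proof of Lemma \ref{XT}, observes that the classical relation $R^k_{L,J}$ equals the degenerate relation $\tilde R^{k;a}_{L,J}$ plus terms of strictly larger PBW degree, and runs a double induction — on the tableau order and on the total PBW degree of the columns, which is bounded above for a fixed shape — to rewrite every non-semistandard $X_T$ in terms of smaller tableaux or tableaux of larger PBW degree. You instead prove \emph{linear independence}, filtering each $V_\la^*$ by annihilators of the PBW filtration and passing to symbols in $\gr Q\simeq \bar Q^a\simeq Q^a$; this works, and the one point you should spell out is the identification $\gr Q\simeq\bar Q^a$ as algebras: the associated graded of the embedding $V_{\la+\mu}\hk V_\la\T V_\mu$ sends the class of $x\,v_{\la+\mu}$, $x\in\U(\fn^-)_{\le s}$, to $\bar x(v_\la\T v_\mu)$, so its image is exactly $\U(\g^a)(v_\la\T v_\mu)$, and Lemma \ref{emb} then identifies it with $V^a_{\la+\mu}$ (giving strictness); dualizing identifies the multiplication on $\gr Q$ with that of $\bar Q^a$, after which the symbol argument and the dimension count finish the proof. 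What each approach buys: yours makes explicit the structural fact $\gr Q\simeq Q^a$ and is essentially the algebraic form of the flat degeneration of Section 5 (the deformed ring $Q^t$ there is the Rees-type object attached to your filtration), so it packages the proposition together with flatness; the paper's argument is more elementary and self-contained, staying entirely inside the straightening machinery already set up for $Q^a$. One small correction to your closing remark: your sketched ``alternative route'' is in fact the paper's proof, except that the extra classical terms are not shown to correspond to smaller tableaux — they have strictly larger PBW degree, and the induction terminates because that degree is bounded for tableaux of a fixed shape.
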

\begin{proof}
Since $\# ST^a_\la=\dim V_\la$, it suffices to prove that 
the elements $X_T$, $T\in ST^a_\la$, span $Q$. Recall that in the proof of Lemma \ref{XT}
we have introduced an order on the set of tableau and expressed each $X^a_T$, 
$T\notin ST^a_\la$
as a linear combination of $X_{T'}$ with $T'$ smaller than $T$. For this we have used
the relations $R^{k;a}_{L,J}$. We note that
\[
R^{k}_{L,J}=\tilde R^{k;a}_{L,J} + \sum_i X_{L^{(i)}}X_{J^{(i)}}, 
\] 
where $\deg J^{(i)} + \deg L^{(i)} > \deg J + \deg L$ and $\tilde R^{k;a}_{L,J}$
is obtained from $R^{k;a}_{L,J}$ simply by omitting superscripts $a$ in variables $X^a_J$.
Therefore in $Q$ any element $X_T$, $T\notin ST^a_\la$ can be expressed as a linear combination
of $X_{T'}$, where $T'$ is smaller than $T$ or the sum of PBW-degrees of 
columns of $T'$ is bigger than that of $T$. Since the sum of PBW-degrees of fixed shaped tableaux
is bounded from above, our proposition is proved. 
\end{proof}

\section{The varieties $\Fl^a(d_1,\dots,d_s)$}\label{geom}
\subsection{The ideal $I^a$}
Throughout the section we fix a sequence $1\le d_1<\dots <d_s\le n-1$ and omit 
the numbers $d_i$ in the notations $\Fl^a$, $I^a$, etc.

Let $U\hk \Fl^a$ be the dense $\bG_a^M$-orbit 
\begin{multline*}
U=\left\{\exp\left(\sum_{1\le i\le j\le n-1} c_{i,j}f_{i,j}\right)[v_{\om_{d_1}}]\times\right. 
\dots
\times\\ \left.\exp\left(\sum_{1\le i\le j\le n-1} c_{i,j}f_{i,j}\right)[v_{\om_{d_s}}],\ c_{i,j}\in\bC
\right\}.
\end{multline*}
By definition, the variety $\Fl^a$ is the closure of $U$ in the product of the
projective spaces $\bP(V^a_{\om_{d_i}})$, $i=1,\dots,s$.
Recall the coordinates $X^a_{i_1,\dots,i_d}$ in $V^a_{\om_d}$. 
These coordinates produce
a multi-homogeneous coordinates on the product $\times_{i=1}^s \bP(V^a_{\om_{d_i}})$. 
Let us denote by $X^a_{i_1,\dots,i_d}(c_{i,j})=X^a_{i_1,\dots,i_d}(\bc)$ 
the coordinates of a point
\[
\exp\left(\sum_{1\le i\le j\le n-1} c_{i,j}f_{i,j}\right)
(v^a_{\om_{d_1}}\times \dots \times v^a_{\om_{d_s}})\in \times_{i=1}^s V^a_{\om_{d_i}}.
\]
For instance, $X_{1,\dots,d}=1$.
Let us compute these coordinates explicitly.  
Let $1\le r\le d$ be a number such that $i_r\le d$ and $i_{r+1}>d$. 
Define the numbers $1\le j_1<\dots <j_{d-r}\le d$ by the formula
\[
\{j_1,\dots,j_{d-r}\}=\{1,\dots,d\}\setminus \{i_1,\dots,i_r\}.
\]

\begin{lem}\label{xcij}
We have
\[
X^a_{i_1,\dots,i_d}(\bc)=(-1)^{\sum_{l=1}^r (i_l - l)}
\sum_{\sigma\in S_{d-r}} (-1)^\sigma c_{j_1, i_{r+\sigma(1)}-1}\dots 
c_{j_{d-r}, i_{r+\sigma(d-r)}-1}.
\]
\end{lem}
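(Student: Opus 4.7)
The plan is to evaluate $\exp(\sum_{i\le j} c_{i,j} f_{i,j})v^a_{1,\dots,d}$ explicitly and read off the coefficient of $v^a_{i_1,\dots,i_d}$. The key simplification is that on $V^a_{\om_d}$ the operators $f_{i,j}$ with $i\le d\le j$ pairwise commute (this is precisely the PBW abelianization built into $\g^a$, and is visible directly from \eqref{action}), while all other $f_{i,j}$ act as zero on $V^a_{\om_d}$. Moreover each such $f_{i,j}$ satisfies $f_{i,j}^2=0$: on a basis vector $v^a_J$ with $i\in J$, the image has $j+1>d\ge i$ in place of $i$, so a second application finds no $i$ in the sequence. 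Consequently
\[
\exp\Bigl(\sum_{1\le i\le j\le n-1} c_{i,j}f_{i,j}\Bigr)v^a_{1,\dots,d}=\prod_{1\le i\le d\le j\le n-1}(1+c_{i,j}f_{i,j})\,v^a_{1,\dots,d}.
\]

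Next I would expand this product. A term is indexed by a set of pairs $(i,j)$ with pairwise distinct $i$-coordinates, and it acts on $v^a_{1,\dots,d}$ by simultaneously replacing, for each chosen pair, the index $i$ at position $i$ with $j+1$; repetitions among the $j+1$'s give zero by antisymmetry, while all $j+1>d$ are automatically disjoint from any unreplaced $\{1,\dots,d\}$-entries. To pick out the coefficient of $v^a_{i_1,\dots,i_d}$, the unreplaced entries must be exactly $\{i_1,\dots,i_r\}$, the modified positions must be $\{j_1,\dots,j_{d-r}\}$, and the new entries must be $\{i_{r+1},\dots,i_d\}$. Such data are parametrized by $\sigma\in S_{d-r}$, where position $j_k$ is replaced by $i_{r+\sigma(k)}$ coming from the factor $f_{j_k,i_{r+\sigma(k)}-1}$. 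This produces the monomial $c_{j_1,i_{r+\sigma(1)}-1}\cdots c_{j_{d-r},i_{r+\sigma(d-r)}-1}$ up to a sign.

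The only delicate point, and the place I would focus, is the overall sign. After the replacements, the position-by-position sequence in $\{1,\dots,d\}$ is some permutation $\pi$ of $(i_1,\dots,i_d)$, and antisymmetry introduces a factor $(-1)^\pi$. I would decompose $\pi=\sigma'\circ\pi_0$, where $\pi_0$ is the permutation of $\{1,\dots,d\}$ that sends $i_l\mapsto l$ for $l\le r$ and $j_k\mapsto r+k$, while $\sigma'$ acts as $\sigma$ on the last $d-r$ slots and trivially on the first $r$. Then $(-1)^\pi=(-1)^\sigma(-1)^{\pi_0}$. The main obstacle is computing $(-1)^{\pi_0}$, which I would do by a direct inversion count: $\pi_0$ has no inversions among the $i_l$'s nor among the $j_k$'s, and for each $l$ the inversions arising from pairs $(i_l,j_k)$ with $j_k<i_l$ number exactly $(i_l-1)-(l-1)=i_l-l$, since the elements of $\{1,\dots,d\}$ below $i_l$ split into $i_1,\dots,i_{l-1}$ together with the corresponding $j_k$'s. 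Summing gives $(-1)^{\pi_0}=(-1)^{\sum_{l=1}^r(i_l-l)}$, and collecting over $\sigma$ yields the stated formula.
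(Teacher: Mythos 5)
Your proof is correct and follows essentially the same route as the paper, which simply asserts that the lemma follows from the explicit action formula \eqref{action}; you fill in the details (commutativity and square-zero of the relevant $f_{i,j}$ on $V^a_{\om_d}$, expansion of the product of $1+c_{i,j}f_{i,j}$, and the inversion count giving the sign $(-1)^{\sum_l(i_l-l)}(-1)^\sigma$), and the sign bookkeeping checks out.
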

\begin{proof}
Lemma follows from formula \eqref{action}.
\end{proof}

\begin{cor}
We have $X^a_{i_1,\dots,i_d}(\bc)=\det C$, where $C_{\al,\beta}$ is an $n\times n$ matrix 
defined by
\begin{equation}\label{C}
C_{\al,\be}=
\begin{cases}
c_{\al,i_\beta-1},\text{ if } \al\ne i_1,\dots,i_m, \beta>m,\\
0, \text{ if } \al\ne i_1,\dots,i_m, \beta\le m,\\
1,\ \text{ if }\al=i_1,\dots,i_m, \be=\al,\\
0,\ \text{ if } \al=i_1,\dots,i_m, \be\ne\al.
\end{cases}
\end{equation}
\end{cor}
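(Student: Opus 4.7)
The plan is to evaluate $\det C$ directly from \eqref{C} and match the outcome against the formula for $X^a_{i_1,\dots,i_d}(\bc)$ proved in Lemma \ref{xcij}, of which this corollary is essentially a determinantal repackaging. The matrix $C$ has a rigid block structure: the rows indexed by $\alpha\in\{i_1,\dots,i_m\}$ each carry a single entry $1$ (supported on the first $m$ columns so as to form a partial permutation matrix), while the remaining rows, labeled by the complement $\{j_1,\dots,j_{d-m}\}$ of $\{i_1,\dots,i_m\}$ in $\{1,\dots,d\}$, are supported only in columns $m+1,\dots,d$ and record the data $c_{j_u,\,i_\beta-1}$. This block form is exactly the setup for a Laplace expansion.

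Concretely, in $\det C=\sum_\sigma(-1)^\sigma\prod_\alpha C_{\alpha,\sigma(\alpha)}$ the rows $\alpha=i_l$ pin down the values of $\sigma$ on $\{i_1,\dots,i_m\}$, and in order that no ``data'' row be paired with a zero column the restriction of $\sigma$ to $\{j_1,\dots,j_{d-m}\}$ must be a bijection onto $\{m+1,\dots,d\}$. Parametrising this restriction by $\tau\in S_{d-m}$ via $\sigma(j_u)=m+\tau(u)$, the product of matrix entries contributes $\prod_u c_{j_u,\,i_{m+\tau(u)}-1}$, so after summation over $\tau$ the ``data'' rows yield exactly the $(d-m)\times(d-m)$ minor $\det\bigl(c_{j_u,\,i_{m+v}-1}\bigr)_{u,v=1}^{d-m}$, multiplied by a global sign coming from the partial permutation encoded by the ``placeholder'' rows.

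The one delicate step is this global sign. For the base case $\tau=\mathrm{id}$, the inverse permutation $\sigma^{-1}$ has one-line notation $(i_1,\dots,i_m,j_1,\dots,j_{d-m})$; since both $i_1<\dots<i_m$ and $j_1<\dots<j_{d-m}$ are increasing, the only inversions are the cross-block pairs $(i_l,j_u)$ with $i_l>j_u$, and for each $l$ the number of such $j_u$ equals $i_l-l$, so the total is $\sum_{l=1}^m(i_l-l)$. A general $\tau$ contributes an additional factor $\mathrm{sgn}(\tau)$, hence $\mathrm{sgn}(\sigma)=(-1)^{\sum_l(i_l-l)}\mathrm{sgn}(\tau)$.

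Assembling the pieces gives
\[
\det C \;=\; (-1)^{\sum_{l=1}^{m}(i_l-l)}\sum_{\tau\in S_{d-m}}\mathrm{sgn}(\tau)\,\prod_{u=1}^{d-m}c_{j_u,\,i_{m+\tau(u)}-1},
\]
which is exactly the right-hand side of Lemma \ref{xcij} (with $r=m$), and the corollary follows. The argument is routine linear algebra via Laplace expansion; the only bookkeeping worth flagging is the two-block inversion count in the sign, and this is entirely standard and presents no real obstacle.
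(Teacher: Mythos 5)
Your proof is correct and is exactly the paper's (implicit) argument: the corollary is stated as an immediate repackaging of Lemma \ref{xcij}, and expanding $\det C$ over permutations, with the forced values on the rows $i_1,\dots,i_m$ and the inversion count $\sum_{l}(i_l-l)$ for the sign, is precisely that verification. Note only that you (correctly) read $C$ as a $d\times d$ matrix whose row $i_l$ has its $1$ in column $l$ — the reading consistent with \eqref{Da} and with Lemma \ref{xcij} — rather than the literal ``$\be=\al$'' of the statement, which is a typo.
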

We note that matrices \eqref{C} and \eqref{Da} are identical up to the identification
of parameters.

\begin{example} 
Note that $X_{1,\dots,d}(\bc)=1$. 
Let $i\le d<j$. Then
\begin{equation}\label{Xc}
X_{1,\dots ,\hat{i}\dots, d,j}(\bc)=(-1)^{d-i} c_{i,j-1}.
\end{equation}
\end{example}

\begin{lem}\label{cell}
Fix a tuple $\bc$. Then the coordinates
$X^a_{i_1,\dots,i_{d_r}}(\bc)$, $r=1,\dots,s$ satisfy the relations from the ideal
$I^a$.  
\end{lem}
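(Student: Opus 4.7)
The plan is to derive the degenerate Plücker relations $R^{k;a}_{L,J}$ from the classical relations $R^k_{L,J}$ by a one-parameter scaling, and extract the leading-order behavior. This exploits Remark \ref{mindeg}: $R^{k;a}_{L,J}$ consists precisely of those monomials of $R^k_{L,J}$ whose total PBW-degree is minimal.

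First, introduce an auxiliary parameter $\ve\in\bC^*$ and consider, for each $d\in\{d_1,\dots,d_s\}$, the classical Plücker coordinates
\[
Y_{i_1,\dots,i_d}(\ve,\bc) = \text{coefficient of } v_{i_1,\dots,i_d} \text{ in } \exp\!\Bigl(\sum_{i\le j} \ve c_{i,j} f_{i,j}\Bigr) v_{\om_d},
\]
where the action is the \emph{classical} undeformed action on $V_{\om_d}=\Lambda^d\bC^n$. For each fixed $\ve\ne 0$, the tuple $\bigl(\exp(\sum \ve c_{i,j} f_{i,j})[v_{\om_{d_r}}]\bigr)_{r=1}^s$ is the Plücker image of a single point of the classical partial flag variety $\Fl(d_1,\dots,d_s)\hk \times_r Gr(d_r,n)$, hence the classical multi-homogeneous relations $R^k_{L,J}(Y(\ve,\bc))=0$ hold identically in $\ve$.

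Next, I would pin down the leading $\ve$-behavior: for $J$ with PBW-degree $s=\deg J$,
\[
Y_J(\ve,\bc) = \ve^s X^a_J(\bc) + O(\ve^{s+1}).
\]
This is a direct consequence of the PBW filtration: the order-$\ve^k$ term of the exponential lies in $F_k$, and since $v_J\in F_s\setminus F_{s-1}$, no term with $k<s$ can contribute to the $v_J$-coefficient. The coefficient of $\ve^s$ equals $\tfrac{1}{s!}$ times the $v_J$-coefficient of $\bigl(\sum c_{i,j}f_{i,j}\bigr)^s v_{\om_d}$; passing to the associated graded $V^a_{\om_d}$ (where the $f$'s commute and act by the explicit rule \eqref{action}) identifies this with $X^a_J(\bc)$ as defined via the $\g^a$-action. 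Lemma \ref{xcij} together with the matrix description \eqref{C} can be used to cross-check this on the nose.

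Finally, substituting into $R^k_{L,J}$ and combining with Remark \ref{mindeg}, each monomial $X_L X_J$ becomes $\ve^{\deg L+\deg J} X^a_L(\bc) X^a_J(\bc) + O(\ve^{\deg L+\deg J + 1})$, and the minimum of $\deg L+\deg J$ over all monomials in $R^k_{L,J}$ is achieved exactly by the monomials appearing in $R^{k;a}_{L,J}$. Calling this minimum $d_{\min}$,
\[
0 = R^k_{L,J}\bigl(Y(\ve,\bc)\bigr) = \ve^{d_{\min}} R^{k;a}_{L,J}\bigl(X^a(\bc)\bigr) + O(\ve^{d_{\min}+1})
\]
identically in $\ve$, and extracting the coefficient of $\ve^{d_{\min}}$ gives the desired vanishing. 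The only subtle step is the leading-order identification in the middle paragraph, where one must be careful that the $v_J$-coefficient in the classical action, modulo $F_{s-1}$, really does coincide with the degenerate coordinate $X^a_J(\bc)$; everything else is bookkeeping once the $\ve$-deformation is set up.
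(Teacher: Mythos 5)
Your proof is correct and follows essentially the same route as the paper: the paper also observes that the classical coordinates $X_{i_1,\dots,i_d}(\bc)$ satisfy the classical relations, that $X^a_J(\bc)$ is exactly the lowest-degree (in $\bc$) part of $X_J(\bc)$, of degree equal to the PBW-degree of $J$, and then invokes Remark \ref{mindeg} to extract the minimal-degree component of $R^k_{L,J}(X(\bc))=0$. Your $\ve$-scaling is just an explicit device for this lowest-degree extraction, and your middle paragraph supplies the same identification the paper asserts, with a bit more justification via the filtration $F_s$.
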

\begin{proof}
Let $X_{i_1,\dots,i_d}(\bc)$ be the classical analogues of
$X^a_{i_1,\dots,i_{d}}(\bc)$, i.e.
\[
\exp\left(\sum_{1\le i\le j<n} c_{i,j}f_{i,j}\right) v_{\om_d}=
\sum_{1\le i_1<\dots <i_d\le n} X_{i_1,\dots,i_{d}}(\bc) v_{i_1,\dots,i_d} \in V_{\om_d}.
\] 
For any tuple $\bc$ the coordinates $X_{i_1,\dots,i_d}(\bc)$, $d=d_1,\dots,d_s$
satisfy relations from $I$.
We note that the polynomials $X^a_{i_1,\dots,i_{d_r}}(\bc)$ can be obtained from
$X_{i_1,\dots,i_{d_r}}(\bc)$ simply by taking
the lowest degree (in all variables $c_{i,j}$) terms. This lowest
degree coincides with the PBW-degree of the tuple $i_1,\dots,i_d$.
Since the terms in $R^{k;a}_{L,J}$ are exactly the lowest
degree terms among those in  $R^{k;a}_{L,J}$, the coordinates
$X^a_{i_1,\dots,i_{d_r}}(\bc)$ do satisfy the relations $R^{k;a}_{L,J}$.
\end{proof}

Now consider the set of all zeros $X(I^a)\hk \times_{r=1}^s \bP(V^a_{\om_{d_r}})$
of the ideal $I^a$.
\begin{lem}
Let $W\subset \times_{r=1}^s \bP(V_{\om_{d_r}})$ be an affine space defined by
$X_{1,\dots,d_r}\ne 0$ for all $r=1,\dots,s$. Then
$X(I^a)\cap W=U.$
\end{lem}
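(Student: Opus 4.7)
The forward inclusion $U\subseteq X(I^a)\cap W$ is the easy half: for any tuple $\bc$, Lemma \ref{cell} says that $\{X^a_{i_1,\dots,i_d}(\bc)\}$ satisfies every generator of $I^a$, and the normalization $X^a_{1,\dots,d_r}(\bc)=1\ne 0$ for each $r$ places the point in $W$.

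For the reverse inclusion, I would take $p\in X(I^a)\cap W$, rescale in each projective factor so that $X^a_{1,\dots,d_r}(p)=1$ for every $r=1,\dots,s$, and produce a tuple $\bc=(c_{i,j})$ with
\[
\exp\!\Bigl(\sum_{1\le i\le j\le n-1} c_{i,j}f_{i,j}\Bigr)\bigl([v_{\om_{d_1}}]\times\cdots\times [v_{\om_{d_s}}]\bigr)=p.
\]
Motivated by formula \eqref{Xc}, for every pair $(i,j)$ such that $i\le d_r\le j$ for some $r$ I set $c_{i,j}=(-1)^{d_r-i}X^a_{1,\dots,\hat i,\dots,d_r,j+1}(p)$ using any admissible $d_r$; the remaining $c_{i,j}$ act trivially on each $V^a_{\om_{d_r}}$ and may be set to zero. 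Independence of the choice of $d_r$ is the first thing to verify, and I expect it to come from a generator $R^{k;a}_{L,J}\in I^a$ that, after using $X^a_{1,\dots,d_r}(p)=X^a_{1,\dots,d_{r'}}(p)=1$, equates the two candidate values up to a consistent sign.

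The core task is then showing that $X^a_J(p)$ coincides with the value produced by $\bc$ for \emph{every} tuple $J$ of some length $d_r$, not just for the linear coordinates used to define $\bc$. The plan is induction on the PBW-degree $\deg J$ from \eqref{PBWdegree}. The base case $\deg J=0$ forces $J=(1,\dots,d_r)$ and holds by the normalization. For the inductive step, one chooses a generator $R^{k;a}_{(1,\dots,d_r),J'}$ whose leading monomial, after substituting $X^a_{1,\dots,d_r}=1$, is exactly $X^a_J$, and whose remaining monomials involve coordinates of strictly smaller PBW-degree; both $p$ and the candidate point satisfy this relation, so the inductive hypothesis forces $X^a_J(p)$ to match. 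This is the same reduction mechanism used in the proof of Lemma \ref{XT}, specialized to the affine chart $W$.

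The main obstacle is making the inductive step clean. By Remark \ref{mindeg}, all terms in a given generator $R^{k;a}_{L,J}$ share the same sum of PBW-degrees, so the needed strict drop in PBW-degree on $J$ comes only after one factor absorbs a normalized $X^a_{1,\dots,d_r}=1$; one must exhibit a specific relation for each target $J$ achieving this, which is a combinatorial check analogous to the one carried out in Lemma \ref{XT}. Once the induction is established, one has $p\in U$, completing the proof.
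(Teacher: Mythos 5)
Your proposal is correct and follows essentially the same route as the paper's proof: the easy inclusion via Lemma \ref{cell}, the recovery of $c_{i,j}$ from the degree-one coordinates using \eqref{Xc} (with consistency over different admissible $d_r$ enforced by relations in $I^a$, exactly as in \eqref{c=c}--\eqref{deg1}), and uniqueness of a point of $X(I^a)\cap W$ with prescribed degree-one coordinates by induction on PBW-degree, where the needed relation is precisely $R^{1;a}_{(1,\dots,d),J}$ with $J$ ordered so that its first entry exceeds $d$, absorbing the normalized $X^a_{1,\dots,d}=1$. No essential difference from the paper's argument.
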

\begin{proof}
Let $P_{i_1,\dots,i_{d_r}}$, $r=1,\dots,s$ be the coordinates of a point $P\in X(I^a)\cap W$.   
We note that  for all $l\le d_r < d_m <j$ the relation
$R^{1;a}_{(1,\dots, d_m),(j,1,\dots, \hat{l}\dots, d_r)}$ give
\begin{equation}\label{c=c}
P^a_{1,\dots, d_m}P^a_{j,1,\dots, \hat{l}\dots, d_r} = 
P^a_{j,1,\dots, \hat{l}\dots, d_m}P^a_{1,\dots, d_r}. 
\end{equation}
Assuming $P^a_{1,\dots, d}=1$, relation \eqref{c=c} 
reads as
\begin{equation}\label{deg1}
P^a_{1,\dots, \hat{l}\dots, d_m,j} = 
(-1)^{d_r+d_m}P^a_{1,\dots, \hat{l}\dots, d_r,j} 
\end{equation}
(which agrees with formula \eqref{Xc}). 
We set 
\[
c_{i,j-1}=(-1)^{d_m-i} P_{1,\dots,\hat{i}\dots,d_m,j},
\]
if there exists $m$ such that $i\le d_m<j$ and set $c_{i,j}=0$ otherwise.
Equation \eqref{deg1} guarantees that such tuple $c_{i,j}$ exists. 
Our goal is to show that
\begin{equation}\label{coin}
\exp\left(\sum_{1\le i\le j<n} c_{i,j}f_{i,j}\right) 
\left([v^a_{\om_{d_1}}]\times\dots\times [v^a_{\om_{d_s}}]\right)= P.
\end{equation}
The right and left hand sides belong to the affine space with the coordinates
$X^a_{i_1,\dots,i_r}$ (we assume $X^a_{1,\dots,d_r}=1$). We have defined
$c_{i,j}$ in such a way that the coordinates $X^a_{1,\dots\hat{i},\dots,d_r,j}$
of the left and right hand sides of \eqref{coin} coincide. This implies \eqref{coin},
since there is at most one point in $X(I^a)\cap W$ with the prescribed values of
$X^a_{1,\dots\hat{i},\dots,d_r,j}$. In fact, let $J=(j_1,\dots,j_d)$ be a sequence such that
$\deg J>1$. Let us order $J$ in such a way that $j_1>d$. Then the relation 
$R^{1;a}_{(1,\dots,d),J}$ allows to rewrite $X^a_J$ ($X^a_JX_{1,\dots,d}$ to be precise)
as a polynomial in $X_L^a$ with $\deg L<\deg J$. Now it suffices to note that
the sequences $(1,\dots\hat{i},\dots,d,j)$ are exactly the degree one sequences.   
\end{proof}

For a subset $R\subset \times_{r=1}^s \bP(V^a_{\om_{d_r}})$ let $I(R)$ be the ideal
of multi-homogeneous polynomials vanishing on $R$.
We now prove the main theorem of this section.
\begin{thm}
$I(U)=I^a$. Equivalently, $I(\Fl^a)=I^a$.
\end{thm}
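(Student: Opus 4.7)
The plan is to reduce the statement to the set-theoretic equality $\Fl^a = X(I^a)$ inside $\times_{r=1}^s \bP(V^a_{\om_{d_r}})$, after which Hilbert's Nullstellensatz combined with primality of $I^a$ (Corollary \ref{prime}) immediately yields $I(\Fl^a) = I(X(I^a)) = \sqrt{I^a} = I^a$.

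The inclusion $\Fl^a \subseteq X(I^a)$ is immediate from Lemma \ref{cell}: each generator $R^{k;a}_{L,J}$ vanishes on the dense orbit $U$, hence on its closure $\Fl^a = \ov{U}$. For the reverse inclusion $X(I^a) \subseteq \Fl^a$, I would invoke the preceding lemma, which asserts $W \cap X(I^a) = U$, where $W$ is the open affine chart cut out by $X^a_{1,\dots,d_r} \ne 0$ for $r=1,\dots,s$. Since $W$ is Zariski open in the ambient product of projective spaces, $U = W \cap X(I^a)$ is a Zariski open subset of $X(I^a)$; it is nonempty because the basepoint $[v_{\om_{d_1}}] \times \dots \times [v_{\om_{d_s}}]$ (corresponding to $\bc=0$) lies in $U$. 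By Corollary \ref{prime}, $I^a$ is prime, so $X(I^a)$ is an irreducible closed subvariety, and every nonempty open subset of an irreducible variety is dense. Therefore $\ov{U} = X(I^a)$, and since $\Fl^a = \ov{U}$ by definition, this gives $\Fl^a = X(I^a)$.

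I do not foresee a serious obstacle: all the substantive combinatorial and algebraic work has been carried out already, in Lemma \ref{cell}, in the affine-chart identification $W \cap X(I^a) = U$, and in the primality result Corollary \ref{prime}. The present theorem is essentially the geometric repackaging of those three statements, with the only mildly nontrivial ingredient being the observation that irreducibility of $X(I^a)$ — a consequence of primality — forces the open chart $U$ to be dense.
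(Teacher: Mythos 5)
Your argument is correct, but it is a genuinely different route from the paper's. The paper proves the nontrivial inclusion $I(U)\subseteq I^a$ by direct algebraic elimination: given a multi-homogeneous $F$ vanishing on $U$, it multiplies by powers of the coordinates $X^a_{1,\dots,d_r}$, uses the relations $R^{1;a}_{(1,\dots,d),J}$ to rewrite everything in terms of PBW-degree-one coordinates, treats the single-Grassmannian case via the classical ideal $I$ (where $R^{k;a}_{L,J}=R^k_{L,J}$), inducts on the number $s$ of groups of variables, and invokes primality of $I^a$ only at the very end to cancel the auxiliary factors $(X^a_{1,\dots,d_1})^N(X^a_{1,\dots,d_2})^{M_1}$. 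You instead package the already-proved ingredients geometrically: Lemma \ref{cell} gives $\ov U\subseteq X(I^a)$, the chart lemma $X(I^a)\cap W=U$ exhibits $U$ as a nonempty open subset of $X(I^a)$, primality (Corollary \ref{prime}) gives irreducibility of $X(I^a)$ and hence $\ov U=X(I^a)$, and the Nullstellensatz plus $\sqrt{I^a}=I^a$ finishes. This is shorter and more conceptual, and it makes explicit use of the chart lemma, which the paper's proof of the theorem does not directly exploit; the paper's computation, on the other hand, stays entirely inside the polynomial ring and needs no projective Nullstellensatz. One point you should spell out: in a product of projective spaces the naive identity $I(X(J))=\sqrt{J}$ can fail for multi-homogeneous $J$ because of saturation with respect to the irrelevant ideals. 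Here this is harmless precisely because $I^a$ is prime and $X(I^a)\supseteq U\neq\emptyset$ forces $I^a$ not to contain any of the irrelevant ideals (so $I^a$ is saturated); equivalently, one can argue on the affine multi-cone, which is irreducible with vanishing ideal $I^a$, and note that a multi-homogeneous $F$ vanishing on $X(I^a)$ vanishes on a dense open subset of the cone, hence lies in $I^a$. With that sentence added, your proof is complete and no circularity arises, since both cited lemmas and Corollary \ref{prime} are established before the theorem.
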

\begin{proof}
We need to prove that if a multi-homogeneous polynomial $F$ in variables 
$X^a_{i_1\dots i_d}$, $d=d_1,\dots,d_s$ vanishes on $U$, then $F\in I^a$. 
For a $d$-tuple $j_1,\dots,j_d$ consider the relation
$R^{1;a}_{(1,\dots,d);(j_1,\dots,j_d)}\in I^a$. This relation is equal to
\[
X^a_{1,\dots,d} X^a_{j_1,\dots,j_d} - \sum X^a_{L} X^a_{J'},
\]
where for all terms $X^a_{L} X^a_{J'}$ we have
\[
\deg X^a_L=1,\  \deg X^a_{J'}=  \deg (j_1,\dots,j_d) - 1.
\]
(Recall the PBW degree $\deg J=\#\{l:\ j_l>d\}$).
Therefore, 
the product $(X^a_{1,\dots,d})^{\deg J} X^a_J$ can be expressed modulo the ideal $I^a$ 
as a homogeneous  polynomial in coordinates $X^a_{1,\dots \hat{l}\dots, d,j}$ ($l\le d<j$).

We show that $F\in I^a$ in the following way: 
first assume that $F$ depends only on the variables $X^a_{i_1,\dots,i_d}$
with a fixed $d$. Then $F$ vanishes on a dense open cell of the Grassmanian 
$Gr(d,n)$ and hence
the polynomial $F$ belongs to $I$. Since $R^{k;a}_{L,J}=R^{k}_{L,J}$ (up to a superscript $a$)
if lengths of $L$ and $J$ coincide, we obtain $F\in I^a$. 
Now consider the general case, i.e. assume that $F$ depends on
at least two groups of coordinates (with different $d$). Let
$r$ be the maximal number such that for all $m<r$ $F$ does not depend
on the variables $X^a_{i_1,\dots, i_{d_m}}$. 
We then show that
there exists a polynomial $G$ such that $G-F\in I^a$ and
$G$ is independent of the variables $X^a_{i_1,\dots, i_{d_m}}$ with $m\le r$. 

Without loss of generality we assume that $r=1$. First, there exists 
a number $N$ such that the polynomial $\bar F=(X^a_{1,\dots,d_1})^N F$ can be expressed
modulo $I^a$ as a polynomial independent of $X^a_{j_1,\dots,j_d}$ whenever
$\deg (j_1,\dots,j_d)>1$. Note also that $\bar F$ still vanishes on $U$.
Recall the expression \eqref{Xc} for  the degree one coordinates:
\[
X^a_{1,\dots\hat{l}\dots,d_1,m}({\bf c})=(-1)^{d_1-l} c_{l,m-1}.
\]
From this formula and since $\bar F$ vanishes on the whole $U$, $\bar F$ does not depend 
on $X^a_{1,\dots\hat{l}\dots,d_1,m}$ with $m\le d_2$ (because numbers $c_{l,m-1}$
with $m\le d_2$ can come nowhere, but from $X^a_{1,\dots\hat{l}\dots,d_1,m}({\bf c}))$.
For the rest $X^a_{1,\dots\hat{l}\dots,d_1,m}$ we have the following relation in $I^a$:
\[
X^a_{1,\dots,d_2}X^a_{1,\dots\hat{l}\dots,d_1,m} - 
(-1)^{d_1 + d_2} X^a_{1,\dots\hat{l},\dots,d_2,m}X^a_{1,\dots,d_1}=0.
\]
Therefore, there exist  numbers $M_1$ and $M_2$ and a polynomial $G$ such that 
\[
(X^a_{1,\dots,d_2})^{M_1}\bar F - (X^a_{1,\dots,d_1})^{M_2} G\in I^a
\] 
and $G$ does not depend on variables $X^a_{i_1,\dots,i_{d_1}}$.
Note that since $\bar F$ vanishes on $U$ and $X^a_{1,\dots,d}$ equals to $1$ on $U$,
the polynomial $G$ vanishes on $U$ as well. 
By induction on $s$  we can assume $G\in I^a$. Therefore,
\[
(X^a_{1,\dots,d_2})^{M_1}\bar F=(X^a_{1,\dots,d_2})^{M_1}(X^a_{1,\dots,d_1})^N F\in I^a.
\] 
Since $I^a$ is prime (see Corollary \ref{prime}), we arrive at $F\in I^a$.
\end{proof}

\subsection{Flatness}
We close with the proof of the flatness of the degeneration $\Fl\to \Fl^a$.
Let $t$ be a variable. We define an algebra $Q^t$ over the ring $\bC[t]$ as a
quotient of the polynomial ring $\bC[t][X_{i_1,\dots,i_d}]$, $d=d_1,\dots,d_s$ by the ideal
$I^t$ generated by quadratic relations $R^{k;t}_{L,J}$. These relations are $t$-deformations
of the relations $R^{k}_{L,J}$. Namely, let $R^{k}_{L,J}=\sum_i X_{L^{(i)}}X_{J^{(i)}}$.
Then
\[
R^{k;t}_{L,J}=t^{-\min_i (\deg L^{(i)}+\deg J^{(i)})}\sum_i t^{\deg L^{(i)}+\deg J^{(i)}}
X_{L^{(i)}}X_{J^{(i)}}.
\] 
The following lemma describes the fibers of $Q^t$.
\begin{lem}
$Q^t/(t)\simeq Q^a$, $Q^t/(t-u)\simeq Q$ for $u\ne 0$.
\end{lem}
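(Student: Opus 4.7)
The proof should split into two parts corresponding to the two fibers. The key point is that the definition of $R^{k;t}_{L,J}$ is designed precisely so that at $t=0$ the terms of non-minimal PBW bidegree are killed by the positive powers of $t$, while for $t\neq 0$ the construction can be undone by a $t$-dependent rescaling of the variables $X_J$.

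For the first isomorphism $Q^t/(t)\simeq Q^a$, the plan is to compute the image of the generator $R^{k;t}_{L,J}$ in $\bC[X_{i_1,\dots,i_d}]=\bC[t][X_{i_1,\dots,i_d}]/(t)$. Writing $m=\min_i(\deg L^{(i)}+\deg J^{(i)})$, we have
\[
R^{k;t}_{L,J}=\sum_i t^{\deg L^{(i)}+\deg J^{(i)}-m}X_{L^{(i)}}X_{J^{(i)}},
\]
and modulo $t$ only the summands with $\deg L^{(i)}+\deg J^{(i)}=m$ survive. By Remark \ref{mindeg}, the collection of these minimal-bidegree summands is exactly $R^{k;a}_{L,J}$ (after renaming $X_J$ to $X_J^a$). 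Hence the reduction modulo $t$ of $I^t$ equals $I^a$, which yields the first isomorphism.

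For the second isomorphism $Q^t/(t-u)\simeq Q$ with $u\neq 0$, the plan is to exhibit an explicit rescaling isomorphism. Define
\[
\varphi_u\colon \bC[X_{i_1,\dots,i_d}]\to \bC[X_{i_1,\dots,i_d}],\qquad \varphi_u(X_J)=u^{-\deg J}X_J.
\]
Since for any monomial $X_{L^{(i)}}X_{J^{(i)}}$ appearing in $R^k_{L,J}$ the bidegree $\deg L^{(i)}+\deg J^{(i)}$ does not depend on $i$ in the obvious naive sense — but in fact here one must be slightly careful, because different terms of $R^k_{L,J}$ do have different PBW-bidegrees, and that is exactly the point of the degeneration. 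Precisely, substituting $X_J=u^{\deg J}Y_J$ into $R^{k;t}_{L,J}$ specialized at $t=u$ gives
\[
R^{k;t}_{L,J}\bigr|_{t=u,\,X=u^{\deg}Y}=u^{-m}\sum_i u^{\deg L^{(i)}+\deg J^{(i)}}u^{\deg L^{(i)}+\deg J^{(i)}}Y_{L^{(i)}}Y_{J^{(i)}}\cdot u^{-(\deg L^{(i)}+\deg J^{(i)})},
\]
so after collecting the powers of $u$ one obtains $u^{-m+C}R^k_{L,J}(Y)$ for a common exponent $C$; hence up to a nonzero scalar, the $t=u$ specialization of each generator $R^{k;t}_{L,J}$ coincides with $R^k_{L,J}$ under this rescaling. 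Consequently the rescaling induces an isomorphism of the quotients $Q^t/(t-u)\xrightarrow{\sim}Q$.

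The main (and only) delicate point will be to bookkeep the powers of $u$ in the rescaling so that every term of $R^k_{L,J}$ is recovered with the same scalar, which is ultimately just the observation that $\varphi_u$ is homogeneous with respect to the PBW-grading while the definition of $R^{k;t}_{L,J}$ is designed to have a uniform overall $t$-weight once the rescaling is applied. No further geometric input is needed; the statement is purely algebraic and follows from inspecting the generators of $I^t$.
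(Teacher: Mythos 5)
Your overall strategy is exactly what the paper intends (its own proof is just ``Straightforward''): reduce mod $t$ and observe that only the minimal PBW-bidegree terms survive, giving $I^a$ by Remark \ref{mindeg}; and for $t=u\neq 0$ use a rescaling automorphism homogeneous for the PBW-degree. The $t=0$ half of your argument is correct as written: $Q^t/(t)=\bC[t][X]/(I^t+(t))$ and each generator $R^{k;t}_{L,J}$ reduces mod $t$ to $R^{k;a}_{L,J}$, so the quotient is $\bC[X]/I^a=Q^a$.

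The $t=u$ half, however, is garbled in the verification, even though the map $\varphi_u(X_J)=u^{-\deg J}X_J$ you defined at the outset is the right one. Substituting $X_J=u^{\deg J}Y_J$, as your display says, gives
\[
R^{k;t}_{L,J}\bigl|_{t=u}\;=\;u^{-m}\sum_i u^{2(\deg L^{(i)}+\deg J^{(i)})}\,Y_{L^{(i)}}Y_{J^{(i)}},
\]
and the exponents of $u$ still depend on $i$, so there is no ``common exponent $C$'' and the displayed expression (which also carries a spurious extra factor $u^{-(\deg L^{(i)}+\deg J^{(i)})}$) does not reduce to a scalar multiple of $R^k_{L,J}$. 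The correct bookkeeping is to apply the automorphism you actually defined: since
\[
R^{k;t}_{L,J}\bigl|_{t=u}\;=\;u^{-m}\sum_i u^{\deg L^{(i)}+\deg J^{(i)}}\,X_{L^{(i)}}X_{J^{(i)}},
\]
applying $\varphi_u(X_J)=u^{-\deg J}X_J$ multiplies the $i$-th term by $u^{-\deg L^{(i)}-\deg J^{(i)}}$, so $\varphi_u\bigl(R^{k;t}_{L,J}|_{t=u}\bigr)=u^{-m}R^k_{L,J}$ uniformly. Hence the automorphism $\varphi_u$ of $\bC[X_{i_1,\dots,i_d}]$ carries the specialized ideal $I^t|_{t=u}$ onto $I$ (generators go to nonzero scalar multiples of generators), inducing $Q^t/(t-u)\simeq Q$. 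With this one-line correction your proof is complete; the slip is arithmetic, not conceptual.
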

\begin{proof}
Straightforward.
\end{proof}

\begin{prop}
$Q^t$ is $\bC[t]$ free.
\end{prop}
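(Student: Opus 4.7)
The strategy is to apply the structure theorem for finitely generated modules over the PID $\bC[t]$, after decomposing $Q^t$ by multi-degree. The relations $R^{k;t}_{L,J}$ are multi-homogeneous in the variables $X_J$ (the $t$-weighting preserves multi-degree), so
\[ Q^t = \bigoplus_{\la\in P^+(d_1,\dots,d_s)} Q^t_\la, \]
and it suffices to prove that each component $Q^t_\la$ is $\bC[t]$-free. Each $Q^t_\la$ is finitely generated over $\bC[t]$: it is a quotient of $\bC[t]\T_\bC \bC[X_J]_\la$, which is $\bC[t]$-free of finite rank because the multi-degree $\la$ piece of the polynomial ring in the variables $X_J$ is finite dimensional over $\bC$.

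I would then read off all closed fibers of $Q^t_\la$ from the preceding lemma. The fiber at $t=0$ is $Q^a_\la$, which by the basis theorem is dual to $V_\la^a$ and thus has $\bC$-dimension $\dim V_\la$. The fiber at any $t=u\ne 0$ is $Q_\la$, also of $\bC$-dimension $\dim V_\la$ by Proposition \ref{nb}. Hence the fiber dimension of $Q^t_\la$ is constantly $\dim V_\la$ over every closed point of $\mathrm{Spec}\,\bC[t]$. The structure theorem gives
\[ Q^t_\la\simeq \bC[t]^r\oplus \bigoplus_i \bC[t]/(t-a_i)^{n_i}, \]
and the fiber at $u$ has dimension $r+\#\{i:a_i=u\}$. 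Constancy of this in $u\in\bC$ forces $r=\dim V_\la$ and the torsion part to be empty, so $Q^t_\la$ is free of rank $\dim V_\la$. Summing over $\la$ yields the claim.

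The real content is packaged into the preceding lemma, so the main technical point is simply the two fiber identifications $Q^t/(t)\simeq Q^a$ and $Q^t/(t-u)\simeq Q$ as multi-graded algebras; given those, the PID argument above runs mechanically. If a more constructive argument is preferred, the same conclusion can be reached by lifting $\{X^a_T: T\in ST^a_\la\}$ to elements $X^t_T\in Q^t_\la$ defined by the same monomials: the straightening of Proposition \ref{nb} adapts verbatim over $\bC[t]$ because $R^{k;t}_{L,J}=\tilde R^{k;a}_{L,J}+t\cdot(\text{higher PBW-degree terms})$, with termination from the same boundedness of total column PBW-degree on a fixed shape, while $\bC[t]$-linear independence follows by specializing at any $u\in\bC$ to a $\bC$-basis of the corresponding fiber.
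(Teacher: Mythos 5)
Your proposal is correct, but it proves the proposition by a genuinely different route than the paper. The paper's proof is constructive: it shows that the elements $X_T$, $T\in ST^a_\la$, $\la\in P^+(d_1,\dots,d_s)$, form a $\bC[t]$-basis of $Q^t$ --- spanning by running the straightening of Proposition \ref{nb} over $\bC[t]$ (this is exactly your parenthetical second sketch, using $R^{k;t}_{L,J}=\tilde R^{k;a}_{L,J}+t\cdot(\text{higher PBW-degree terms})$ and boundedness of the PBW-degree for a fixed shape), and $\bC[t]$-linear independence by specializing $t$ at a value $u$ chosen so that all nonzero coefficient polynomials survive, then invoking Proposition \ref{nb} in the fiber. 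Your primary argument instead decomposes $Q^t$ into multi-homogeneous pieces $Q^t_\la$ (legitimate, since each $R^{k;t}_{L,J}$ is multi-homogeneous and the $t$-weights do not disturb the $X$-grading), notes each piece is finitely generated over $\bC[t]$, and deduces freeness from the structure theorem over the PID $\bC[t]$ together with constancy of the fiber dimensions $\dim Q^a_\la=\dim Q_\la=\dim V_\la$, both established earlier; the identification of the fibers of the graded pieces with $Q^a_\la$ and $Q_\la$ is fine because the isomorphisms of the preceding lemma respect the multi-grading. What your soft argument buys is that no straightening needs to be redone over $\bC[t]$; what it gives up is the sharper conclusion implicit in the paper's proof, namely that the semistandard monomials $X_T$ themselves form a $\bC[t]$-basis of $Q^t$, i.e.\ the PBW-semistandard basis deforms across the whole family. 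One small caution about your closing sentence: $\bC[t]$-linear independence does not follow from specializing at literally ``any'' single $u\in\bC$; one must either choose $u$ avoiding the zeros of the finitely many nonzero coefficients (as the paper does), or argue as in your main proof via spanning plus constant fiber dimension. The fix is immediate, so this is a matter of phrasing rather than a gap.
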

\begin{proof}
We prove that the elements $X_T$, $T\in ST^a_\la$, $\la\in P^+(d_1,\dots,d_s)$
form a $\bC[t]$ basis of $Q^t$. First, let us prove that the elements $X_T$ are linearly independent.
Assume $$\sum_T m_T X_T=\sum_i R^{k_i;t}_{L^{(i)},J^{(i)}}p_i(t),\ m_T, p_i\in\bC[t].$$
There exists a number $u\in\bC$ such that $m_T(u)\ne 0$ if $m_T\ne 0$. Now 
Proposition \ref{nb} gives linear independence. The proof of the spanning property
is analogous to the proof of Proposition \ref{nb}.  
\end{proof}

\section*{Acknowledgments}
We are grateful to I.Arzhantsev, M.Finkelberg and A.Kuznetsov for useful discussions and
to I.Arzhantsev for useful remarks on the earlier version of the paper.
This work was partially supported
by the Russian President Grant MK-281.2009.1,  RFBR Grants 09-01-00058,
07-02-00799 and NSh-3472.2008.2, by Pierre Deligne fund
based on his 2004 Balzan prize in mathematics and by EADS foundation chair in mathematics.

\end{document}